\documentclass{imsart}%{article}

\RequirePackage{amsthm}
\RequirePackage{enumerate}
\RequirePackage{color}
\RequirePackage{placeins}
\RequirePackage{pdfsync}
\RequirePackage{mathbbol}
\RequirePackage{natbib}
\RequirePackage{amsmath}
\RequirePackage{amsthm}
\RequirePackage[colorlinks]{hyperref}
\RequirePackage{xcolor}
\RequirePackage{array}
\RequirePackage{booktabs}
\RequirePackage{graphicx}
\definecolor{halfgray}{gray}{0.55} % chapter numbers will be semi transparent .5 .55 .6 .0
\definecolor{webgreen}{rgb}{0,.5,0}
\definecolor{webbrown}{rgb}{.6,0,0}
\definecolor{RoyalBlue}{rgb}{0,0.08,0.45}
\RequirePackage{hypernat}
\RequirePackage{multirow}

\hypersetup{%
    %draft, % = no hyperlinking at all (useful in b/w printouts)
   colorlinks=true, linktocpage=true, pdfstartpage=3, pdfstartview=FitV,%
   % uncomment the following line if you want to have black links (e.g., for printing)
   %colorlinks=false, linktocpage=false, pdfborder={0 0 0}, pdfstartpage=3, pdfstartview=FitV,% 
   breaklinks=true, pdfpagemode=UseNone, pageanchor=true, pdfpagemode=UseOutlines,%
   plainpages=false, bookmarksnumbered, bookmarksopen=true, bookmarksopenlevel=1,%
   hypertexnames=true, pdfhighlight=/O,%hyperfootnotes=true,%nesting=true,%frenchlinks,%
   urlcolor=webbrown, linkcolor=RoyalBlue, citecolor=webgreen, %pagecolor=RoyalBlue,%
   %urlcolor=Black, linkcolor=Black, citecolor=Black, %pagecolor=Black,%
   pdftitle={Tail index estimation, concentration and adaptivity},%
   pdfauthor={\textcopyright\ MT SB,  Paris-Diderot, Math\'ematiques},%
   pdfsubject={},%
   pdfkeywords={},%
   pdfcreator={pdfLaTeX},%
   pdfproducer={LaTeX}%
}   

%%%%%%%%%%%%%%%%%%%%%%%%%%%%%% Textclass specific LaTeX commands.
 % for a filled box

 % default to closed

%%\renewcommand{\theequation}{\thesection.\arabic{equation}}
\numberwithin{equation}{section}
\newtheorem{thm}[equation]{Theorem}
\newtheorem{cor}[equation]{Corollary}
\newtheorem{lem}[equation]{Lemma}
\newtheorem{prp}[equation]{Proposition}

\theoremstyle{remark}

\newtheorem{dfn}[equation]{Definition}
\newtheorem{rem}[equation]{Remark}

%%%%%%%%%%%%%%%%%%%%%%%%%%%%%% User specified LaTeX commands.

\newcommand{\mathd}{\mathrm{d}}
\newcommand{\mathe}{\mathrm{e}}

\newcommand{\EXP}{\ensuremath{\mathbb{E}}}

\newcommand{\var}{\operatorname{Var}}
\newcommand{\esp}{\mathbb{E}}
\newcommand{\pt}{\enspace .}
\newcommand{\ent}{\operatorname{Ent}}
\newcommand{\dis}{\stackrel{d}{=}}

\newcommand{\cov}{\operatorname{Cov}}
\newcommand{\arsinh}{\operatorname{arsinh}}

\arxiv{math.ST/1503.05077}
\begin{document}

\begin{frontmatter}
\title{Tail index estimation, concentration and adaptivity}
\runtitle{Adaptive Hill estimation}
\thankstext{}{Research was partially supported by the ANR14-CE20-0006-01 project AMERISKA network}

\begin{aug}
  \author{\fnms{St\'ephane}  \snm{Boucheron}\ead[label=e1]{stephane.boucheron@univ-paris-diderot.fr}},

  \address{DMA -- CNRS UMR 8553, ENS Ulm\\ LPMA -- CNRS UMR 7599, Sorbonne Paris Cit\'e, Universit\'e Paris-Diderot\\
\printead{e1}}

  \and
  \author{\fnms{Maud} \snm{Thomas}\ead[label=e2]{maud.thomas@univ-paris-diderot.fr}}
%  \ead[label=u1,url]{http://www.foo.com}}

\address{LPMA -- CNRS UMR 7599, Sorbonne Paris Cit\'e, Universit\'e Paris-Diderot\\ 
          \printead{e2}}

\runauthor{S. Boucheron and M. Thomas}

 \end{aug}

\begin{abstract}
This paper presents an adaptive version of the Hill estimator based on Lespki's model selection method. This simple data-driven index selection method is shown to satisfy an oracle inequality and is checked to achieve the lower bound recently derived by \citeauthor{carpentierkim2014}. In order to establish the oracle inequality, we derive non-asymptotic variance bounds and concentration inequalities for Hill estimators. These concentration inequalities are derived  from Talagrand's concentration inequality for smooth functions of independent exponentially distributed random variables combined with three tools of Extreme Value Theory: the quantile transform, Karamata's representation of slowly varying functions, and R\'enyi's characterisation for the order statistics of exponential samples. The performance of this computationally and conceptually simple method is illustrated using Monte-Carlo simulations. 
\end{abstract}

\begin{keyword}[class=MSC]
\kwd{60E15}
\kwd{60G70}
\kwd{62G30}
\kwd{62G32}
%\kwd[; secondary ]{60K35}
\end{keyword}

\begin{keyword}
\kwd{Hill estimator}
\kwd{adaptivity}
\kwd{Lepski's method}
\kwd{concentration inequalities}
\kwd{order statistics}
\end{keyword}

\end{frontmatter}

%\tableofcontents 

\section{Introduction}
\label{sec:introduction}

The basic questions faced by Extreme Value Analysis consist in estimating the probability of exceeding a threshold that is larger than the sample maximum and estimating a quantile of an order that is larger than 1 minus the reciprocal of the sample size. In words, they consist in making inferences on regions that lie outside the support of the empirical distribution. In order to face these challenges in a sensible framework,  
Extreme Value Theory (\textsf{EVT}) assumes that the sampling distribution $F$ satisfies a regularity condition. Indeed, in heavy-tail analysis, the tail function $\overline{F}=1-F$ is supposed to be regularly varying that is, $\lim_{\tau \to \infty}\overline{F}(\tau x)/\overline{F}(\tau)$ exists for all $x>0$. This amounts to assume the  existence of  some $\gamma>0$ such that the limit is $x^{-1/\gamma}$ for all $x$. In other words, if we define the \emph{excess distribution above the threshold $\tau$} by its survival function: $x \mapsto \overline{F}_\tau(x)=  \overline{F}(x)/\overline{F}(\tau)$ for $x\geq \tau$, then $\overline{F}$ is regularly varying if and only if $F_\tau$ converges weakly towards a Pareto distribution. The sampling distribution $F$ is then said to belong to the \emph{max-domain of attraction} of a Fr\'echet distribution with index $\gamma >0$ (abbreviated in $F \in\textsf{MDA}(\gamma)$) and $\gamma$ is called the \emph{extreme value index}.

The main impediment to large exceedance and large quantile estimation problems alluded above turns out to be  the estimation of the extreme value index. Since the inception of Extreme Value Analysis, many estimators have been defined, analysed and implemented into software. \citet{hill1975} introduced a simple, yet remarkable, collection of estimators: for $k< n$,
\begin{displaymath}
  \widehat{\gamma}(k) = \frac{1}{k} \sum_{i=1}^k \ln
  \frac{X_{(i)}}{X_{(k+1)}}  =  \frac{1}{k} \sum_{i=1}^k  i \ln 
  \frac{X_{(i)}}{X_{(i+1)}}
\end{displaymath}
where $X_{(1)} \geq \ldots \geq X_{(n)}$  are the\emph{ order statistics} of the sample $X_1,\ldots,X_n$ (the non-increasing rearrangement of the sample). 

An integer sequence $(k_n)$ is said to be \emph{intermediate} if $\lim_{n\to \infty}k_n = \infty$ while $\lim_{n\to \infty} k_n/n=0$. It is well known that $F$ belongs to $\textsf{MDA}(\gamma)$  for some $\gamma>0$ if and only if, for all intermediate sequences $(k_n)$, $\widehat{\gamma}(k_n)$ converges in probability towards $\gamma$ \citep{mason1982, dehaanferreira2006}. Under mildly stronger conditions, it can be shown that $\sqrt{k_n} (\widehat{\gamma}(k_n)-\mathbb{E}\widehat{\gamma}(k_n))$ is asymptotically Gaussian with variance $\gamma^2.$ This suggests that, in order to minimise the quadratic risk $\mathbb{E}[(\widehat{\gamma}(k_n)-\gamma)^2]$  or the absolute risk $\mathbb{E}\left| \widehat{\gamma}(k_n)-\gamma\right|$, an appropriate choice for $k_n$ has to be made. If $k_n$ is too large, the Hill estimator $\widehat{\gamma}(k_n)$ suffers a large bias and, if $k_n$ is too small, $\widehat{\gamma}(k_n)$ suffers erratic fluctuations. 

As all estimators of the extreme value index face this dilemma \citep[see][and references therein]{BeiGoeTeuSeg04,dehaanferreira2006,Res07a}, during the last three decades, a variety of data-driven selection methods for $k_n$ has been proposed in the literature (see \citet{HaWe97}, \citet{hallwelsh1985}, \citet{MR1821820}, \citet{draismadehaanpengpereira1999}, \citet{MR1632189}, \citet{dreesdehaanresnick2000}, \citet{MR2435450}, \citet{carpentierkim2014} to name a few). 
A related but distinct problem is considered by 
\citet{carpentierkim2014b}: constructing uniform and adaptive confidence intervals for the extreme value index. 

The rationale for investigating adaptive Hill estimation stems from computational simplicity and variance optimality of properly chosen Hill estimators   \citep{beirlantbouquiauxwerker2006}. 

The hallmark of our approach is to combine techniques of \textsf{EVT} with tools from concentration of measure theory.
As up to our knowledge, the impact of the
concentration of measure phenomenon in \textsf{EVT} has received little attention, we comment and motivate the use of concentration arguments. Talagrand's concentration phenomenon for products of exponential distributions is one instance of a general phenomenon: concentration of measure in product spaces \citep{Led01, LeTa91}. The phenomenon may be summarised in a simple quote: functions of independent random variables that do not depend too much on any of them are almost constant \citep{Tal96}.  

The concentration approach helps to split the investigation in two steps: the first step consists in bounding the
fluctuations of the random variable under concern around its median or its expectation, while the second step focuses on the
expectation. This approach has seriously simplified the investigation of suprema of empirical processes and made the life
of many statisticians easier \citep{Tal96c, Tal05, massart:2003, Kol08}.  To point out the potential uses of
concentration inequalities in the field of \textsf{EVT} is one purpose of this paper. In statistics, concentration
inequalities have proved very useful when dealing with estimator selection and adaptivity issues: sharp, non-asymptotic tail bounds can be combined 
with simple union bounds in order to obtain uniform guarantees of the risk of collection of estimators. Using concentration
inequalities to investigate adaptive choice of the number of order statistics to be used in tail index estimation is a natural thing
to do. 

In the present setting, tail index estimators are functions of independent random variables. 
Talagrand's quote raises a first question: in which way are these tail functionals  smooth functions of independent random variables? We do not attempt here to revisit the asymptotic approach described by \citep{drees1998smooth} which equates smoothness with Hadamard differentiability. Our approach is non-asymptotic and our
conception of smoothness somewhat circular, smooth functionals are these functionals for which we can obtain good concentration inequalities. 

In this paper, we combine  Talagrand's concentration
inequality for smooth functions of independent exponentially
distributed random variables (Theorem \ref{bernstein:expo}) with three traditional  tools of \textsf{EVT}: the quantile transform, Karamata's representation for slowly  varying functions, and R\'enyi's characterisation of  the joint distribution of order statistics of exponential samples. This allows us to  establish concentration inequalities for the Hill process $(\sqrt{k}(\widehat{\gamma}(k)-\mathbb{E}\widehat{\gamma}(k))_{k})$ (Theorem \ref{prp:hill:concentration}% and \ref{prp:conc-ineq-hill:condition} and Corollary \ref{cor:concen:hill}
) in Section \ref{sec:conc-ineq-hill}. 

In Section \ref{sec:adapt-hill-estim}, we build on these concentration inequalities to analyse the performance of a variant of Lepki's rule defined in Sections \ref{sec:lepsk-meth-adapt} and   \ref{sec:adapt-hill-estim}: Theorem \ref{thm:adapt-hill-estim} describes an oracle inequality and Corollary \ref{cor:adapt-hill-estim-2ndrv} assesses the performance of this simple selection rule under a mild assumption on the so-called von Mises function.
% the assumption that
% the absolute value 
% \begin{displaymath}
% \left|  \overline{F}(x) x^{1/\gamma} -C \right| \leq C' x^{\rho/\gamma}
% \end{displaymath}
% for some unknown $\rho<0$ and $C,C'>0$. 
Note that the  condition is less demanding than the regular variation condition on the von Mises function that has often been assumed when looking for adaptive tail index estimators (notable exceptions being \citep{carpentierkim2014} and \citep{MR2435450}).  It reveals that the performance of Hill estimators selected by Lepski's method matches known lower bounds (see Section \ref{sec:lower-bound}) that is, they suffer the loss of efficiency which is inherent to this problem, but not more.

Proofs 
are given in Section \ref{sec:proofs}. Finally,  in Section \ref{sec:simulations}, we examine the performance of this  adaptive Hill estimator for finite sample sizes using Monte-Carlo simulations.

\section{Background, notations and tools}
\label{sec:backgr-notat-tools}

\subsection{The Hill estimator as a smooth tail statistics}

The quantile function $F^\leftarrow$ is the generalised inverse of the distribution function $F$. The \emph{tail quantile function} of $F$ is a non-decreasing function defined on $(1,\infty)$ by
$U=(1/(1-F))^{\leftarrow}$, or by
\begin{displaymath}
U(t) = \inf \{  x~:~F(x)\geq 1-1/t  \}= F^\leftarrow(1-1/t)\pt 
\end{displaymath} 

In this text, we use a variation of the quantile transform that fits \textsf{EVT}: if $E$ is exponentially distributed, then $U(\exp (E))$ is distributed according to $F$. 
Moreover, by the same argument, 
the order statistics $X_{(1)} \geq \ldots \geq X_{(n)}$ are distributed as a monotone transformation of the order statistics $Y_{(1)} \geq \ldots \geq Y_{(n)}$ of a sample of $n$ independent standard exponential random variables. 
\begin{displaymath}
(X_{(1)},\ldots,X_{(n)}) \stackrel{d}{=} \left(U(\mathe^{Y_{(1)}}), \ldots, U(\mathe^{Y_{(n)}}) \right) \, .
\end{displaymath}

Thanks to R\'enyi's representation for order statistics of exponential samples, agreeing on $Y_{(n+1)}=0$,  the rescaled exponential spacings $Y_{(1)}-Y_{(2)}, \ldots,i(Y_{(i)}-Y_{(i+1)}), \ldots, (n-1)(Y_{(n-1)}-Y_{(n)}), n Y_{(n)}$  are independent and  exponentially distributed.

The quantile transform and R\'enyi's representation are complemented by Karamata's representation for slowly varying functions. Recall that a  function $L$ is \emph{slowly varying at infinity} if for all $x>0$, $\lim_{t\to \infty } L(tx)/L(t)= x^0= 1$.  The von Mises condition specifies the form of Karamata's representation 
\citep[see][Corollary 2.1]{Res07a} of the slowly varying component $t^{-\gamma}U(t)$ of $U(t)$. 

\begin{dfn}[\textsc{von Mises condition}] \label{dfn:vmises:cond} A distribution function $F$ belonging to $\textsf{MDA}(\gamma), \gamma>0$, satisfies the von Mises condition if  there exist a constant $t_0\geq 1$, a constant $c=U(t_0)t_0^{-\gamma}$ and a measurable function $\eta$ on
  $(1,\infty)$ such that, for $t\geq t_0$
  \begin{displaymath}%\label{von:mises:cond}
U(t)=  c t^\gamma  \exp\left(\int_{t_0}^t\frac{\eta(s)}{s} \mathrm{d}s   \right) 
\end{displaymath}
with $\lim_{s\to \infty}\eta(s)=0$.
The function $\eta$ is called the \emph{von Mises function}.
\end{dfn}
In the sequel, we assume that the sampling distribution $F\in \mathsf{MDA}(\gamma)$, $\gamma>0$, satisfies the von Mises condition with $t_0=1$, von Mises function $\eta$ and define the non-increasing function $\overline{\eta}$
from $[1,\infty)$ to $[0,\infty)$ by $\overline{\eta}(t)=\sup_{s\geq t} |\eta(s)|$.  In the text, we assume that $\overline{\eta}(1)<\infty$.

Combining the quantile transformation, R\'enyi's and Karamata's representations, it is straightforward that, under the von Mises condition,  the sequence  of Hill estimators is distributed as a  function of the largest order statistics of a standard exponential sample. 

\begin{prp}\label{hill:rep}
The vector of Hill estimators $(\widehat{\gamma}(k))_{k< n}$ is distributed as the random vector
\begin{equation}\label{eq:hill:rep}
\left( \frac{1}{k} \sum_{i=1}^k \int_{0}^{E_i} \left(\gamma +\eta(\mathe^{\frac{u}{i}+Y_{(i+1)}})\right) \mathrm{d}u   \right)_{k< n}
\end{equation}
where $E_1,\ldots, E_n$ are  independent standard exponential random variables while, for $i \leq n$, $Y_{(i)}=\sum_{j=i}^n E_j/j$ is distributed like the $i$th order statistic of an $n$-sample of the exponential distribution. 
\end{prp}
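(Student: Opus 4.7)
\medskip
\noindent\textbf{Proof plan.} The strategy is to express the Hill estimator in the telescoping form involving log-spacings and then rewrite each log-spacing using, in order, the quantile transform, Karamata's representation under the von Mises condition, and Rényi's representation of exponential order statistics.

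First, I would start from the identity
\begin{displaymath}
  \widehat{\gamma}(k) \;=\; \frac{1}{k}\sum_{i=1}^k i\,\ln\frac{X_{(i)}}{X_{(i+1)}}
\end{displaymath}
given in the introduction, and then use the quantile transform stated before the proposition to replace $(X_{(1)},\dots,X_{(n)})$ by $(U(\mathe^{Y_{(1)}}),\dots,U(\mathe^{Y_{(n)}}))$ in distribution. Taking logarithms and plugging in the von Mises representation with $t_0=1$,
\begin{displaymath}
  \ln U(t)\;=\;\ln c + \gamma \ln t + \int_{1}^{t}\frac{\eta(s)}{s}\mathd s ,
\end{displaymath}
and performing the substitution $s=\mathe^{u}$ in the integral, I get
\begin{displaymath}
  \ln U(\mathe^{Y_{(i)}})\;=\;\ln c + \gamma\, Y_{(i)} + \int_{0}^{Y_{(i)}}\eta(\mathe^{u})\,\mathd u .
\end{displaymath}
Subtracting the analogous expression for $Y_{(i+1)}$ gives
\begin{displaymath}
  \ln\frac{X_{(i)}}{X_{(i+1)}}\;\stackrel{d}{=}\;\gamma\,(Y_{(i)}-Y_{(i+1)}) \;+\;\int_{Y_{(i+1)}}^{Y_{(i)}}\eta(\mathe^{u})\,\mathd u .
\end{displaymath}

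Next I would invoke Rényi's representation: the rescaled spacings $E_i=i(Y_{(i)}-Y_{(i+1)})$ are i.i.d.\ standard exponentials (with $Y_{(n+1)}=0$), so $i\,\gamma(Y_{(i)}-Y_{(i+1)})=\gamma E_i$. For the integral term I would perform the linear change of variable $u=v/i + Y_{(i+1)}$, which maps $v\in[0,E_i]$ onto $u\in[Y_{(i+1)},Y_{(i)}]$ with $\mathd u=\mathd v/i$, yielding
\begin{displaymath}
  i\int_{Y_{(i+1)}}^{Y_{(i)}}\eta(\mathe^{u})\,\mathd u \;=\;\int_0^{E_i}\eta\!\left(\mathe^{v/i+Y_{(i+1)}}\right)\mathd v .
\end{displaymath}
Writing $\gamma E_i=\int_0^{E_i}\gamma\,\mathd v$ and summing over $i=1,\dots,k$, then dividing by $k$, produces exactly formula \eqref{eq:hill:rep}.

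I do not anticipate any real obstacle: the proof is essentially a bookkeeping exercise chaining the three representations. The only small point that deserves care is the consistent use of $t_0=1$ (absorbing the additive constant $\ln c$ which cancels in every log-spacing) and the observation that the identity holds jointly for all $k<n$ in distribution because the random vectors on both sides are built from the \emph{same} exponential sample $(E_1,\dots,E_n)$ through the $Y_{(i)}$'s; no independence between the coordinates indexed by $k$ is being claimed, only joint equality in distribution of the whole vector.
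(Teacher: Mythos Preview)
Your proposal is correct and follows essentially the same route as the paper: quantile transform, Karamata/von Mises representation of $\ln U(\mathe^y)$, R\'enyi's representation for the exponential spacings, and the linear change of variable $u=v/i+Y_{(i+1)}$. The only cosmetic difference is that the paper keeps $\gamma+\eta(\mathe^u)$ bundled inside the integral from the start, whereas you split off the $\gamma$ part and recombine at the end; your remark on the joint distributional identity for all $k<n$ is a welcome clarification.
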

For a fixed $k<n$, a second distributional representation is available, 
\begin{equation}\label{rep:hill:var}
\widehat{\gamma}(k) \dis \frac{1}{k} \sum_{i=1}^k \int_{0}^{E_i} \left(\gamma +\eta(\mathe^{u+Y_{(k+1)}})\right) \mathrm{d}u  
\end{equation}
where $E_1,\ldots, E_k$ and $Y_{(k+1)}$ are defined as in Proposition \ref{hill:rep}.

This second, simpler, distributional representation stresses the fact that, conditionally on $Y_{(k+1)}$, $ \widehat{\gamma}(k) $ is distributed as a mixture of sums of independent random variables approximately distributed as exponential random variables with scale $\gamma$.  This distributional identity suggests that the variance of $\widehat{\gamma}(k)$ scales like $\gamma^2/k$, an intuition that is corroborated by analysis, see Section~\ref{sec:conc-ineq-hill}. 

The bias of $\widehat{\gamma}(k)$ is connected with the von
Mises function $\eta$ by the next formula
\begin{displaymath}
  \EXP \widehat{\gamma}(k) -\gamma = \EXP \left[ \int_0^\infty \mathe^{-v}
  \eta\left(\mathe^{Y_{(k+1)}} \mathe^v\right) \mathrm{d}v \right]=  \EXP \left[\int_1^\infty 
  \frac{\eta\left(\mathe^{Y_{(k+1)}} v\right)}{v^2} \mathrm{d}v\right]\, .
\end{displaymath}
     
Henceforth, let $b$ be defined on $(1,\infty)$ by 
\begin{equation}\label{cond:bias}
b(t)=\int_1^\infty  \frac{\eta\left(t v\right)}{v^2} \mathrm{d}v =t \int_t^\infty \frac{\eta\left( v\right)}{v^2} \mathrm{d}v \, .
\end{equation}
The quantity  $b(t)$ is the bias of the Hill estimator $\widehat{\gamma}(k)$ given $\overline{F}(X_{(k+1)})=1/t$. 
The second expression for $b$ shows that $b$ is differentiable with respect to $t$ (even though $\eta$ might be nowhere differentiable) and that 
\begin{displaymath}
b'(t)=\frac{b(t)-\eta(t)}{t} \, .
\end{displaymath}
The von Mises function governs both the rate of convergence of $U(tx)/U(t)$ towards $x^\gamma$, or equivalently of $\overline{F}(tx)/\overline{F}(t)$ towards $x^{-1/\gamma}$,
 and the rate of convergence of $|\mathbb{E} \widehat{\gamma}(k)-\gamma|$ towards $0$.

\subsection{Frameworks}
\label{sec:frameworks}

The difficulty in extreme value index estimation stems from the fact that, for any collection of estimators, for any intermediate sequence $(k_n)$, and for any $\gamma>0$, there is a distribution function $F \in \mathsf{MDA}(\gamma)$ such that the bias $|\mathbb{E} \widehat{\gamma}(k_n)-\gamma|$ decays at an arbitrarily slow rate. This has led authors to put conditions on the rate of convergence of $U(tx)/U(t)$  towards $x^\gamma$ as $t$ tends to infinity while $x>0$, or equivalently, on the rate of convergence of $\overline{F}(tx)/\overline{F}(t)$  towards $x^{-1/\gamma}$. These conditions have then to be translated into conditions on  the rate of decay of the bias of estimators. As we focus on Hill estimators, the connection between the rate of convergence  of $U(tx)/U(t)$  towards $x^\gamma$
and the rate of decay of the bias is transparent and well-understood
\citep{MR1925570}: the theory of  $O$-regular variation provides an adequate setting for describing both rates of convergence \citep{BiGoTe87}. 
In words, if a positive  function  $g$ defined over $[1,\infty)$ is such that, for some
$\alpha\in \mathbb{R}$, for all $\Lambda>1$, $\limsup_t \sup_{x \in [1,\Lambda]}
g(tx)/g(t) < \infty$, $g$ is said to have \emph{bounded increase}. If $g$ has bounded increase, the class
$O\Pi_g$ is the class of measurable    functions $f$ on some interval 
$[a,\infty), a>0$, such that as $t\to \infty,$ $f(tx)-f(t)= O(g(t))$  for all $x\geq 1.$

For example, the analysis carried out by \citet{carpentierkim2014} rests on  the condition that, if $F \in \mathsf{MDA}(\gamma)$, 
 for some  $C>0$, $D \neq 0$ and $\rho<0$,
\begin{equation}\label{eq:kimcarpcond}
  \left| \frac{\overline{F}(x)}{x^{-1/\gamma}} - C \right| \leq Dx^{\rho/\gamma} \, .
\end{equation}
%where $\widehat{\gamma}(k)$ is the Hill estimator computed from the $k+1$ largest order statistics.
This condition  implies that $\ln(t^{-\gamma}U(t))\in O\Pi_g$ with $g(t)=t^\rho$ \citep[p. 473]{MR1925570}.
Thus, under the von Mises condition, Condition \eqref{eq:kimcarpcond}  implies that
the function $\int_t^\infty (\eta(s)/s) \mathrm{d}s $ belongs to $O\Pi_g$
with $g(t)=t^\rho.$ 
Moreover,  the Abelian and Tauberian theorems from \citep{MR1925570} assert that 
$ \int_t^\infty (\eta(s)/s) \mathrm{d}s \in O\Pi_g$ if and only if 
$|\mathbb{E} \widehat{\gamma}(k_n) -\gamma|=O(g(n/k_n))$ for any
intermediate sequence $(k_n)$. 

In this text, we are ready to assume that if $F \in \mathsf{MDA}(\gamma)$ and satisfies the von Mises condition, then,  for some $C>0$ and $\rho<0$ and $t>1$, 
\begin{displaymath}
	|\overline{\eta}(t)| \leq C t ^\rho \, . 
\end{displaymath}
% \begin{displaymath}
%   |\mathbb{E} \widehat{\gamma}(k_n) -\gamma| \leq C \left(\frac{n}{k_n} \right)^\rho \, .
% \end{displaymath}
This condition is arguably more stringent than (\ref{eq:kimcarpcond}). 
However, we  do not want to assume that $\eta$ %$U$  (or equivalently $\overline{F}$) 
satisfies a  regular variation property. This would imply that $t\mapsto |b(t)|$ is $\rho$-regularly varying.

Indeed, assuming as in \citep{hallwelsh1985} and several subsequent papers that 
$F$ satisfies
\begin{equation}\label{hall:condition}
\overline{F}(x)=Cx^{-1/\gamma} \left(1+Dx^{\rho/\gamma} + o(x^{\rho/\gamma})\right)
\end{equation}
where $C>0, D\neq0$ are constants and $\rho <0$, or equivalently,\citep*{CsoDeMa85,MR1632189}
that $U$ satisfies  
\begin{displaymath}
U(t) {=}C^{\gamma}t^{\gamma} \left( 1 + \gamma
  DC^{\rho} t^{\rho} + o(t^{\rho}) \right) \, 
\end{displaymath}
(which entails that $\eta$ is regularly varying)
makes the problem of extreme value index estimation easier (but not easy).
 These conditions entail that, for any intermediate sequence $(k_n)$, the ratio $|\mathbb{E}[\widehat{\gamma}(k_n)-\gamma]|/(n/k_n)^\rho$
converges towards a finite limit  as $n$ tends to $\infty$
\citep{BeiGoeTeuSeg04,dehaanferreira2006,MR1925570}. This makes the estimation of the
second-order parameter a very natural intermediate objective \citep[see for example][]{MR1632189}. 

\subsection{Lepski's method and adaptive tail index estimation}
\label{sec:lepsk-meth-adapt}
The necessity of developing data-driven index selection methods is illustrated in Figure \ref{fig:riskcomp-student}, which displays the estimated standardised  
root mean squared error (\textsc{rmse}) of Hill estimators 
$$\mathbb{E}\left[\left(\frac{\widehat{\gamma}(k)}{\gamma} -1\right)^2\right]^{1/2}$$  
as a function of $k$ for four related sampling distributions which all
satisfy the second-order  condition \eqref{hall:condition} with
different values of the second-order parameters.%%An oracle would pick for each sampling distribution, the index that minimize the \textsc{rmse}.   
\begin{figure}[htb]
 \centering
  \includegraphics[width=.9\textwidth]{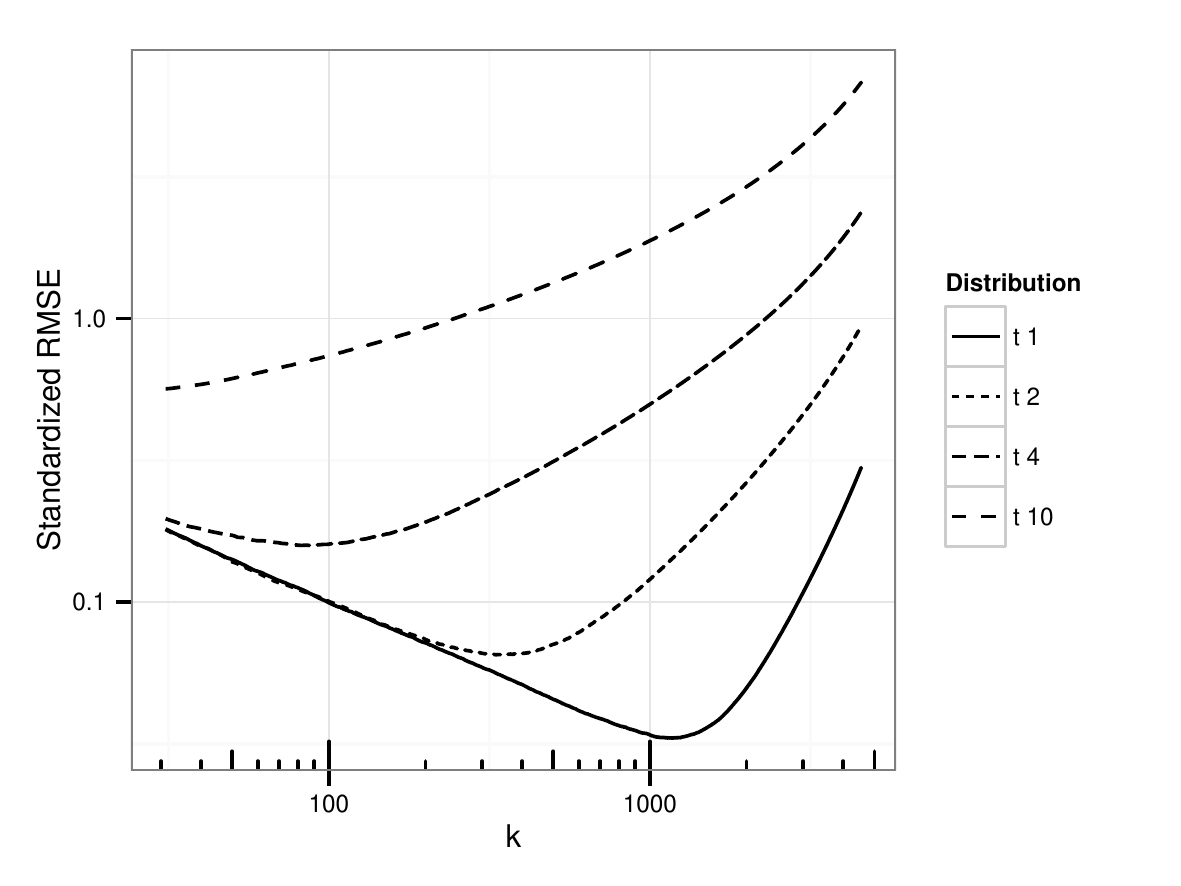}
    \caption{Estimated standardised \textsc{rmse}
      as a function of $k$ for samples of size $10000$ from Student's
      distributions with different degrees of freedom
      $\nu=1,2,4,10$. All four distributions satisfy Condition
      \eqref{hall:condition} with $|\rho|=2/\nu$. The increasing parts
      of the lines reflect the values of $\rho$. \textsc{rmse} is
      estimated by averaging over $5000$ Monte-Carlo simulations.}
    \label{fig:riskcomp-student}
\end{figure}

Under this second-order condition \eqref{hall:condition}, \citeauthor{hallwelsh1985} proved that the asymptotic mean squared error of the Hill estimator is minimal for 
sequences $(k_n^*)_n$ satisfying
\begin{displaymath}
k^*_n \sim K(C,D,\rho) \, n^{2|\rho|/(1+2|\rho|)} 
\end{displaymath}
with $K(C,D,\rho)=  \left({C^{2|\rho|}(1+|\rho|)^2}/{2D^2|\rho|^3}\right)^{1/(1+2|\rho|)}$.
Since $C>0$, $D \neq 0$ and the second-order parameter $\rho<0$ are
usually  unknown, many authors have been interested in the
construction of data-driven selection procedures for $k_n$ under
conditions such as \eqref{hall:condition}. A great deal of ingenuity has been dedicated to the estimation of the second-order parameters and to 
the use of such estimates when estimating first order parameters. 

 As we do not want to assume a second-order condition such as Condition \eqref{hall:condition}, we resort to Lepski's method which is a general attempt to balance bias and variance. 

Since its introduction \citep{lepski1991}, this  general method for model selection  has been
proved to achieve adaptivity and to provide one with oracle inequalities  in a variety of inferential contexts ranging
from density estimation to inverse problems and  classification \citep{lespkitsybakov2000,lepski1991,lepski1990,lepski1992}. 
Very readable introductions to Lepski's method and its connections
with penalised contrast methods can be found in \citep{birge:2001,mathe2006}.
In \textsf{EVT}, we are aware of three papers that explicitly
rely on this methodology:  \citep{MR1632189},
\citep{MR2435450} and \citep{carpentierkim2014}.  

The selection rule analysed in the present paper (see Section \ref{sec:adapt-hill-estim} for a precise definition) is a variant of  the preliminary selection rule introduced in \citep{MR1632189}
\begin{equation}\label{eq:rule:dk}
  \overline{\kappa}_n(r_n) = \min \left\{ k \in \{ 2,\ldots,n\} \colon \max_{2\leq i\leq k} \sqrt{i}|\widehat{\gamma}(i)-\widehat{\gamma}(k)|> r_n  \right\}
\end{equation}
where $(r_n)_n$ is a sequence of thresholds such that $\sqrt{\ln \ln n} =  o(r_n)$  and $r_n = o(\sqrt{n})$, and $\widehat{\gamma}(i)$ is the Hill estimator computed from the $(i+1)$ largest order statistics. The definition of this ``stopping time" is motivated  by Lemma 1 from \citep{MR1632189} which asserts that, under the von Mises condition, 
\begin{displaymath}
\max_{2 \le i \le k_n} \sqrt{i} |\widehat{\gamma}(i)-\esp\left[ \widehat{\gamma}(i)\right] | = O_P \left( \sqrt{\ln \ln n} \right) \pt
\end{displaymath}
In words, this selection rule almost picks out the largest index $k$ such
that, for all $i$ smaller than $k$,  $\widehat{\gamma}(k)$ differs from $\widehat{\gamma}(i)$ by a
quantity that is not much larger than the typical fluctuations of
$\widehat{\gamma}(i)$. This index selection rule can be performed graphically by interpreting an alternative Hill plot as shown on Figure \ref{fig:cauchy-lepski}
\citep[see][for a discussion on the merits of alt-Hill plots]{dreesdehaanresnick2000,Res07a}.

\begin{figure}[thb]\centering
 \includegraphics[width=.9\textwidth]{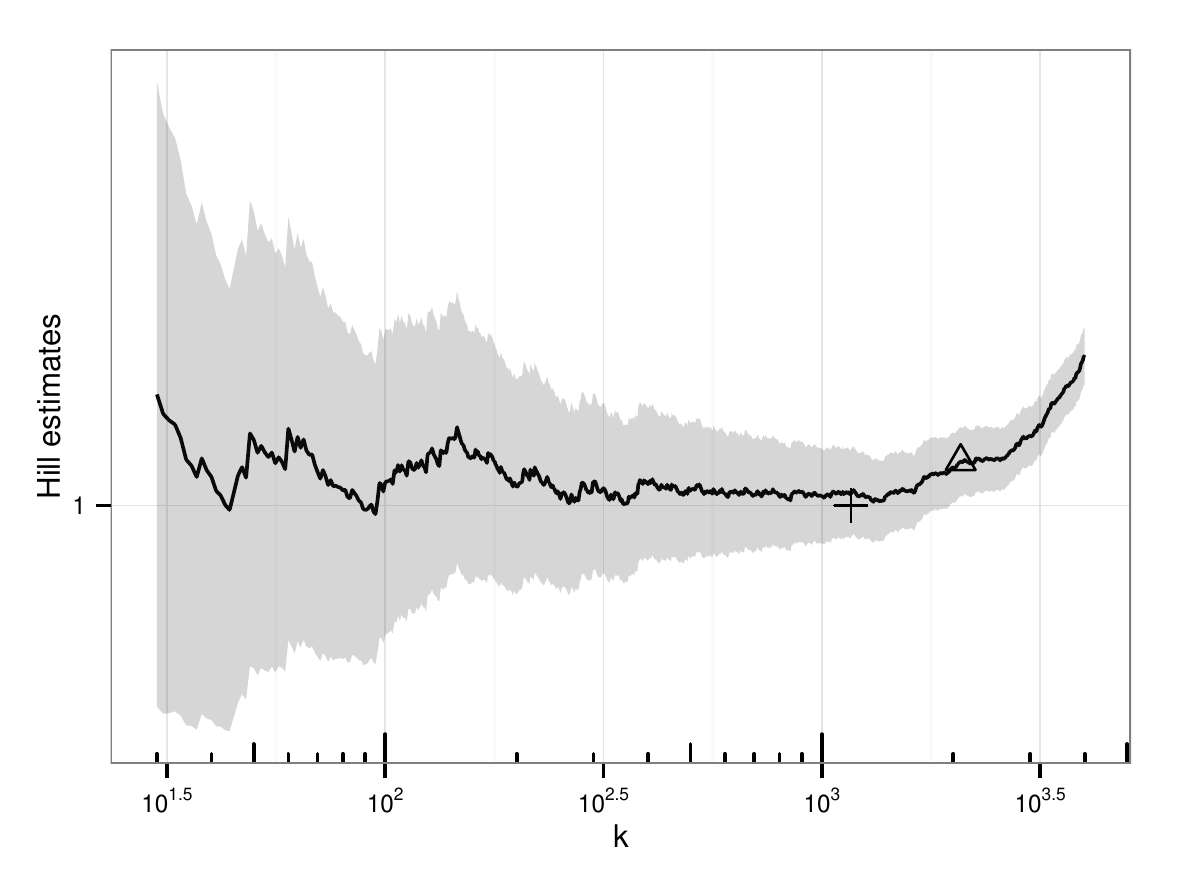}
  \caption{Lepski's method illustrated on a alt-Hill plot. The plain line describes the sequence of Hill estimates as a function of index $k$ computed on a pseudo-random sample of size $n = 10 000$ from Student distribution with 1 degree of freedom (Cauchy distribution). Hill estimators are computed from the positive order statistics. The grey ribbon around the plain line provides a graphic illustration of Lepski's method. For a given value of $i$, the width of the ribbon is $2 r_n \widehat{\gamma}(i)/\sqrt{i}$. A point $(k,\widehat{\gamma}(k))$ on the plain line corresponds to an eligible index if the horizontal segment between this point and the vertical axis lies inside the ribbon that is, if for all $i, 30\leq i< k$, $|\widehat{\gamma}(k)-\widehat{\gamma}(i)|\leq r_n \widehat{\gamma}(i)/\sqrt{i}$. If $r_n$  were replaced by an appropriate quantile of the Gaussian distribution, the grey ribbon would just represent the confidence tube that is usually added on Hill plots. The triangle represents the selected index with $r_n = \sqrt{2.1 \ln \ln n}$. The cross represents the oracle index estimated from Monte-Carlo simulations, see Table \ref{tab:ratios:1}.}
  \label{fig:cauchy-lepski}
\end{figure}

The goal of \citet{MR1632189}  is not to investigate the performance of the preliminary selection rule defined in Display \eqref{eq:rule:dk} but to design a selection rule $\widehat{\kappa}_n(r_n)$, based on $\overline{\kappa}_n(r_n)$, that would asymptotically mimic the optimal selection rule $k^*_n$ under second-order conditions. 

Our goal, as in \citep{MR2435450,carpentierkim2014}, is to derive
non-asymptotic risk bounds without making a second-order assumption. 
 In both papers,  the rationale for working with some
special  collection of estimators seems to be the ability to derive
non-asymptotic deviation  inequalities for $\widehat{\gamma}(k)$ either
from  exponential inequalities for log-likelihood ratio statistics or from  simple binomial tail inequalities such as Bernstein's inequality \citep[see][Section 2.8]{BoLuMa13}. 

In models satisfying Condition \eqref{hall:condition}, the
estimators from  \citep{MR2435450}  achieve the optimal rate up to a
$\ln(n)$ factor.
\citet{carpentierkim2014} prove that the risk  of their data-driven
estimator decays at the optimal rate $n^{|\rho|/(1+2|\rho|)}$ up to a
factor $r_n^{2|\rho|/(1+2|\rho|)}= (\ln\ln n)^{|\rho|/(1+2|\rho|)}$ in
models satisfying Condition \eqref{eq:kimcarpcond}. 

We aim at achieving optimal risk bounds under Condition
\eqref{eq:kimcarpcond} using a simple estimation method requiring
almost no calibration effort and based on mainstream extreme value
index estimators. Before describing the keystone of our approach in
Section \ref{sec:talagr-conc-phen}, we recall the recent lower risk
bound for adaptive extreme value index estimation.

\subsection{Lower bound}
\label{sec:lower-bound}
One of the key results in \citep{carpentierkim2014} is a lower bound on the accuracy of adaptive tail index estimation. 
This lower bound reveals that, just as for estimating a density at a
point \citep{lepski1991,lepski1992}, or point estimation in Sobolev
spaces \citep{MR1700239}, as far as tail index estimation is concerned, adaptivity has a price. Using Fano's Lemma, and a Bayesian game that extends cleanly in frameworks of \citep{MR2435450} and \citep{Nov14}, \citeauthor{carpentierkim2014} were able to prove the next minimax lower bound.

\begin{thm}\label{thm-kc-lower-bound}
Let $\rho_0 <-1$ and $ v \in [0, \mathe/(1+2\mathe)]$. Then, for any tail index estimator $\widehat{\gamma}$ and any sample size $n$ such that $M=\lfloor \ln n\rfloor> \mathe/v$, there exists a probability distribution $P$ such that
\begin{enumerate}[i)]
\item $P \in \mathsf{MDA}(\gamma)$  with $\gamma>0$,
\item $P$ meets the von Mises condition with von Mises function $\eta$ satisfying
\begin{displaymath}
  \overline{\eta}(t) \leq \gamma t^{\rho}
\end{displaymath}
for some $\rho \in [\rho_0, 0)$,
\item \begin{displaymath}
  P\left\{
    |\widehat{\gamma}-\gamma|\geq \frac{C_\rho}{4} \gamma
    \left(\frac{v\ln\ln n}{n}\right)^{|\rho|/(1+2|\rho|)} \right\} \geq \frac{1}{1+2\mathe} 
\end{displaymath}
and 
\begin{displaymath}
 \EXP_P \left[ \frac{|\widehat{\gamma}-\gamma|}{\gamma} \right] \geq \frac{C_\rho}{4(1+2\mathe)}  \left(\frac{v\ln\ln n}{n}\right)^{|\rho|/(1+2|\rho|)} \, , 
\end{displaymath}
with $C_\rho=1-\exp\left(-\tfrac{1}{2(1+2|\rho|)^2}\right)$. 
\end{enumerate}
\end{thm}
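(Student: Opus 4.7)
The plan is to reduce the estimation problem to a multiple testing problem and apply Fano's inequality. The $(\ln\ln n)^{|\rho|/(1+2|\rho|)}$ factor, which is the hallmark of the adaptivity price, will arise from the fact that we set up $M \asymp \ln n$ hypotheses rather than just two.

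First, I would construct a family $\{P_0, P_1, \ldots, P_M\}$ with $M = \lfloor \ln n \rfloor$ of candidate distributions. A natural template is to pick a single value $\rho \in [\rho_0, 0)$ and, for a geometric sequence of scales $t_1 < t_2 < \cdots < t_M$, to define distributions whose tail quantile functions agree with a pure Pareto law $t \mapsto t^{\gamma_j}$ below $t_j$ and whose von Mises functions $\eta_j$ switch on a perturbation of amplitude $\sim t^\rho$ above $t_j$. The associated extreme value indices $\gamma_j$ will be of the form $\gamma (1 + \epsilon_j)$ where $\epsilon_j = \pm \tfrac{C_\rho}{4}(v \ln\ln n/n)^{|\rho|/(1+2|\rho|)}$; these are the candidate parameter values that any estimator must be able to separate. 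Each $P_j$ must be checked to lie in $\mathsf{MDA}(\gamma_j)$ and to satisfy $\overline{\eta}_j(t) \leq \gamma t^\rho$, which forces the size of the perturbation. By tuning the $t_j$ on the critical Lepski scale $t_j \asymp (n/(v \ln\ln n))^{1/(1+2|\rho|)}$ geometrically spaced, neighbouring hypotheses will be almost indistinguishable from $n$ i.i.d. samples.

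Second, I would compute the Kullback--Leibler divergence $K(P_j^{\otimes n}, P_{j'}^{\otimes n}) = n K(P_j, P_{j'})$ between neighbouring hypotheses. Using the quantile transform and Karamata's representation (Definition \ref{dfn:vmises:cond}), $K(P_j, P_{j'})$ reduces to an integral of $\eta_j - \eta_{j'}$ against the tail mass put on the support region where the two densities differ. Because the perturbation lives above $t_j$, which corresponds to the tail event $\overline{F} \leq 1/t_j$, only a fraction $\sim 1/t_j$ of the sample contributes, so one gets $n K(P_j, P_{j'}) \lesssim n t_j^{-1} \cdot t_j^{2\rho} = n t_j^{-(1+2|\rho|)}$. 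Choosing $t_j$ as above makes this quantity comparable to $v \ln\ln n$, hence to $\ln M$, which is exactly what Fano's inequality requires.

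Third, I would invoke the standard reduction from estimation to testing: define the test $\widehat{\jmath} \in \arg\min_j |\widehat{\gamma} - \gamma_j|$; if $|\widehat{\gamma} - \gamma| < \tfrac{1}{2}\min_{j \neq j'} |\gamma_j - \gamma_{j'}|$ then $\widehat{\jmath}$ recovers the true hypothesis. Fano's inequality, in the sharp form leveraged by \citeauthor{carpentierkim2014} (using the $1/(1+2\mathe)$ constant), then lower bounds the maximal error probability by $1 - (\max_{j,j'} K(P_j^{\otimes n}, P_{j'}^{\otimes n}) + \ln 2)/\ln M \geq 1/(1+2\mathe)$ as soon as $M > \mathe/v$. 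The in-expectation bound follows from Markov's inequality. The constant $C_\rho = 1 - \exp(-1/(2(1+2|\rho|)^2))$ will emerge from optimising the amplitude $\epsilon_j$ against the KL constraint.

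The principal obstacle is step two: verifying simultaneously that (i) the perturbations respect the constraint $\overline{\eta}(t) \leq \gamma t^\rho$ for \emph{the same} $\rho$ across all hypotheses, (ii) the extreme value indices $\gamma_j$ are genuinely separated by the target margin, and (iii) the pairwise KL divergences are of order $\ln\ln n$. The scaling of $t_j$ and the amplitude of the perturbation are tightly coupled by these three requirements, and this is where the exponent $|\rho|/(1+2|\rho|)$ emerges. A careful inspection of the interplay between Karamata's representation and the bias formula $b(t) = t \int_t^\infty \eta(v)v^{-2}\mathrm{d}v$ will make this balance explicit.
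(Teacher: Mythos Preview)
Your plan has a genuine structural gap: you fix a \emph{single} second-order exponent $\rho$ for all $M$ hypotheses, and this is precisely what prevents the construction from delivering the $(\ln\ln n)^{|\rho|/(1+2|\rho|)}$ adaptivity price. With a fixed $\rho$, the two constraints you list --- the von Mises bound $|\gamma_j-\gamma|\leq \gamma t_j^{\rho}$ and the KL bound $n\,t_j^{-1}(\gamma_j/\gamma-1)^2\lesssim \ln M$ --- force every admissible $\gamma_j$ to lie within an interval of width $O\big((v\ln M/n)^{|\rho|/(1+2|\rho|)}\big)$ around $\gamma$. You cannot pack $M\asymp\ln n$ points into that interval while keeping the pairwise separation of the \emph{same} order; the minimum gap degrades by a factor $1/M$, and the resulting lower bound is no better than the non-adaptive two-point bound. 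Varying the breakpoints $t_j$ geometrically does not help: if $t_j$ grows, the allowed perturbation $\gamma t_j^{\rho}$ shrinks, and the $\gamma_j$ pile up near $\gamma$.

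The paper's construction (following \citeauthor{carpentierkim2014}) resolves this by assigning each hypothesis its \emph{own} exponent $\rho_i=\rho+i/M$, $i=1,\dots,M$, and choosing breakpoints $\tau_i=(n/(v\ln M))^{\gamma/(1+2|\rho_i|)}$ and indices $\gamma_i=\gamma+\gamma(n/(v\ln M))^{\rho_i/(1+2|\rho_i|)}$. Because $\ln(n/(v\ln M))\asymp M$, the differences in the exponents $\rho_i/(1+2|\rho_i|)$ translate into \emph{multiplicative} gaps between the $\gamma_i-\gamma$ that are bounded away from~$1$; this is where the constant $C_\rho=1-\exp(-1/(2(1+2|\rho|)^2))$ appears. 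Meanwhile each $P_i$ is a Pareto change-point distribution whose Kullback--Leibler divergence to the pure Pareto center $P_0$ equals $\tfrac12\tau_i^{-1/\gamma}(\gamma_i/\gamma-1)^2=v\ln M/(2n)$, uniformly in $i$. The paper then invokes Birg\'e's Lemma rather than Fano's, which only requires these divergences to the common center $P_0$ and bypasses the pairwise computation you propose in step two; this is a real simplification, since the pairwise KL between two change-point distributions with different breakpoints is messier. The moral is that the $\ln\ln n$ factor encodes uncertainty about $\rho$, not about $\gamma$ at a fixed $\rho$, and your hypotheses must reflect that.
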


Using Birg\'e's Lemma instead of Fano's Lemma, we provide a simpler, shorter proof of this theorem (see Appendix \ref{proof:lower:bound}). 

The lower rate of convergence provided by Theorem
\ref{thm-kc-lower-bound} is another incentive to revisit the
preliminary tail index estimator from \citep{MR1632189}. However, instead of
using a sequence $(r_n)_n$ of order larger than $\sqrt{\ln \ln n}$ in
order to calibrate pairwise tests and ultimately to design estimators
of the second-order parameter (if there are any), it is worth
investigating a minimal sequence where $r_n$  is of order $\sqrt{\ln
  \ln n}$, and check whether the corresponding adaptive estimator
achieves the Carpentier-Kim lower bound (Theorem
\ref{thm-kc-lower-bound}).

In this paper, we focus on $r_n$ of the order $\sqrt{\ln \ln n}$. The rationale for imposing  $r_n$ of the order $\sqrt{\ln \ln n}$ can be understood by the fact that, even if the sampling distribution is a pure Pareto distribution with shape parameter $\gamma$  ($\overline{F}(x)= (x/\tau)^{-1/\gamma}$  for $x\geq \tau>0$), if 
$$\limsup  r_n/(\gamma \sqrt{2 \ln \ln n})<1 \, ,$$ the preliminary selection rule
 will, with high
probability, select a small value of $k$ and thus pick out a suboptimal
estimator.  This can be justified  using results  from \citep{DarErd56} (see Appendix \ref{sec:calibr-prel-select} for details). 

Such an endeavour requires sharp probabilistic tools. They are the topic of the next section.

\subsection{Talagrand's concentration phenomenon for products of exponential distributions}
\label{sec:talagr-conc-phen}
%% TODO  what concentration is about.

Deriving authentic concentration inequalities for Hill estimators is not straightforward. Fortunately, the construction of such inequalities turns out to be possible thanks to general functional inequalities that hold for functions of independent exponentially distributed random variables. We recall these inequalities (Proposition \ref{poincare:expo} and Theorem \ref{bernstein:expo}) which have been largely overlooked in statistics. A thorough and readable presentation of these inequalities can be found in \citep{Led01}. We start by the easiest result, a variance bound  that pertains to the family of Poincar\'e inequalities.

\begin{prp}[Poincar\'e inequality for exponentials, \citep{BoLe97}]
\label{poincare:expo}
 If $g$ is a differentiable function over $\mathbb{R}^n$ and $Z=g(E_1, \ldots,E_n)$  where $E_1,\ldots,E_n$ are independent standard exponential random variables, then 
  \begin{displaymath}
    \var(Z)\leq 4 \EXP \left[ \left\| \nabla g \right\|^2\right] \, . 
  \end{displaymath}
\end{prp}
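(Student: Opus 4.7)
The plan is to combine two classical ingredients: (i) a sharp one-dimensional Poincaré inequality for the standard exponential distribution, and (ii) the tensorization (sub-additivity) property of variance, which reduces the multivariate statement to $n$ invocations of the one-dimensional bound.

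For the one-dimensional inequality I would show that, for any sufficiently regular $f : [0,\infty) \to \mathbb{R}$ with $f(x)^2 \mathe^{-x} \to 0$ as $x\to\infty$, and $E$ a standard exponential,
\[
\var(f(E)) \leq 4 \, \EXP[f'(E)^2] \pt
\]
Both sides are invariant under adding a constant to $f$, so we may assume $f(0) = 0$, after which $\var(f(E)) \leq \EXP[f(E)^2]$ since variance is dominated by the second moment. Integration by parts against the exponential density, with the boundary term at $0$ killed by $f(0)=0$ and the boundary term at infinity killed by the tail condition, gives
\[
\EXP[f(E)^2] = \int_0^\infty f(x)^2 \mathe^{-x} \mathd x = 2 \int_0^\infty f(x) f'(x) \mathe^{-x} \mathd x \pt
\]
Cauchy--Schwarz applied to the right-hand side yields $\EXP[f(E)^2] \leq 2 \sqrt{\EXP[f(E)^2]}\sqrt{\EXP[f'(E)^2]}$, and squaring delivers $\EXP[f(E)^2] \leq 4\, \EXP[f'(E)^2]$.

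For the multivariate version I would invoke the Efron--Stein/martingale sub-additivity of variance: for $Z = g(E_1, \ldots, E_n)$,
\[
\var(Z) \leq \sum_{i=1}^n \EXP \Bigl[ \var\bigl(Z \mid (E_j)_{j\neq i}\bigr) \Bigr] \pt
\]
Since the $E_i$ are independent, the conditional law of $E_i$ given the remaining coordinates is still a standard exponential, so the one-dimensional bound applied to the slice $x_i \mapsto g(E_1, \ldots, x_i, \ldots, E_n)$ gives
\[
\var\bigl(Z \mid (E_j)_{j\neq i}\bigr) \leq 4 \, \EXP\bigl[(\partial_i g)^2 \bigm| (E_j)_{j\neq i}\bigr] \pt
\]
Summing over $i$ and taking expectations produces $\var(Z) \leq 4\, \EXP[\|\nabla g\|^2]$, as claimed.

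The main obstacle is not conceptual but bookkeeping: the integration by parts in the one-dimensional step requires the tail decay $f(x)^2 \mathe^{-x} \to 0$, and the tensorization step requires that the partial derivatives $\partial_i g$ exist and are square-integrable along almost every slice. For a differentiable $g$ with $\EXP[\|\nabla g\|^2] < \infty$, both points can be handled by a routine approximation (truncation then mollification) that preserves the key identity, so the clean statement with constant $4$ survives.
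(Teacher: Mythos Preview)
The paper does not supply its own proof of this proposition: it is quoted as a known result with a citation to Bobkov and Ledoux (1997), followed only by the remark that the constant $4$ cannot be improved. So there is no ``paper's proof'' to compare against.

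Your argument is correct. The one-dimensional step is the classical integration-by-parts/Cauchy--Schwarz derivation of the sharp Poincar\'e constant for the exponential law, and the passage to $\mathbb{R}^n$ via the Efron--Stein sub-additivity of variance is the standard tensorization. The regularity caveats you flag (tail decay for the boundary term, square-integrability of the slices) are exactly the right ones, and the approximation you sketch handles them. This is in fact a more elementary route than the one in the cited Bobkov--Ledoux paper, which obtains the Poincar\'e inequality as a byproduct of their modified logarithmic Sobolev inequality for the exponential measure; your direct argument avoids that detour at the price of yielding only the Poincar\'e inequality and not the stronger entropy bound.
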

\begin{rem}
The constant $4$  can not be improved. 
\end{rem}

The next corollary is stated in order to point the relevance of this
Poincar\'e  inequality to the analysis of general order statistics and
their functionals. 
Recall that the \emph{hazard rate} of an absolutely continuous probability distribution with distribution $F$ is: $h=f/\overline{F}$ where $f$ and $\overline{F}=1-F$ are the density and the survival function associated with $F$, respectively.

\begin{cor} \label{var:os}
Assume the distribution of $X$ has a positive density, then the $k$th order statistic $X_{(k)}$ satisfies
   \begin{displaymath}
      \var(X_{(k)})  \leq C \sum_{i=k}^n \frac{1}{i^2}\EXP \left[ \frac{1}{h(X_{(k)})^2} \right] \le \frac{C}{k} \left(1 +\frac{1}{k}\right)\EXP \left[ \frac{1}{h(X_{(k)})^2} \right]  
    \end{displaymath}
where $C$ can be chosen as $4$.
\end{cor}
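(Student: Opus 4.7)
The plan is to express $X_{(k)}$ as a smooth function of the independent standard exponentials supplied by R\'enyi's representation and to apply Proposition \ref{poincare:expo} directly.

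\textbf{Step 1: Representation as a function of exponentials.} Using the quantile transform, $X_{(k)} \stackrel{d}{=} U(\mathe^{Y_{(k)}})$. By R\'enyi's representation, $Y_{(k)} = \sum_{j=k}^n E_j/j$ for independent standard exponentials $E_1,\ldots,E_n$, so $X_{(k)} \stackrel{d}{=} g(E_1,\ldots,E_n)$ with
\begin{displaymath}
g(e_1,\ldots,e_n) \;=\; U\!\left(\exp\!\left(\sum_{j=k}^n \frac{e_j}{j}\right)\right),
\end{displaymath}
a function depending only on the coordinates of index $\geq k$. Since $F$ has a positive density $f$, the quantile function $F^\leftarrow$ is differentiable on $(0,1)$ with derivative $1/(f\circ F^\leftarrow)$, so $U(t)=F^\leftarrow(1-1/t)$ is differentiable for $t>1$ with $U'(t)=1/(t^2 f(F^\leftarrow(1-1/t)))$ and in particular $g$ is differentiable.

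\textbf{Step 2: Gradient computation.} For $j\geq k$, the chain rule gives
\begin{displaymath}
\frac{\partial g}{\partial e_j}(E) \;=\; \frac{1}{j}\,\mathe^{Y_{(k)}}\,U'\!\bigl(\mathe^{Y_{(k)}}\bigr).
\end{displaymath}
Setting $t=\mathe^{Y_{(k)}}$ so that $1/t = \overline{F}(X_{(k)})$ and $F^\leftarrow(1-1/t)=X_{(k)}$, the identity for $U'$ yields
\begin{displaymath}
\mathe^{Y_{(k)}}\,U'\!\bigl(\mathe^{Y_{(k)}}\bigr) \;=\; \frac{\overline{F}(X_{(k)})}{f(X_{(k)})} \;=\; \frac{1}{h(X_{(k)})}.
\end{displaymath}
Consequently $\partial g/\partial e_j = 1/(j\,h(X_{(k)}))$ for $j\geq k$ and $0$ otherwise, so
\begin{displaymath}
\|\nabla g(E)\|^2 \;=\; \frac{1}{h(X_{(k)})^2}\sum_{j=k}^n \frac{1}{j^2}.
\end{displaymath}

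\textbf{Step 3: Apply the Poincar\'e inequality and bound the harmonic sum.} Proposition \ref{poincare:expo} gives
\begin{displaymath}
\var(X_{(k)}) \;\leq\; 4\,\EXP\bigl[\|\nabla g(E)\|^2\bigr] \;=\; 4\sum_{j=k}^n \frac{1}{j^2}\,\EXP\!\left[\frac{1}{h(X_{(k)})^2}\right],
\end{displaymath}
which is the first inequality. The second follows from
\begin{displaymath}
\sum_{j=k}^n \frac{1}{j^2} \;\leq\; \frac{1}{k^2} + \int_k^\infty \frac{\mathd t}{t^2} \;=\; \frac{1}{k}\left(1+\frac{1}{k}\right).
\end{displaymath}

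\textbf{Main obstacle.} Conceptually the argument is direct; the only genuine care required is checking that the function $g$ is smooth enough for Proposition \ref{poincare:expo} to apply, which is ensured by the positive-density assumption (so that $U$ is $C^1$ on $(1,\infty)$) and by correctly identifying $\mathe^{Y_{(k)}}U'(\mathe^{Y_{(k)}})$ with $1/h(X_{(k)})$, the reciprocal hazard rate evaluated at the order statistic itself.
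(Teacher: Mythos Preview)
Your proof is correct and follows exactly the approach the paper intends: the paper does not spell out a separate proof of this corollary, but the gradient computation you perform is precisely the one carried out in the appendix for Corollary~\ref{prp:hazard:conc:ineg}, namely writing $X_{(k)}=(U\circ\exp)(\sum_{i=k}^n E_i/i)$, identifying $\|\nabla g\|^2=\sum_{i=k}^n i^{-2}/h(X_{(k)})^2$, and then invoking the Poincar\'e inequality of Proposition~\ref{poincare:expo}. Your identification $\mathe^{Y_{(k)}}U'(\mathe^{Y_{(k)}})=1/h(X_{(k)})$ and the elementary bound $\sum_{j\ge k} j^{-2}\le k^{-1}(1+k^{-1})$ are both correct.
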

\begin{rem}
By Smirnov's Lemma \citep{dehaanferreira2006}, $C$ can not be smaller than $1$. If the distribution of $X$ has a non-decreasing hazard rate, the factor of $4$ can be improved into a factor $2$ \citep{boucheronthomas2012}. 
\end{rem}

\citet{Tal91, Mau91, BoLe97} show that smooth functions of independent exponential random variables satisfy Bernstein type concentration inequalities. The next result is extracted from the derivation of Talagrand's concentration phenomenon for product of exponential random variables in \citep{BoLe97}.
  
The definition of sub-gamma random variables will be used in the formulation of the theorem and in many arguments. 

\begin{dfn}\label{dfn:sub-gamma}
A real-valued centred random variable $X$ is said to be \emph{sub-gamma} on the right tail with variance factor $v$ and scale parameter $c$ if
\begin{displaymath}
  \ln \EXP \mathe^{\lambda X} \leq\frac{\lambda^2v}{2(1-c\lambda) } \text{ for every }\lambda\quad \mbox{such that} \quad
  0<\lambda<1/c \, .
\end{displaymath}
We denote the collection of such random variables by $\Gamma_+(v,c)$. Similarly, $X$ is said to be {sub-gamma on the
left tail with variance factor $v$ and scale parameter $c$} if $-X$ is sub-gamma on the right tail with variance factor $v$ and
tail parameter $c$. We denote the collection of such random variables by $\Gamma_{-}(v,c)$ and $\Gamma_{+}(v,c)\cap \Gamma_-(v,c)$ by $\Gamma_\pm(v,c)$.
\end{dfn}

If $X -\EXP X\in  \Gamma_+(v,c)$, then for all $\delta\in (0,1)$,  with probability larger than $1-\delta,$
\begin{displaymath}
X \leq \EXP X + \sqrt{2v \ln \left(1/\delta \right)} + c \ln \left(1/\delta \right) \, . 
\end{displaymath}

The entropy of a non-negative random variable $X$ is defined by $\ent [X]= \esp [X \ln X]-\esp X \ln\esp X$. 

\begin{thm}%[\cite{Tal91},\cite{Mau91}]
\label{bernstein:expo}
 Assume that $g$ is a differentiable function on $\mathbb{R}^n$  with $\max_i |\partial_i g| < \infty$. Let $Z=g(E_1, \ldots,E_n)$ where $E_1,\ldots, E_n$ are $n$ independent standard exponential random variables and $c<1$. Then, for all $\lambda$ such that $0\leq \lambda \max_i |\partial_i g|  \leq c$,
\begin{displaymath}
\ent \left[ \mathe^{\lambda (Z-\EXP Z)}\right]  \leq \frac{2\lambda^2}{1-c} \EXP \left[ \mathe^{\lambda (Z-\EXP Z)} \| \nabla g\|^2\right] \, .
\end{displaymath}

Let $v$ be the essential supremum of $\| \nabla g\|^2$, then $Z$ is sub-gamma on both tails with variance factor $4v$ and scale factor $\max_i |\partial_i g|$.
\end{thm}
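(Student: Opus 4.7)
The plan is to follow the standard entropy-method recipe in three steps: (i) a one-dimensional modified logarithmic Sobolev inequality for a standard exponential variable, (ii) tensorisation of entropy to lift it to $(E_1,\ldots,E_n)$, and (iii) the Herbst differential argument to convert the entropy bound into a sub-gamma tail bound.

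The one-dimensional core, due to Bobkov and Ledoux, states that for a smooth function $h:[0,\infty)\to\mathbb{R}$ with $\sup|h'|\leq c<1$ and $E$ standard exponential,
\begin{equation*}
\ent\bigl[\mathe^{h(E)}\bigr]\leq \frac{2}{1-c}\,\EXP\bigl[h'(E)^2 \mathe^{h(E)}\bigr].
\end{equation*}
Its derivation proceeds via integration by parts against the exponential density and a Cauchy--Schwarz step; the constraint $c<1$ and the factor $1/(1-c)$ appear because the boundary/drift term produced by integration by parts is of the form $\EXP[h'(E)\mathe^{h(E)}]$, which has to be absorbed into the right-hand side at the cost of the Lipschitz constant $c$. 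This is the technical heart of the theorem and the step I expect to be the most delicate to pin down with the sharp constant.

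The entropy inequality in the statement is then obtained by tensorisation: for independent $E_1,\ldots,E_n$ and a non-negative function $F$,
\begin{equation*}
\ent[F]\leq \sum_{i=1}^n \EXP\bigl[\ent_i[F]\bigr],
\end{equation*}
where $\ent_i$ denotes entropy taken with respect to the $i$-th coordinate only, the other coordinates being frozen. Applied with $F=\mathe^{\lambda Z}$, and invoking the one-dimensional inequality conditionally along each coordinate section $x_i\mapsto \lambda g(\ldots,x_i,\ldots)$ (the hypothesis $\lambda\max_i|\partial_i g|\leq c<1$ provides the required uniform Lipschitz bound), the right-hand side becomes $\tfrac{2\lambda^2}{1-c}\EXP\bigl[\|\nabla g\|^2 \mathe^{\lambda Z}\bigr]$. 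Recentering by $\EXP Z$ does not affect $\nabla g$, so the same inequality holds with $Z$ replaced by $Z-\EXP Z$, which is precisely the first claim.

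For the sub-gamma bound, I would run Herbst's argument on $\psi(\lambda)=\ln\EXP\bigl[\mathe^{\lambda(Z-\EXP Z)}\bigr]$. The identity $\ent\bigl[\mathe^{\lambda(Z-\EXP Z)}\bigr]=\mathe^{\psi(\lambda)}\bigl(\lambda\psi'(\lambda)-\psi(\lambda)\bigr)$, combined with the almost-sure bound $\|\nabla g\|^2\leq v$, yields the differential inequality
\begin{equation*}
\lambda\psi'(\lambda)-\psi(\lambda)\leq \frac{2v\lambda^2}{1-c},
\end{equation*}
valid on $[0,c/M]$ with $M=\max_i|\partial_i g|$. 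Fixing $\lambda_0\in(0,1/M)$ and choosing $c=\lambda_0 M$, the inequality holds throughout $[0,\lambda_0]$. Recognising the left-hand side as $\lambda^2(\psi(\lambda)/\lambda)'$ and integrating with the initial conditions $\psi(0)=\psi'(0)=0$ (the latter because $Z-\EXP Z$ is centred) gives $\psi(\lambda_0)\leq 2v\lambda_0^2/(1-\lambda_0 M)=\lambda_0^2(4v)/\bigl(2(1-M\lambda_0)\bigr)$, which is exactly the $\Gamma_+(4v,M)$ property. The left-tail estimate follows by applying the entire argument to $-g$, whose coordinate partial derivatives have identical absolute values. Once the Bobkov--Ledoux step is in place, tensorisation and Herbst are essentially automatic.
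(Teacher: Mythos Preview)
The paper does not supply its own proof of this theorem: it is quoted as a known result, ``extracted from the derivation of Talagrand's concentration phenomenon for product of exponential random variables'' in Bobkov--Ledoux (1997), with Ledoux's monograph cited for a full exposition. So there is no in-paper proof to compare against.

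Your sketch is the standard entropy-method derivation and is correct in outline: the Bobkov--Ledoux one-dimensional modified log-Sobolev inequality for the exponential law, tensorisation of entropy, and then Herbst's differential argument. The trick of fixing $\lambda_0\in(0,1/M)$ first and then taking $c=\lambda_0 M$ so that the differential inequality holds uniformly on $[0,\lambda_0]$ is exactly what is needed to land on the sub-gamma form with variance factor $4v$ and scale $M$. One minor caveat: your description of the one-dimensional step (``integration by parts and Cauchy--Schwarz'') is a bit telegraphic; the actual Bobkov--Ledoux argument requires a little more care to produce the precise constant $2/(1-c)$, but the ingredients you name are the right ones.
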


Again, we illustrate the relevance of these versatile tools on the analysis of general order statistics. This general theorem 
implies that  if the sampling distribution has non-decreasing hazard rate, then the order statistics $X_{(k)}$ satisfy  Bernstein type inequalities \citep[see][Section 2.8]{BoLuMa13} with variance factor $4/k\EXP \left[ {1}/{h(X_{(k)})^2} \right]$ (the Poincar\'e estimate of  variance) and scale parameter $(\sup_x 1/h(x))/k$). Starting back from the Efron-Stein-Steele inequality, the authors derived a somewhat sharper inequality \citep{boucheronthomas2012}.  

{\sloppy
\begin{cor}\label{prp:hazard:conc:ineg}
Assume the distribution function $F$ has non-decreasing hazard rate $h$ that is, $U \circ\ \exp$ is $C^1$ and concave. Let  $Z= g(E_1,\ldots, E_n)=\left(U\circ\ \exp\right)\left( \sum_{i=k}^n E_i/i\right)$ be distributed as the $k$th order statistic
of a sample distributed according to $F$. Then, $Z$ is sub-gamma on both tails with variance factor $4/k\left(1+1/k\right) \EXP [1/h(Z)^2]$
and scale factor $1/(k \inf_x h(x))$. 
\end{cor}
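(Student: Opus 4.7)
The strategy is to apply Theorem~\ref{bernstein:expo} to the map $g(e_1,\ldots,e_n)=(U\circ\exp)\bigl(\sum_{i=k}^n e_i/i\bigr)$ and then to sharpen the sup-norm appearing in the variance factor into an expectation by exploiting the monotonicity built into the problem.

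First I would compute $\nabla g$ explicitly. The identity $\overline{F}(U(t))=1/t$ differentiates to $U'(t)=1/(t\,h(U(t)))$, and the chain rule then gives $(U\circ\exp)'(y)=1/h(U(\mathe^y))$. Hence $\partial_i g=0$ for $i<k$ and $\partial_i g=1/(i\,h(Z))$ for $i\ge k$, so that
\begin{displaymath}
\|\nabla g\|^2 \;=\; \frac{1}{h(Z)^2}\sum_{i=k}^n\frac{1}{i^2}\;\le\;\frac{1+1/k}{k\,h(Z)^2},\qquad \max_i|\partial_i g|\;=\;\frac{1}{k\,h(Z)}\;\le\;\frac{1}{k\inf_x h(x)},
\end{displaymath}
using the elementary comparison $\sum_{i=k}^n i^{-2}\le k^{-1}(1+1/k)$ and the non-decreasingness of $h$. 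The deterministic bound on $\max_i|\partial_i g|$ already pins down the scale parameter in the claim.

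Second I would treat the upper tail. Plugging the gradient into the entropy inequality of Theorem~\ref{bernstein:expo} with $c=\lambda/(k\inf h)<1$ yields
\begin{displaymath}
\ent\bigl[\mathe^{\lambda(Z-\EXP Z)}\bigr]\;\le\;\frac{2\lambda^2}{1-\lambda/(k\inf h)}\,\EXP\!\Bigl[\mathe^{\lambda(Z-\EXP Z)}\|\nabla g\|^2\Bigr].
\end{displaymath}
The key observation is that both $\mathe^{\lambda(Z-\EXP Z)}$ and $\|\nabla g\|^2$ are deterministic functions of $Z$ alone, monotone in opposite directions ($\mathe^{\lambda(Z-\EXP Z)}$ is non-decreasing for $\lambda\ge 0$, and $\|\nabla g\|^2$ is non-increasing because $h$ is non-decreasing). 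Chebyshev's association inequality therefore gives
\begin{displaymath}
\EXP\!\bigl[\mathe^{\lambda(Z-\EXP Z)}\|\nabla g\|^2\bigr]\;\le\;\EXP\!\bigl[\mathe^{\lambda(Z-\EXP Z)}\bigr]\,\EXP\!\bigl[\|\nabla g\|^2\bigr].
\end{displaymath}
Setting $G(\lambda)=\lambda^{-1}\ln\EXP[\mathe^{\lambda(Z-\EXP Z)}]$, for which $G'(\lambda)=\ent[\mathe^{\lambda(Z-\EXP Z)}]/\bigl(\lambda^2\EXP[\mathe^{\lambda(Z-\EXP Z)}]\bigr)$ and $G(0^+)=0$, integrating the resulting differential inequality gives a sub-gamma control of the upper tail with variance factor $4\EXP[\|\nabla g\|^2]\le (4/k)(1+1/k)\EXP[1/h(Z)^2]$ and scale factor $1/(k\inf h)$.

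The main obstacle is the lower tail, because the Chebyshev step reverses direction: when one applies the entropy inequality to $-Z$, the factors $\mathe^{-\lambda(Z-\EXP Z)}$ and $\|\nabla g\|^2$ become both non-increasing in $Z$, so they are positively associated and Chebyshev only yields a lower bound on the product expectation, which is useless. To bypass this, I would invoke the sharper Efron--Stein based inequality proved in \citep{boucheronthomas2012}, which is specifically designed for concave functions of sums of independent exponentials. It crucially uses the concavity of $U\circ\exp$ (equivalent to the non-decreasing hazard rate assumption) to control the negative-$\lambda$ moment generating function by $\EXP[\|\nabla g\|^2]$ rather than by $\|\nabla g\|^2_\infty$, thereby matching the same sub-gamma parameters on the lower tail as on the upper tail.
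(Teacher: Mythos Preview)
Your upper-tail argument is exactly the paper's: compute $\partial_i g = 1/(i\,h(Z))$ for $i\ge k$, feed $\|\nabla g\|^2$ into the entropy inequality of Theorem~\ref{bernstein:expo}, decouple $\mathe^{\lambda(Z-\EXP Z)}$ from $1/h(Z)^2$ via Chebyshev's negative-association inequality (opposite monotonicity in $Z$ for $\lambda>0$), and integrate the Herbst differential inequality for $G(\lambda)=\lambda^{-1}\ln\EXP\mathe^{\lambda(Z-\EXP Z)}$. The paper's appendix proof stops there: it only treats $0\le\lambda\le c\,k\inf_x h(x)$ and then declares that ``this differential inequality \ldots\ leads to the corollary''.

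Your diagnosis of the lower tail is therefore sharper than what the paper writes down. You correctly observe that for $-Z$ the two factors $\mathe^{-\lambda(Z-\EXP Z)}$ and $1/h(Z)^2$ are co-monotone, so Chebyshev goes the wrong way and the same decoupling does not yield the variance factor $\tfrac{4}{k}(1+\tfrac{1}{k})\EXP[1/h(Z)^2]$ on the left. The paper is simply silent on this point. Your proposed fix --- importing the Efron--Stein-based inequality of \citep{boucheronthomas2012} --- is the natural place to look, and the paper itself points to that reference in the sentence immediately preceding the corollary; but this amounts to outsourcing the left-tail claim rather than deriving it from Theorem~\ref{bernstein:expo}. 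Within the paper's framework alone, the only left-tail bound available from Theorem~\ref{bernstein:expo} is the sup-norm one, giving variance factor $\tfrac{4}{k}(1+\tfrac{1}{k})\sup_x 1/h(x)^2$ rather than the expectation. So your reading is correct: the ``both tails'' assertion with the \emph{stated} variance factor does appear to lean on the external reference, and your proposal makes this dependency explicit where the paper leaves it implicit.
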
}

This corollary describes in which way central, intermediate and extreme order statistics can be portrayed as smooth functions of independent exponential random variables. This possibility should not be taken for granted as it is non trivial to capture in  a non-asymptotic way the tail behaviour of maxima of independent Gaussians \citep{Led01,boucheronthomas2012,Cha14}. In the next section, we show in which way the Hill estimator can fit into this picture. 

\section{Main results}

In this section, the sampling distribution $F$ is assumed to belong to $\textsf{MDA}(\gamma)$ with  $\gamma>0$ and to  satisfy the von Mises condition (Definition \ref{dfn:vmises:cond}) with bounded von Mises function $\eta$. %%  and $\overline{\eta}$. 

\subsection{Variance and concentration inequalities for the Hill estimators}
 %\label{sec:bound-vari-hill}
 \label{sec:conc-ineq-hill}
 It is well known that, under the von Mises condition, if $(k_n)$   is an intermediate sequence, the sequence 
$\sqrt{k_n} \left(\widehat{\gamma}(k_n)- \EXP \widehat{\gamma}(k_n) \right)$ converges in distribution
towards $\mathcal{N}(0,\gamma^2)$,  suggesting that the variance of $\widehat{\gamma}(k_n)$ scales like 
$\gamma^2/k_n$  \citep[see][]{gelukdehaanresnickstarica1997,BeiGoeTeuSeg04,dehaanferreira2006,Res07a}.

% In this subsection, we use Representation \eqref{rep:hill:var}: 
% \begin{displaymath}
% \widehat{\gamma}(k) \dis \frac{1}{k} \sum_{i=1}^k \int_{0}^{E_i} \left(\gamma +\eta(\mathe^{u+Y_{(k+1)}})\right) \mathrm{d}u  
% \end{displaymath}

Proposition \ref{prop:bound-vari-hill} provides us with handy non-asymptotic bounds 
on $ \var[ \widehat{\gamma}(k)]  - \gamma^2/k $ using the von Mises function.
 
 \begin{prp}\label{prop:bound-vari-hill}
 Let $\widehat{\gamma}(k)$ be the Hill estimator computed from 
the $(k+1)$ largest order statistics of an $n$-sample from $F$. Then, 
    \begin{displaymath}
      -
    \frac{2\gamma}{k} \EXP \left[
      \overline{\eta}\left(\mathe^{Y_{(k+1)}} \right)\right] \leq 
\var[ \widehat{\gamma}(k)]  - \frac{\gamma^2}{k} 
\leq 
 \frac{2\gamma}{k} \EXP \left[ \overline{\eta}\left(\mathe^{Y_{(k+1)}} \right)\right] 
+ \frac{5}{k}\EXP\left[ \overline{\eta}\left(\mathe^{Y_{(k+1)}} \right)^2\right]  \, .
    \end{displaymath}
\end{prp}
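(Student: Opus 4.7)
My plan starts from the distributional representation (\ref{rep:hill:var}), which splits the Hill estimator cleanly as
\begin{displaymath}
\widehat{\gamma}(k)\dis\gamma\bar{E}_k+B_k,\qquad \bar{E}_k:=\frac{1}{k}\sum_{i=1}^kE_i,\qquad B_k:=\frac{1}{k}\sum_{i=1}^k\int_0^{E_i}\eta\bigl(\mathe^{u+Y_{(k+1)}}\bigr)\mathd u,
\end{displaymath}
with $E_1,\ldots,E_k$ independent of $Y_{(k+1)}$. Since $\var(\gamma\bar{E}_k)=\gamma^2/k$, expanding yields $\var(\widehat{\gamma}(k))-\gamma^2/k=2\gamma\cov(\bar{E}_k,B_k)+\var(B_k)$, so the task reduces to controlling the covariance and $\var(B_k)$.

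The covariance is handled by conditioning on $Y_{(k+1)}$: the $E_i$ being i.i.d.\ kills off-diagonal contributions, and a Fubini computation based on $\int_u^\infty t\mathe^{-t}\mathd t=(u+1)\mathe^{-u}$ gives $\EXP[E_1\int_0^{E_1}\eta(\mathe^{u+y})\mathd u]=b(\mathe^y)+\int_0^\infty u\mathe^{-u}\eta(\mathe^{u+y})\mathd u$. Combined with the bias identity $\EXP[B_k\mid Y_{(k+1)}]=b(\mathe^{Y_{(k+1)}})$ stated before (\ref{cond:bias}), this produces the clean formula
\begin{displaymath}
\cov(\bar{E}_k,B_k)=\frac{1}{k}\,\EXP\Bigl[\int_0^\infty u\mathe^{-u}\eta\bigl(\mathe^{u+Y_{(k+1)}}\bigr)\mathd u\Bigr],
\end{displaymath}
whose absolute value is at most $k^{-1}\EXP[\overline{\eta}(\mathe^{Y_{(k+1)}})]$ via $|\eta|\le\overline{\eta}$ (monotonicity) and $\int_0^\infty u\mathe^{-u}\mathd u=1$. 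Because $\var(B_k)\ge 0$, this alone yields the lower bound of the proposition; on the upper side it supplies the $(2\gamma/k)\EXP[\overline{\eta}(\mathe^{Y_{(k+1)}})]$ term.

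For $\var(B_k)$ I would use the law of total variance $\var(B_k)=\EXP[\var(B_k\mid Y_{(k+1)})]+\var(b(\mathe^{Y_{(k+1)}}))$. The conditional part is controlled by noting that, given $Y_{(k+1)}=y$, $B_k$ is the average of $k$ i.i.d.\ terms each bounded in absolute value by $E_i\overline{\eta}(\mathe^y)$ with $\EXP[E_i^2]=2$, giving $\EXP[\var(B_k\mid Y_{(k+1)})]\le(2/k)\EXP[\overline{\eta}^2]$. For the second term I would apply the Poincar\'e inequality for exponentials (Proposition \ref{poincare:expo}) to $b(\exp(\sum_{j>k}E_j/j))$ viewed as a function of $(E_{k+1},\ldots,E_n)$: the identity $tb'(t)=b(t)-\eta(t)$ noted right after (\ref{cond:bias}), combined with $|b|,|\eta|\le\overline{\eta}$, yields $|b'(\mathe^y)\mathe^y|\le 2\overline{\eta}(\mathe^y)$, hence $|\partial_j\cdot|\le 2\overline{\eta}(\mathe^{Y_{(k+1)}})/j$, and $\sum_{j>k}j^{-2}\le 1/k$ produces a contribution of order $k^{-1}\EXP[\overline{\eta}^2]$.

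The main obstacle is to squeeze the numerical constant in the second moment term down to the announced $5/k$: the naive combination above (Poincar\'e has sharp constant $4$ for exponentials) overshoots. To reach $5$, I would instead apply the Efron--Stein inequality in symmetric (jackknife) form to $B_k$ as a function of all of $E_1,\ldots,E_n$, exploiting that the increment obtained by replacing $E_i$ ($i\le k$) by an i.i.d.\ copy is bounded in absolute value by $|E_i-E_i'|\overline{\eta}(\mathe^{Y_{(k+1)}})/k$, while the one obtained by replacing $E_j$ ($j>k$) is bounded by $2|E_j-E_j'|\overline{\eta}(\mathe^{Y_{(k+1)}^{-j}})/j$ after writing $\int_0^{E_i}\eta(\mathe^{u+Y})\mathd u$ as $F(Y+E_i)-F(Y)$ for a primitive $F$ of $\eta(\mathe^\cdot)$; summation gives $2/k$ for the first set and at most $4\sum_{j>k}j^{-2}\le 4/k$ for the second after integration over $(E_j,E_j')$, for a total of $5/k$.
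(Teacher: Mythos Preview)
Your overall strategy is the paper's: write $\widehat{\gamma}(k)=\gamma\bar E_k+B_k$, handle the cross term by conditioning on $Y_{(k+1)}$, and bound $\var(B_k)$ by the law of total variance plus a Poincar\'e step on $b(\mathe^{Y_{(k+1)}})$. Your explicit covariance formula $k^{-1}\EXP\bigl[\int_0^\infty u\mathe^{-u}\eta(\mathe^{u+Y_{(k+1)}})\,\mathd u\bigr]$ is a nice refinement of the paper's Cauchy--Schwarz argument, and it delivers the same bound $k^{-1}\EXP[\overline\eta]$; the lower bound and the $2\gamma/k$ contribution to the upper bound follow exactly as you describe.

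The gap is in your Efron--Stein fix. When you perturb $E_j$ for $j>k$, the order statistic $Y_{(k+1)}$ itself moves, and the increment bound $2|E_j-E_j'|\overline\eta(\mathe^{Y_{(k+1)}^{-j}})/j$ necessarily evaluates $\overline\eta$ at a point no larger than $Y_{(k+1)}$ (for instance $\sum_{i>k,\,i\ne j}E_i/i$ if you want independence from $(E_j,E_j')$). Since $\overline\eta$ is non-increasing, $\overline\eta$ at that shifted point is \emph{larger} than $\overline\eta(\mathe^{Y_{(k+1)}})$, and after squaring and taking expectations you do not land on a multiple of $\EXP[\overline\eta(\mathe^{Y_{(k+1)}})^2]$, which is the quantity in the statement. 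Your arithmetic also slips: with the symmetric Efron--Stein factor $\tfrac12$ and $\EXP[(E-E')^2]=2$, the two blocks give $1/k$ and $4/k$, not $2/k$ and $4/k$.

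The paper avoids this by staying with Poincar\'e for $\var\bigl(\EXP[\widehat\gamma(k)\mid Y_{(k+1)}]\bigr)$: there the gradient is evaluated at the \emph{unperturbed} $Y_{(k+1)}$, so the bound is automatically in terms of $\EXP[\overline\eta(\mathe^{Y_{(k+1)}})^2]$, and it records the outcome as $(4/k)\EXP[\overline\eta^2]$. On the conditional piece it also uses the slightly sharper
\[
\var\Bigl(\int_0^E\eta(\mathe^{u+y})\,\mathd u\Bigr)=\tfrac12\,\EXP\bigl[(g(E)-g(E'))^2\bigr]\le\tfrac12\,\overline\eta(\mathe^y)^2\,\EXP[(E-E')^2]=\overline\eta(\mathe^y)^2,
\]
coming from the $\overline\eta$-Lipschitz property of $g(e)=\int_0^e\eta(\mathe^{u+y})\mathd u$, rather than your second-moment bound $2\overline\eta^2$. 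That $1/k+4/k$ is how the $5/k$ arises.
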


The next  Abelian result might help in appreciating these variance
bounds. 
\begin{prp}
  \label{sec:abel-vari-hill}
Assuming that $\eta$ is $\rho$-regularly varying with $\rho<0$, then, 
%Assume $F$  satisfies the von Mises condition (Definition \ref{dfn:vmises:cond}) with von Mises function $\eta \in \textsf{RV}_\rho, \rho\leq 0,$ then
for any intermediate sequence $(k_n)$,
\begin{displaymath}
\lim_{n\to \infty} \frac{  k_n \var(\widehat{\gamma}(k_n)) -\gamma^2 }{\eta(n/k_n)} = \frac{2\gamma}{(1-\rho)^2} \, . 
\end{displaymath}
\end{prp}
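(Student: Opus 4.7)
The plan is to start from the distributional representation (\ref{rep:hill:var}), apply the total-variance decomposition conditional on $T:=\mathe^{Y_{(k+1)}}$, and then exploit the regular variation of $\eta$ via Karamata/Potter arguments. In (\ref{rep:hill:var}) the variable $T$ is independent of $E_1,\ldots,E_k$. Writing $R_i:=\int_0^{E_i}\eta(T\mathe^u)\mathrm{d}u$ so that $\widehat\gamma(k)=\gamma\bar E_k+\bar R_k$, the formula $\EXP[\widehat\gamma(k)\mid T]=\gamma+b(T)$ recalled just before (\ref{cond:bias}) and the total-variance decomposition yield
\[
k\var(\widehat\gamma(k))-\gamma^2 \;=\; 2\gamma\,\EXP\!\left[\cov(E_1,R_1\mid T)\right] \;+\; \EXP\!\left[\var(R_1\mid T)\right] \;+\; k\var(b(T))\, .
\]
I will show that the first term is asymptotically equivalent to $2\gamma\eta(n/k_n)/(1-\rho)^2$ and that the other two are $o(\eta(n/k_n))$.

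The leading term is explicit. A Fubini computation gives $\EXP[E_1 R_1\mid T]=\int_0^\infty(u+1)\mathe^{-u}\eta(T\mathe^u)\mathrm{d}u$, while $\EXP[R_1\mid T]=b(T)=\int_0^\infty \mathe^{-u}\eta(T\mathe^u)\mathrm{d}u$, hence (after $v=\mathe^u$)
\[
\cov(E_1,R_1\mid T) \;=\; \int_0^\infty u\,\mathe^{-u}\eta(T\mathe^u)\mathrm{d}u \;=\; \int_1^\infty\frac{\ln v}{v^2}\eta(Tv)\mathrm{d}v\, .
\]
Regular variation of $\eta$ and Potter bounds give the pointwise asymptotic $\cov(E_1,R_1\mid T)\sim \eta(T)\int_1^\infty v^{\rho-2}\ln v\,\mathrm{d}v=\eta(T)/(1-\rho)^2$ as $T\to\infty$. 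Since $Y_{(k_n+1)}-\ln(n/k_n)\to 0$ in probability (by R\'enyi's representation, $\var(Y_{(k_n+1)})=\sum_{j=k_n+1}^n j^{-2}\sim 1/k_n$), this pointwise equivalence transfers to $\EXP[\cov(E_1,R_1\mid T)]\sim \eta(n/k_n)/(1-\rho)^2$, contributing the claimed limit $2\gamma/(1-\rho)^2$.

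The two remaining terms are negligible. The crude bound $|R_1|\leq E_1\overline\eta(T)$ gives $\EXP[\var(R_1\mid T)]\leq 2\EXP[\overline\eta(T)^2]$, and the transfer argument applied to $\eta^2$ (which is $2\rho$-regularly varying) yields $\EXP[\overline\eta(T)^2]=O(\eta(n/k_n)^2)=o(\eta(n/k_n))$. For $k\var(b(T))$, the identity $b'(t)=(b(t)-\eta(t))/t$ combined with $b(t)\sim \eta(t)/(1-\rho)$ gives $b'(t)=O(\eta(t)/t)$; then Proposition \ref{poincare:expo} applied to the smooth function $(e_{k+1},\ldots,e_n)\mapsto b(\exp(\sum_{j\geq k+1} e_j/j))$ together with $\var(Y_{(k+1)})\sim 1/k$ yields $\var(b(T))=O(\eta(n/k_n)^2/k)$, so $k\var(b(T))=o(\eta(n/k_n))$.

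The principal obstacle is the passage-to-expectation step: justifying uniform integrability of $\eta(T)/\eta(n/k_n)$ (and of $\eta(T)^2/\eta(n/k_n)^2$) so that the pointwise Karamata asymptotic lifts under $\EXP$. Potter's inequalities bound $\eta(Tv)/\eta(T)$ by $(1\pm\varepsilon)v^{\rho\pm\varepsilon}$ uniformly on $[v_0,\infty)$ for $T$ large, which, combined with exponential moment bounds on $Y_{(k_n+1)}-\ln(n/k_n)$ (obtained from R\'enyi's representation as a sum of independent exponential spacings and Bernstein-type control as in Theorem \ref{bernstein:expo}), supplies the required domination on both the bulk and the tails of $T$. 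Once this uniform integrability is in hand, the three contributions above assemble into the announced limit.
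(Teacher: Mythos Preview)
Your proposal is correct and follows essentially the same route as the paper's proof: both start from representation \eqref{rep:hill:var}, apply the conditional variance decomposition with respect to $T=\mathe^{Y_{(k_n+1)}}$, identify the covariance term $2\gamma\,\EXP[\cov(E,R\mid T)]$ as the one carrying the limit $2\gamma/(1-\rho)^2$, bound the conditional-variance-of-$R$ term and the $k_n\var(b(T))$ term as $o(\eta(n/k_n))$ (the latter via the Poincar\'e inequality), and isolate the uniform-integrability step (Potter bounds combined with sub-gamma control of $W_n=Y_{(k_n+1)}-\ln(n/k_n)$) as the main technical point. Your write-up is slightly more streamlined---using $b'(t)=(b(t)-\eta(t))/t$ explicitly and the crude bound $|R_1|\le E_1\overline\eta(T)$---but the ideas, the decomposition, and the tools coincide with the paper's argument.
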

We may now move to genuine concentration inequalities for the Hill
estimator. 

%\subsection{Concentration inequalities for the Hill estimators} 

The exponential representation
\eqref{eq:hill:rep} suggests that the rescaled Hill estimator $k \widehat{\gamma}(k)$ should be approximately distributed according to a $\text{gamma}(k,\gamma)$ distribution where $k$ is the shape parameter and $\gamma$ the scale parameter. Therefore, we expect the Hill estimators to satisfy Bernstein type concentration inequalities that is, to be sub-gamma on both tails with variance factors connected to the tail index $\gamma$ and to the von Mises function. Representation \eqref{eq:hill:rep} actually suggests more. Following \citep{MR1632189}, we actually expect the sequence $\big(\sqrt{k}(\widehat{\gamma}(k)-\EXP \widehat{\gamma}(k))\big)_k$ to behave like normalized partial sums of independent square integrable random variables that is, we believe $\max_{2\leq k\leq n} \sqrt{k}(\widehat{\gamma}(k)-\EXP \widehat{\gamma}(k))$ to scale like $\sqrt{\ln \ln n}$ and to be sub-gamma on both tails (see Appendix \ref{sec:calibr-prel-select}). The purpose of this section is to meet these expectations in a non-asymptotic way.  

%In this section, the sampling distribution $F$ is assumed to belong to $\textsf{MDA}(\gamma)$ with  $\gamma>0$ and to  satisfy the von Mises condition (Definition \ref{dfn:vmises:cond}) with von Mises function $\eta$ and $\overline{\eta}$ defined as in Definition\ref{dfn:etabar}. 
Proofs use the Markov property of order statistics: conditionally on the $(J+1)$th order statistic, the first largest
$J$ order statistics are distributed as the order statistics of a sample of size $J$  of the excess distribution. They consist of appropriate invocations of Talagrand's concentration inequality (Theorem \ref{bernstein:expo}). However, this theorem
generally requires a uniform bound on the gradient of the relevant
function. When Hill estimators are analysed   as functions of
independent exponential random  variables, the partial derivatives
depend on the points at which the von Mises function is evaluated. In
order to get interesting bounds, it is worth conditioning on an intermediate order statistic. 

Throughout this subsection, let $\ell$ be an integer larger than $\sqrt{\ln  n}$ and $J$ an integer not larger  than $n$. We denote $E_i, 1 \le i \le n$, $n$ independent standard exponential random variables and we work on the probability space where all $E_i$ are defined, and therefore consider the Hill estimators defined by Representation \eqref{eq:hill:rep}.
  As we use the exponential representation of order statistics, besides Hill estimators, the random variables that appear in the main statements are order statistics of exponential samples. As before,  $Y_{(k)}$ will denote the $k$th order statistic of a standard exponential sample of size $n$ (we agree on $Y_{(n+1)}=0$).

% Recall that for each $1 \le i < n$,  the Hill estimator $\widehat{\gamma}(i)$, based on the $i+1$ largest order statistics, 
% is a (hopefully) smooth function of independent standard exponentially distributed random variables (see Proposition \ref{hill:rep})
% and fits into the scope of Theorem \ref{bernstein:expo}. 

The first theorem provides an exponential refinement of the variance
bound stated in  Proposition~\ref{prop:bound-vari-hill}.  
However, as announced, there is a price to pay:
statements hold conditionally on some order statistic. This is not an impediment to analyse Lepski's rule using this theorem. Indeed, when analysing Lepki's rule it is sufficient to control the Hill process $\left( \sqrt{i} (\widehat{\gamma}(i) - \mathbb{E}[\widehat{\gamma}(i) \mid Y_{(k+1)}])\right)_i$ for indices $i$ ranging between  $\ell_n$ (that should not be smaller than $\ln(n)$) and  some upper bound $k_n$  that achieves a certain balance between bias and standard deviation (the bias of $\widehat{\gamma}(k_n)$ should be of order $r_n$  times the standard deviation, that is approximately $\gamma/\sqrt{k_n}$ where $r_n \approx \sqrt{\ln(\ln(n))}$). The second clause of next theorem is the cornerstone in  the derivation of the risk bounds presented in the next section. 

In the sequel, let 
$$\xi_n= c_1\sqrt{\ln \log_2 n} + c_1',$$
%$$\xi(J)= C\sqrt{\ln \log_2 J} + C',$$ 
where $c_1$ may be chosen not larger than $4$ and $c_1'$ not larger than $34$.
 
\begin{thm}\label{prp:hill:concentration}
Let $T$ be a shorthand for $\exp(Y_{(J+1)})$. For some $k$ such that $c_2\ln n \vee 32\leq \ell \le k\le J$ where $c_2\geq 2$, let
\begin{displaymath}
Z^a = \max_{\ell \leq i\leq k} \sqrt{i} \left| \widehat{\gamma}(i) -\EXP \left[\widehat{\gamma}(i)\mid Y_{(k+1)}\right] \right|\, .
\end{displaymath} 
Then, conditionally on $T$,
\begin{enumerate}[i)]
\item  For $\ell \leq i\leq k$, 
 $$\sqrt{i} \left( \widehat{\gamma}(i) -\EXP[\widehat{\gamma}(i)\mid T]\right) \in  \Gamma_\pm\left(4  \left(\gamma +3\overline{\eta}(T) \right)^2,\left(\gamma+2\overline{\eta}(T)\right)\right) \, . $$ 
\item  Let $u$ be such that $J \overline{\eta}(u)^2 \leq  \gamma^2 r_n^2$   where $r_n=\sqrt{c_3 \ln \ln n }$   with $c_3 = 2$. Assume that $T \geq u$, then 
$$Z^a \in \Gamma_\pm\left(4 \gamma^2(1 + 3 r_n/\sqrt{J})^2,\gamma(1 + 2r_n/\sqrt{J})/ \sqrt{\ell}\right) $$ and 
\begin{equation*}%\label{eq:7}
    \EXP \left[ Z^a \mid T\right] \leq \gamma \xi_n \left(1 + \frac{3r_n}{\sqrt{J}}  \right)\, .
  \end{equation*}
\end{enumerate}
\end{thm}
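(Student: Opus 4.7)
The overall strategy is to express the Hill estimator (and the supremum $Z^a$) as smooth functions of independent standard exponential random variables, conditionally on $T$, and then invoke Talagrand's concentration inequality (Theorem~\ref{bernstein:expo}). The key enabling fact is that by R\'enyi's representation applied above the $(J{+}1)$-th order statistic, given $T=\exp(Y_{(J+1)})$, there exist i.i.d.\ standard exponentials $E_1,\ldots,E_J$ such that $Y_{(i+1)}=\ln T+\sum_{l=i+1}^{J}E_l/l$ for every $i\leq J$, and \eqref{rep:hill:var} expresses $\widehat{\gamma}(i)$ through these $E_l$'s.

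For part~(i), I fix $i\in[\ell,k]$ and apply \eqref{rep:hill:var} with $k$ replaced by $i$. Writing $G(E_1,\ldots,E_J)=\widehat{\gamma}(i)$ conditionally on $T$, the partial derivatives split into two regimes. For $l\leq i$, only the upper limit of the $l$-th integral depends on $E_l$, yielding $|\partial G/\partial E_l|=|\gamma+\eta(\mathrm{e}^{E_l+Y_{(i+1)}})|/i\leq(\gamma+\overline{\eta}(T))/i$ by monotonicity of $\overline{\eta}$ and $Y_{(i+1)}\geq\ln T$. For $i<l\leq J$, $E_l$ enters only through $Y_{(i+1)}$, and the identity $\partial_{y}\!\int_0^{E_j}\eta(\mathrm{e}^{u+y})\,\mathrm{d}u=\eta(\mathrm{e}^{E_j+y})-\eta(\mathrm{e}^{y})$ together with $\partial Y_{(i+1)}/\partial E_l=1/l$ gives $|\partial G/\partial E_l|\leq 2\overline{\eta}(T)/l$. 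Summing with $\sum_{l>i}1/l^2\leq 1/i$, I obtain $\|\nabla \sqrt{i}\,G\|^2\leq(\gamma+\overline{\eta}(T))^2+4\overline{\eta}(T)^2\leq(\gamma+3\overline{\eta}(T))^2$ and $\max_l\sqrt{i}\,|\partial G/\partial E_l|\leq(\gamma+2\overline{\eta}(T))/\sqrt{i}\leq\gamma+2\overline{\eta}(T)$. Applying Theorem~\ref{bernstein:expo} conditionally on $T$ delivers the claimed sub-gamma parameters.

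For the sub-gamma part of (ii), I apply Talagrand to $Z^a$ itself, regarded as a function of $(E_1,\ldots,E_J)$ given $T$. Since $Z^a$ is a pointwise maximum of absolute values of the smooth functions $f_i=\sqrt{i}(\widehat{\gamma}(i)-h_i(Y_{(k+1)}))$ with $h_i(y)=\EXP[\widehat{\gamma}(i)\mid Y_{(k+1)}=y]$, the a.e.\ defined gradient satisfies $|\partial Z^a/\partial E_l|\leq\max_i|\partial f_i/\partial E_l|$, so it suffices to control each $\|\nabla f_i\|$ uniformly. The centering $h_i(Y_{(k+1)})$ depends on $E_l$ only through $Y_{(k+1)}$ (hence for $l>k$); differentiating under the expectation and using $|\eta|\leq\overline{\eta}(T)$ on $\{Y_{(k+1)}\geq\ln T\}$ gives $|h_i'(Y_{(k+1)})|\leq 2\overline{\eta}(T)$, so $|\partial h_i(Y_{(k+1)})/\partial E_l|\leq 2\overline{\eta}(T)/l$ for $l>k$. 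Combining with the gradient computation in (i), repeated use of $\sum_{l>m}1/l^2\leq 1/m$, and the hypothesis $T\geq u$ which forces $\overline{\eta}(T)\leq\gamma r_n/\sqrt{J}$, yields $\|\nabla Z^a\|^2\leq\gamma^2(1+3r_n/\sqrt{J})^2$ and scale factor $\leq\gamma(1+2r_n/\sqrt{J})/\sqrt{\ell}$. Theorem~\ref{bernstein:expo} then gives the $\Gamma_\pm$ statement.

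The main obstacle is the expectation bound $\EXP[Z^a\mid T]\leq\gamma\xi_n(1+3r_n/\sqrt{J})$ with $\xi_n=\Theta(\sqrt{\ln\log_2 n})$. A crude union bound over the $k-\ell+1\leq n$ indices would yield $\sqrt{\ln n}$ rather than $\sqrt{\ln\log_2 n}$, losing exactly the law-of-iterated-logarithm factor. I would therefore use a \textbf{peeling} argument: partition $[\ell,k]$ into $N\leq\lceil\log_2(k/\ell)\rceil\leq\log_2 n$ dyadic blocks $B_j=[2^j\ell,2^{j+1}\ell)\cap[\ell,k]$. On each block $B_j$, the family $(f_i)_{i\in B_j}$ is controlled by a common sub-gamma bound: applying part~(i) (or its version with a maximum over a single block) and exploiting the fact that within $B_j$ the ratio $\sqrt{i/\min B_j}$ is bounded by $\sqrt{2}$, the conditional maximum $M_j=\max_{i\in B_j}|f_i|$ turns out to be sub-gamma with variance factor of order $\gamma^2(1+3r_n/\sqrt{J})^2$ that does \emph{not} grow with $|B_j|$. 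A union bound at the MGF level over the $N$ blocks, followed by the standard maximal inequality $\EXP\max_{j\leq N}M_j\leq\sqrt{2v\ln N}+c\ln N$, produces the $\sqrt{\ln\log_2 n}$ rate and, after absorbing harmless constants, the claimed form $\gamma\xi_n(1+3r_n/\sqrt{J})$. Showing that $M_j$ is sub-gamma with variance independent of $|B_j|$ is the most delicate step, and parallels Doob's maximal inequality for the partial-sum representation $\widehat{\gamma}(i)=(\gamma/i)\sum_{m=1}^i\tilde E_m+B_i$ that underlies the Hill process.
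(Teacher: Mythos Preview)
Your setup conflates two representations: the R\'enyi exponentials $E_l$ satisfy $Y_{(i+1)}=\ln T+\sum_{l>i}E_l/l$, but representation~\eqref{rep:hill:var} uses a \emph{different} family of exponentials (the unordered excesses $Y_{(j)}-Y_{(i+1)}$, $j\le i$), which are specific to each~$i$. Your derivative formula $|\partial G/\partial E_l|=|\gamma+\eta(\mathrm e^{E_l+Y_{(i+1)}})|/i$ for $l\le i$ is correct in the latter coordinates, not in R\'enyi coordinates. For part~(i) this is harmless since $i$ is fixed. For part~(ii), however, all $f_i$ must live in a common coordinate system; there you have no choice but to work in R\'enyi coordinates as in Proposition~\ref{hill:rep}, and the paper does exactly this, computing $\partial(i\widehat\gamma(i)-i'\widehat\gamma(i'))/\partial E_p$ for general $0\le i'<i\le k$. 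The resulting pointwise bounds match yours numerically, so this is a correctable confusion rather than a fatal flaw.

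The substantive gap is in the expectation bound. Your peeling into $N\lesssim\log_2 n$ dyadic blocks is the right first move, and Talagrand does give $M_j-\EXP[M_j\mid T]\in\Gamma_\pm(v,c)$ with $v$ independent of $|B_j|$. But Proposition~\ref{zlapcor} applies to \emph{centred} sub-gamma variables: it yields $\EXP[\max_j M_j\mid T]\le\max_j\EXP[M_j\mid T]+\sqrt{2v\ln N}+c\ln N$, and you still owe a uniform $O(1)$ bound on $\EXP[M_j\mid T]$. This is the heart of the matter, and it does not follow from the sub-gamma property of $M_j$ --- contrary to what your last paragraph suggests. The paper obtains it by \emph{chaining inside each block}: writing $W(i)=i(\widehat\gamma(i)-\EXP[\widehat\gamma(i)\mid Y_{(k+1)}])$ and telescoping $W(i)-W(2^j)$ along the binary expansion of $i$, then applying Proposition~\ref{zlapcor} level by level. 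What makes the chaining sum converge (to a constant times $2^{j/2}$, hence $\EXP[M_j\mid T]=O(1)$ after dividing by $\sqrt{2^j}$) is that the variance factor of the increment $i\widehat\gamma(i)-i'\widehat\gamma(i')$ is proportional to $i-i'$ rather than to $i$. This is precisely why the paper computes gradients for general pairs $(i,i')$, and it is the ingredient your sketch lacks. Your Doob-type alternative is plausible for the pure-Pareto component $\gamma\sum_{m\le i}(E_m-1)$, but the $\eta$-perturbation and the $Y_{(k+1)}$-conditional centring are not martingale increments, so that route would require genuine additional work beyond what you have outlined.
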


\begin{rem}
If $F$  is a pure Pareto distribution with shape parameter $\gamma>0$, then $k\widehat{\gamma}(k)/\gamma$ is distributed 
according to a gamma distribution with shape parameter $k$ and scale parameter $1$. Tight and well-known tail bounds for gamma distributed random variables assert that
\begin{displaymath}
    \mathbb{P}\left\{ \left| \widehat{\gamma}(k)-\EXP\left[
          \widehat{\gamma}(k)\right] \right| \geq
      \frac{\gamma}{\sqrt{k}} \left( \sqrt{2\ln\left(2/\delta \right)}
        +\frac{\ln \left(2/\delta \right)}{\sqrt{k}}\right) \right\} \leq 2
    \delta \, . 
\end{displaymath}
\end{rem}

\begin{rem}
First part of Statement ii)  reads as: conditionally on  $\overline{\eta}(T) \leq r_n \gamma/\sqrt{J}$, with probability larger than $1-\delta$,
  \begin{displaymath}\label{eq:8}
    %\mathbb{P} \left\{ 
    \left|Z^a-\EXP[Z^a  \mid T]\right| \leq  \gamma (1+3r_n /\sqrt{J}) \left( \sqrt{8 \ln \left(2/\delta \right) } +  \frac{\ln \left(2/\delta \right)}{\sqrt{\ell}}\right) \, . 
    %\Big\vert  \overline{\eta}(T) \leq \frac{r \gamma}{\sqrt{J}}\right\} \leq  \delta 
    \, 
  \end{displaymath}
     Combining both parts of Statement ii), we also get that, conditionally on  $\overline{\eta}(T) \leq r_n \gamma/\sqrt{J}$, with probability larger than $1-\delta$, 
\begin{displaymath}\label{eq:6:bis}
  %\mathbb{P} \left\{ 
  Z^a \leq \gamma(1 + 3 r_n/\sqrt{J}) \left( \xi_n+\sqrt{8 \ln \left(2/\delta \right) } +  \frac{\ln \left(2/\delta \right)}{\sqrt{\ell}}\right)  
  %\right\} \leq  \delta 
  \, . 
\end{displaymath}
\end{rem}

\begin{rem}
 The reader may wonder whether resorting to the exponential
 representation and usual Chernoff bounding would not provide a
 simpler argument. The straightforward approach leads to the 
 following conditional bound on the logarithmic moment generating function,
 \begin{eqnarray*}
\lefteqn{   \ln \EXP \left[ \exp\left(\lambda\left( \widehat{\gamma}(k) -\EXP
         [\widehat{\gamma}(k)\mid Y_{(k+1)}] \right) \right) \mid
     Y_{(k+1)}\right]}\\ &  \leq & \frac{\left(\gamma
       +\overline{\eta}(\mathe^{Y_{(k+1)}})\right)^2}{2k\left(1-\lambda(\gamma
       +\overline{\eta}(\mathe^{Y_{(k+1)}})) \right)} +\lambda
   \left(\overline{\eta}(\mathe^{Y_{(k+1)}})-b\left(
       \mathe^{Y_{(k+1)}}\right) \right) \, .
 \end{eqnarray*}
A similar statement holds for the lower tail. 
This leads to exponential bounds for deviations of the Hill
estimator above $\EXP
         [\widehat{\gamma}(k)\mid Y_{(k+1)}] + \overline{\eta}(\mathe^{Y_{(k+1)}})-b\left(
       \mathe^{Y_{(k+1)}}\right)$ that is, to control deviations of
     the Hill estimator above  its expectation plus a term that may be of the
     order of magnitude of the bias. 

Attempts to rewrite $\widehat{\gamma}(k)-\EXP [\widehat{\gamma}(k) \mid
Y_{(k+1)}]$ as a sum of martingale increments
$\EXP[\widehat{\gamma}(k)\mid Y_{(i)}]-\EXP [\widehat{\gamma}(k) \mid
Y_{(i+1)}]$, for $1\leq i\leq k$, and to exhibit an exponential
supermartingale met the same impediments.  

At the expense of inflating the
     variance factor, Theorem \ref{bernstein:expo} provides a genuine
    (conditional)   concentration inequality  for Hill estimators. As
    we will deal with values of $k$ for which bias exceeds the typical
    order of magnitudes of fluctuations, this is relevant to our
    purpose. 
\end{rem}

\subsection{Adaptive Hill estimation}

\label{sec:adapt-hill-estim}
We are now able to characterise  the  performance of the variant of the selection rule defined by \eqref{eq:rule:dk} \citep{MR1632189} with  $r_n = \sqrt{c_3\ln \ln n }$ where~$c_3=2$. 
Let $\ell_n = \lceil c_2 \ln n\rceil$ where $c_2$ is a  constant to be defined below. 
%% TODO update $c_3$.

The deterministic sequence of  indices $(k_n(r_n))$  is  defined (for $n$ large enough) by
\begin{equation}\label{def:kn}
 k_n (r_n)= \max \left\{ k  \in \{ \ell_n,\ldots, n\} \colon \sqrt{k}\overline{\eta}(n/k^\delta)
   \leq   \gamma r_n\right\} \, ,
\end{equation}
where $k^\delta= k+\sqrt{2k \ln(1/\delta)}+ 2 \ln(1/\delta).$
The sequence  $(k_n(1))_n$  is defined by  choosing $r_n=1$.
% \begin{displaymath}
%  k_n (1)= \min \left\{ k  \in \{ c_3 \ln n,\ldots, n\} \colon  \overline{\eta}(n/k)>  \frac{ \gamma}{\sqrt{k}}\right\}-1 \, . 
% \end{displaymath} 
The deterministic sequences $(k_n(1))$  and $(k_n(r_n))$ achieve specific balances between bias and variance. In full generality, because $\overline{\eta}(t)$ is just an upper bound on the conditional bias $b(t)$,  it is difficult to precisely connect  $(k_n(1))$ and 
$(k_n(r_n))$ with the oracle sequence $(k^*_n)$. We call these two sequences the pivotal sequences. 
In the sequel, $k_n$ stands for $k_n(r_n)$. If the context is not clear, we specify $k_n(1)$ or $\ k_n(r_n)$.

Let $1/(2n) < \delta<1/4$.
Recall, from Section \ref{sec:conc-ineq-hill}, that $\xi_n= c_1 \sqrt{\ln \ln (n)}+ c'_1$ and agree on the shorthands 
\begin{eqnarray*}
	z_\delta &= (1+3 r_n/\sqrt{k_n})\left(\xi_n+
  \sqrt{8 \ln  \left(2/\delta \right)} + \frac{\ln  \left(2/\delta \right)}{\sqrt{\ell_n}}\right)			
\end{eqnarray*}
and $\overline{z}_\delta$ which is defined by replacing $k_n$ by $\ell_n$ in the definition of $z_\delta$ ($\overline{z}_\delta$  depends on $n, \delta$ but  not on the sampling distribution). 
In the sequel, $c_2$ is assumed to be chosen so that $r_n+ \overline{z}_\delta \leq 9\sqrt{c_2 \ln(n)}/10$ for $n>1000$  and $ 2/n< \delta <1/4$ ($c_2$ may be chosen not larger than 100).

 The index $\widehat{k}_n$ is selected according to the following rule:
\begin{displaymath}
  \widehat{k}_n =  \max \left\{ k  \in \{ \ell_n,\ldots, n\}  \text{ and } \forall i \in \{\ell_n,\ldots,n\} \, , \left| \widehat{\gamma}(i)-\widehat{\gamma}(k)\right|\leq \frac{\widehat{\gamma}(i) r_n(\delta) }{\sqrt{i}}\right\} 
\end{displaymath}
where $r_n(\delta)= 10(r_n+\overline{z}_\delta)$. The quantity $r_n(\delta)$ scales like $\sqrt{\ln((2/\delta)\ln(n))}$. 
The tail index estimator is $\widehat{\gamma}(\widehat{k}_n)$.

As tail adaptivity has a price (see Theorem ~\ref{thm-kc-lower-bound}), the ratio between the  risk of the data-driven estimator $\widehat{\gamma}(\widehat{k}_n)$ and the risk of the pivotal index $\widehat{\gamma}( k_n (1))$ cannot be upper bounded by a constant factor, let alone by a factor close to $1$. This is why in the next theorem, we compare the risk of the empirically selected index $\widehat{\gamma}(\widehat{k}_n)$ with the risk of the pivotal index $\widehat{\gamma}(k_n)$.

Recall, from Section \ref{sec:conc-ineq-hill}, that 
\begin{displaymath}
\xi_n = c_1 \sqrt{\ln \log_2 n} +c'_1 \quad \text{with }c_1\leq 4 \text{ and } c'_1\leq 34\, . 
\end{displaymath}

\begin{thm}\label{thm:adapt-hill-estim}
Assume the sampling distribution $F \in\mathsf{MDA}(\gamma), \gamma>0$ satisfies the von Mises condition with bounded
von Mises function $\eta$, and $\overline{\eta}(t)= \sup_{s\geq t} |\eta(s)|.$

Let $n > 1000$  is large enough so that $k_n$ (Definition \ref{def:kn}) is well defined.
Then, for $2/n< \delta< 1/4$, with probability larger than $1-3\delta$, 
\begin{displaymath}
\Big| \gamma -   \widehat{\gamma}(\widehat{k}_n) \Big|    \le \left|
  \gamma - \widehat{\gamma}(k_n)  \right|\left(
  1+\tfrac{r_n(\delta)}{\sqrt{k_n}}\right) + \tfrac{r_n(\delta)
}{\sqrt{k_n}}  \gamma \, ,
\end{displaymath}
and, with probability larger than $1-4\delta$, 
\begin{eqnarray}\label{rem:asym:bound:risk}
\Big| \widehat{\gamma}(\widehat{k}_n)-\gamma  \Big| 
&\leq & \tfrac{2 r_n(\delta)}{\sqrt{k_n}} \gamma \left(1 + \alpha(\delta,n) \right) \, ,
\end{eqnarray}
where
\begin{eqnarray*}
	\alpha(\delta,n) & \leq & \frac{r_n}{2\sqrt{\ell_n}} 
	+\frac{\sqrt{ \ln(2/ \delta )}}{ r_n(\delta)}  \left(1 + \frac{3r_n(\delta)}{\sqrt{\ell_n}}\right)^2 \, .
\end{eqnarray*}
\end{thm}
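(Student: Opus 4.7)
The plan is to carry out a standard Lepski-type analysis, splitting the argument into (i) a high-probability "good event" on which the Hill process is well-concentrated near its conditional mean and the bias of the pivotal estimator is under control, and (ii) a deterministic comparison between $\widehat{\gamma}(\widehat{k}_n)$ and $\widehat{\gamma}(k_n)$ that exploits the definition of $\widehat{k}_n$. Write $T=\exp(Y_{(k_n+1)})$. The good event $\Omega_0$ is the intersection of the following two events: first, the conditional uniform concentration of the Hill process, namely $\max_{\ell_n\le i\le k_n}\sqrt{i}\,|\widehat{\gamma}(i)-\EXP[\widehat{\gamma}(i)\mid T]|\le\gamma\,\overline{z}_\delta$, obtained by applying the sub-gamma conclusion of Theorem \ref{prp:hill:concentration}(ii) (which holds conditionally on $T\ge u$ with $u$ defined so that $k_n\,\overline{\eta}(u)^2\le\gamma^2 r_n^2$); and second, the lower bound $T\ge n/k_n^\delta$, which transfers the quantile condition $\sqrt{k_n}\,\overline{\eta}(n/k_n^\delta)\le\gamma r_n$ in Definition \eqref{def:kn} into the required lower bound on $T$. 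The latter follows from R\'enyi's representation $Y_{(k_n+1)}=\sum_{j=k_n+1}^n E_j/j$ together with a Bernstein-type deviation inequality for weighted sums of exponentials, and costs probability $\delta$. A union bound yields $\probaB(\Omega_0)\ge 1-3\delta$.

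On $\Omega_0$ the first task is to establish $\widehat{k}_n\ge k_n$, i.e.\ to verify that $k_n$ itself passes the Lepski test. For any $\ell_n\le i\le k_n$, triangle the quantity $|\widehat{\gamma}(i)-\widehat{\gamma}(k_n)|$ through $\EXP[\widehat{\gamma}(i)\mid T]$ and $\EXP[\widehat{\gamma}(k_n)\mid T]$: the two fluctuation terms are bounded by $\gamma\overline{z}_\delta/\sqrt{i}+\gamma\overline{z}_\delta/\sqrt{k_n}\le 2\gamma\overline{z}_\delta/\sqrt{i}$, while the two conditional biases are each bounded by $\overline{\eta}(T)\le\gamma r_n/\sqrt{k_n}\le\gamma r_n/\sqrt{i}$. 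Summing, $|\widehat{\gamma}(i)-\widehat{\gamma}(k_n)|\le 2\gamma(r_n+\overline{z}_\delta)/\sqrt{i}=\gamma\, r_n(\delta)/(5\sqrt{i})$. On $\Omega_0$ the concentration also forces $\widehat{\gamma}(i)\ge\gamma/2$ for all relevant $i$ (since fluctuations and biases are both $o(\gamma)$), hence $\gamma/\sqrt{i}\le 2\widehat{\gamma}(i)/\sqrt{i}$, and the Lepski threshold $\widehat{\gamma}(i)r_n(\delta)/\sqrt{i}$ comfortably dominates the right-hand side. Consequently $\widehat{k}_n\ge k_n$.

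Apply now the very definition of $\widehat{k}_n$ at the admissible index $i=k_n$: we obtain $|\widehat{\gamma}(\widehat{k}_n)-\widehat{\gamma}(k_n)|\le \widehat{\gamma}(k_n)\,r_n(\delta)/\sqrt{k_n}$. The triangle inequality and $\widehat{\gamma}(k_n)\le\gamma+|\widehat{\gamma}(k_n)-\gamma|$ give
\begin{displaymath}
|\widehat{\gamma}(\widehat{k}_n)-\gamma|\le |\widehat{\gamma}(k_n)-\gamma|\Big(1+\tfrac{r_n(\delta)}{\sqrt{k_n}}\Big)+\tfrac{r_n(\delta)}{\sqrt{k_n}}\,\gamma,
\end{displaymath}
which is the oracle inequality, valid on $\Omega_0$ of probability at least $1-3\delta$.

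Finally, to upgrade this into the explicit risk bound \eqref{rem:asym:bound:risk}, bound $|\widehat{\gamma}(k_n)-\gamma|$ by the sum of the conditional fluctuation and the conditional bias: the bias part is controlled by $\overline{\eta}(T)\le\gamma r_n/\sqrt{k_n}$ on $\Omega_0$, and the fluctuation part by a fresh application of Theorem \ref{prp:hill:concentration}(ii) to the single index $k_n$, which gives $|\widehat{\gamma}(k_n)-\EXP[\widehat{\gamma}(k_n)\mid T]|\le \gamma(1+3r_n/\sqrt{k_n})\sqrt{2\ln(2/\delta)/k_n}+\gamma(1+2r_n/\sqrt{k_n})\ln(2/\delta)/k_n$ with probability $1-\delta$. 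This extra event costs one more $\delta$, bringing the total failure probability up to $4\delta$. Injecting the resulting bound on $|\widehat{\gamma}(k_n)-\gamma|$ into the oracle inequality and factoring out $2\gamma r_n(\delta)/\sqrt{k_n}$ yields exactly the claimed form, where the correction $\alpha(\delta,n)$ collects (a) the factor $r_n/(2\sqrt{\ell_n})$ coming from the $1+r_n(\delta)/\sqrt{k_n}$ amplification, and (b) the explicit Bernstein tail $\sqrt{\ln(2/\delta)}/r_n(\delta)\cdot(1+3r_n(\delta)/\sqrt{\ell_n})^2$ arising from the fresh sub-gamma deviation at $k_n$. The main technical obstacle is bookkeeping on the good event: keeping the constants transparent enough to absorb all lower order terms into the factor $10$ in $r_n(\delta)=10(r_n+\overline{z}_\delta)$, so that the Lepski test is indeed passed at $k_n$; this is where the explicit quantile form $k_n^\delta$ in Definition \eqref{def:kn} is used to sidestep integrating $\overline{\eta}$ against the random fluctuations of $T$.
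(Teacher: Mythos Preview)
Your proposal is correct and follows essentially the same architecture as the paper's proof: define the good event as the intersection of the uniform concentration bound from Theorem~\ref{prp:hill:concentration}(ii) and the lower bound $T\ge n/k_n^\delta$ on the pivotal order statistic (the paper calls these $E_1$ and $E_2$), show on this event that $\widehat{k}_n\ge k_n$ by triangling through conditional means and biases, invoke the Lepski definition at the admissible index $k_n$ to get the oracle inequality, and finally intersect with a single-index concentration event (the paper's $E_3$) to obtain the explicit bound.

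Two points of precision. First, the lower bound $\widehat{\gamma}(i)\ge\gamma/2$ is asserted too quickly; the paper instead records the exact inequality $\widehat{\gamma}(i)/\gamma\ge 1-(r_n+z_\delta)/\sqrt{i}$ and then checks directly that $2(r_n+z_\delta)\le r_n(\delta)\bigl(1-(r_n+z_\delta)/\sqrt{\ell_n}\bigr)$ using $r_n(\delta)=10(r_n+\overline{z}_\delta)$ and the calibration of $c_2$. Your asymptotic claim ``fluctuations and biases are $o(\gamma)$'' is true but does not by itself deliver the non-asymptotic statement for $n>1000$. Second, the fresh single-index bound on $\widehat{\gamma}(k_n)$ comes from Statement~(i) of Theorem~\ref{prp:hill:concentration}, not~(ii), and the sub-gamma constant there gives $\sqrt{8\ln(2/\delta)}$ rather than $\sqrt{2\ln(2/\delta)}$.
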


\begin{rem}
For $0<\delta<1/2$, 
\begin{displaymath}
\alpha(\delta,n) = o(1) \quad \text{as $n \to \infty$} \pt
\end{displaymath}
\end{rem}

\begin{rem}
If the bias $b$ is $\rho$-regularly varying  (or equivalently, if the von Mises function $\eta$ or even $\overline{\eta}$ are regularly varying), then, elaborating on Proposition 1 from \citep{MR1632189},  sequences $(k_n^*)$ and  $(k_n(1))$ are connected by
\begin{displaymath}
  \lim_n \frac{k_n(1)}{k_n^*}  = (2|\rho|)^{1/(1+2|\rho|)} 
\end{displaymath}
and their quadratic risk are related by 
\begin{displaymath}
\lim_n \frac{\EXP[(\gamma-\widehat{\gamma}(k_n(1)))^2]}{\EXP[(\gamma-\widehat{\gamma}(k_n^*))^2]}  =\frac{2}{2|\rho|+1}(2|\rho|)^{2|\rho|/(1+2|\rho|)}  \, . 
\end{displaymath}
Moreover, under the second-order assumption, the two pivotal sequences $(k_n(1))$  and $(k_n(r_n))$  are also connected. 
%% TODO connection between performance of $k_n(r_n)$ and $k_n(1)$. 

Thus, if the bias is $\rho$-regularly varying, Theorem \ref{thm:adapt-hill-estim} provides us with a connection between the performance of the simple selection rule and the performance of the (asymptotically) optimal choice.

\end{rem}

Recall that one of the main aims of this paper is to derive performance guarantees for the data-driven index selection method $\widehat{k}_n$ without resorting to second-order assumptions that is, without assuming that the von Mises function is regularly varying. The next corollary upper bounds the risk of the preliminary estimator when we just have an upper bound on the bias. 
\begin{cor} \label{cor:adapt-hill-estim-2ndrv}
Assume that, for some $C>0$  and $\rho<0$, for all $t >1$, 
\begin{displaymath}
  % \Big| \gamma - \EXP \widehat{\gamma}(k) \Big| 
  \overline{\eta}\left(t\right) \leq C  t^\rho \, .
\end{displaymath}
Then, there exists a constant $\kappa_{C,\delta, \rho}$ depending on $C, \delta$ and $\rho$ such that, with probability larger than $1-4\delta$, 
\begin{eqnarray*}
\Big| \widehat{\gamma}(\widehat{k}_n)-\gamma  \Big| 
&\leq & \kappa_{C,\delta,\rho} \left(\frac{\gamma^2\ln  \left((2/\delta) \ln n\right)}{n}\right)^{|\rho|/(1+2|\rho|)}\left(1+\alpha(\delta,n) \right) 
\end{eqnarray*}
where $\alpha(\delta,n)$  is defined in Theorem \ref{thm:adapt-hill-estim}.
\end{cor}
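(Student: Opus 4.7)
The plan is to start from the high-probability bound
\[
 \bigl| \widehat{\gamma}(\widehat{k}_n)-\gamma \bigr| \;\le\; \tfrac{2 r_n(\delta)}{\sqrt{k_n}}\,\gamma\,\bigl(1+\alpha(\delta,n)\bigr)
\]
supplied by Theorem \ref{thm:adapt-hill-estim}, and to convert it into the advertised polynomial rate by exhibiting an explicit lower bound on $\sqrt{k_n}$ using the polynomial decay $\overline{\eta}(t)\le C t^{\rho}$. Since $r_n(\delta)$ scales like $\sqrt{\ln((2/\delta)\ln n)}$ and depends only on $n$ and $\delta$, everything boils down to turning ``$k_n$ is large'' into a clean power-of-$n$ estimate.

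First, I would localise $k^\delta$ relative to $k$: for $k\ge\ell_n=\lceil c_2\ln n\rceil$ and $\delta>2/n$, one has $\sqrt{2k\ln(1/\delta)}+2\ln(1/\delta)\le k$ (for $n$ large), so $k^\delta\le 2k$, hence $n/k^\delta\ge n/(2k)$. Plugging this into the defining inequality of $k_n(r_n)$ and using $\overline{\eta}(t)\le C t^\rho$ gives, for any candidate $k$,
\[
 \sqrt{k}\,\overline{\eta}(n/k^\delta)\;\le\; C\,\sqrt{k}\,(k^\delta/n)^{|\rho|}\;\le\; C\,2^{|\rho|}\,k^{\tfrac12+|\rho|}\,n^{-|\rho|}.
\]
Requiring the right-hand side to be at most $\gamma r_n$ produces the admissibility condition
\[
 k\;\le\; \bigl(\gamma r_n/(C\,2^{|\rho|})\bigr)^{2/(1+2|\rho|)}\; n^{2|\rho|/(1+2|\rho|)}.
\]
Because $k_n$ is defined as the maximum of admissible indices, this yields, for $n$ large enough that such a $k$ lies in $\{\ell_n,\ldots,n\}$,
\[
 \sqrt{k_n}\;\ge\; c_{C,\rho}\,(\gamma r_n)^{1/(1+2|\rho|)}\,n^{|\rho|/(1+2|\rho|)},
\]
for some constant $c_{C,\rho}$ depending only on $C$ and $\rho$.

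Substituting this lower bound into Theorem \ref{thm:adapt-hill-estim} gives
\[
 \tfrac{r_n(\delta)\gamma}{\sqrt{k_n}}\;\le\; \tfrac{1}{c_{C,\rho}}\,\gamma^{2|\rho|/(1+2|\rho|)}\,\tfrac{r_n(\delta)}{r_n^{1/(1+2|\rho|)}}\,n^{-|\rho|/(1+2|\rho|)}.
\]
Using $|\rho|/(1+2|\rho|)=\tfrac12-\tfrac{1}{2(1+2|\rho|)}$, the remaining logarithmic factor rearranges as
\[
 \tfrac{r_n(\delta)}{r_n^{1/(1+2|\rho|)}} \;=\; \bigl(\ln((2/\delta)\ln n)\bigr)^{|\rho|/(1+2|\rho|)}\,\Bigl(\tfrac{\ln((2/\delta)\ln n)}{c_3\ln\ln n}\Bigr)^{1/(2(1+2|\rho|))}\,\cdot\mathrm{const},
\]
and the second factor is bounded by a constant depending only on $\delta$, since $\ln((2/\delta)\ln n)/\ln\ln n \to 1$ as $n\to\infty$. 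Absorbing all $C,\delta,\rho$-dependent constants into a single $\kappa_{C,\delta,\rho}$ produces exactly
\[
 \bigl|\widehat\gamma(\widehat k_n)-\gamma\bigr|\;\le\;\kappa_{C,\delta,\rho}\Bigl(\tfrac{\gamma^2\ln((2/\delta)\ln n)}{n}\Bigr)^{|\rho|/(1+2|\rho|)}\bigl(1+\alpha(\delta,n)\bigr),
\]
with failure probability at most $4\delta$, as claimed.

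The step I expect to be mildly delicate is controlling $k^\delta$ in terms of $k$ uniformly over admissible $k$: one must ensure that the admissibility set is nonempty (so the candidate $k$ derived from the polynomial bound does lie in $\{\ell_n,\ldots,n\}$) and that the slack introduced by $k^\delta\le 2k$ does not inflate the constant. Everything else is algebra on exponents.
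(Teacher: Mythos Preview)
Your proof is correct and follows essentially the same route as the paper's: lower bound $\sqrt{k_n}$ from the polynomial decay assumption, then substitute into the bound \eqref{rem:asym:bound:risk} from Theorem~\ref{thm:adapt-hill-estim} and rearrange exponents. The only minor difference is that the paper obtains the lower bound on $k_n$ by using that $k_n+1$ \emph{violates} the defining inequality (so $\gamma r_n/\sqrt{k_n+1} \le C(n/(k_n+1)^\delta)^\rho$, then solves for $\sqrt{k_n+1}$ after bounding $(k_n+1)^\delta\le(\sqrt{k_n+1}+\sqrt{2\ln(1/\delta)})^2$), whereas you exhibit an explicit admissible candidate using $k^\delta\le 2k$ and invoke the maximality of $k_n$; these are dual arguments with the same output up to constants.
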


% Under the assumption that the bias of the Hill estimators is upper bounded by a power function, 
% the performance of the data-driven estimator $\widehat{\gamma}(\widehat{k}_n)$ 
This meets the information-theoretic lower bound of Theorem \ref{thm-kc-lower-bound}.

%%%%%%%

\section{Proofs} \label{sec:proofs}
\subsection{Proof of Proposition  \ref{hill:rep}}

This proposition is a straightforward consequence of  R\'enyi's
representation
of order statistics of standard exponential samples. 

%%% RF APPENDIX
As $F$ belongs to $\textsf{MDA}(\gamma)$ and meets the von Mises condition, there exists a function $\eta$ on $(1,\infty)$ with $\lim_{x\to \infty} \eta(x)=0$ such that 
\begin{displaymath}
  U(x) = c x^\gamma \exp \left( \int_1^x \frac{\eta(s)}{s} \mathrm{d}s\right)  \, , 
\end{displaymath}
and
\begin{displaymath}
  U(\mathe^y) =c \exp \left( \int_{0}^y (\gamma +\eta(\mathe^u)) \mathrm{d}u\right) \, . 
\end{displaymath}
Then, 
\begin{eqnarray*}
  \widehat{\gamma}(k) &\dis & \frac{1}{k} \sum_{i=1}^k i\frac{\ln U \left( \mathe^{Y_{(i)}}\right)}{\ln U \left( \mathe^{Y_{(i+1)}}\right)}\\
  &\dis&  \frac{1}{k} \sum_{i=1}^k i\int_{Y_{(i+1)}}^{Y_{(i)}} (\gamma +\eta(\mathe^u)) \mathrm{d}u \\
&\dis & \frac{1}{k} \sum_{i=1}^k i\int_{0}^{E_i/i} (\gamma +\eta(\mathe^{u+Y_{(i+1)}})) \mathrm{d}u \\  
&\dis & \frac{1}{k} \sum_{i=1}^k \int_{0}^{E_i} (\gamma +\eta(\mathe^{\tfrac{u}{i}+Y_{(i+1)}})) \mathrm{d}u  \, . 
\end{eqnarray*}

\subsection{Proof of Proposition \ref{prop:bound-vari-hill}}

Let $Z= k\widehat{\gamma}(k)$.
By the Pythagorean relation, 
\begin{displaymath}
  \var(Z)  =  \EXP \left[ \var\left( Z \mid Y_{(k+1)}\right)\right] + \var\left( \EXP[Z \mid Y_{(k+1)}]\right) \, . 
\end{displaymath}
Representation \eqref{rep:hill:var} asserts that, conditionally on $Y_{(k+1)}$, $Z$ is distributed as a sum of independent, exponentially distributed random variables. Let $E$ be an exponentially distributed random variable.
\begin{eqnarray*}
\lefteqn{  \var\left( Z \mid Y_{(k+1)}= y\right) }\\
&= & k\var\biggl( \gamma E + \int_0^{E} \eta(\mathe^{u +y} )\mathrm{d}u\biggr) \\
& = & k  \gamma^2 + 2 k \gamma\cov\biggl( E, \int_0^{E} \eta(\mathe^{u+y} )\mathrm{d}u\biggr) + \var\biggl(  \int_0^{E}\eta(\mathe^{u+y} )\mathrm{d}u\biggl) \\
& \leq & k  \gamma^2 + 2k\gamma \overline{\eta}(\mathe^y)  + k  \left(\overline{\eta}\left(\mathe^y\right)\right)^2 \, ,
\end{eqnarray*}
where we have used the Cauchy-Schwarz inequality and $ \var\bigl(  \int_0^E \eta(\mathe^{y+u})\mathrm{d}u\bigl) \leq \overline{\eta}(\mathe^{y})^2 .$
Taking expectation with respect to $Y_{(k+1)}$  leads to
\begin{displaymath}
   \EXP \left[ \var\left( Z \mid Y_{(k+1)}\right)\right]  \leq k{\gamma^2}  + 2k\gamma\EXP \left[ \overline{\eta}\left(\mathe^{Y_{(k+1)}} \right)\right] 
+ k{\EXP\left[ \overline{\eta}\left(\mathe^{Y_{(k+1)}} \right)^2\right]} \, .
\end{displaymath}
The last term in the Pythagorean decomposition is also handled using elementary arguments. %% the  Poincar\'e inequality. 
\begin{eqnarray*}
%\EXP[Z \mid Y_{(k+1)}]-\EXP Z =k\int_0^\infty\mathe^{-u} \left( \eta\left(\mathe^{u +Y_{(k+1)}}\right) - \EXP\left[ \eta\left(\mathe^{u+Y_{(k+1)}}\right)\right]\right)  \mathrm{d}u  \, .
\EXP[Z \mid Y_{(k+1)}] =k \gamma + 
k\int_0^\infty\mathe^{-u}  \eta\left(\mathe^{u +Y_{(k+1)}}\right) \mathrm{d}u  \, .
\end{eqnarray*}
As $Y_{(k+1)}$  is a function of independent exponential random
variables ($Y_{(k+1)}= \sum_{i=k+1}^n  E_i/i$), the variance of $\EXP[Z \mid Y_{(k+1)}]$ may  
be upper bounded using Poincar\'e inequality (Proposition \ref{poincare:expo})
\begin{displaymath}
  \var\left( \EXP[Z \mid Y_{(k+1)}]\right)   
 \leq  4k \EXP \left[ \overline{\eta}\left(\mathe^{Y_{(k+1)}} \right)^2\right]\, . 
\end{displaymath}

In order to derive the lower bound, we first observe that
\begin{displaymath}
   \var(Z)  \geq  \EXP \left[ \var\left( Z \mid
       Y_{(k+1)}\right)\right]  \, .
\end{displaymath}
Now,  using Cauchy-Schwarz inequality again,
\begin{eqnarray*}
 \var\left( Z \mid Y_{(k+1)}= y\right) 
& \geq  & k  \gamma^2 - 2 k \gamma \bigg \lvert \cov\bigl( E, \int_0^E
  \eta(\mathe^{u+y} )\mathrm{d}u\bigr) \bigg \rvert \\
& \geq & k  \gamma^2 -  2 k \gamma  \bigg(\var \bigg(\int_0^E
  \eta(\mathe^{u+y} )\mathrm{d}u\bigg) \bigg)^{1/2} \\
& \geq & k  \gamma^2 - 2k \gamma \overline{\eta}(\mathe^y)  \, . 
\end{eqnarray*}

\subsection{Proof of Theorem  \ref{prp:hill:concentration}}

In the proof of Theorem \ref{prp:hill:concentration}, we will use the next maximal inequality \citep[see][Corollary 2.6]{BoLuMa13}. Recall the definition of $\Gamma_+(v,c)$ (Definition \ref{dfn:sub-gamma}).
\begin{prp}
\label{zlapcor}
Let $Z_1,\ldots,Z_N$ be real-valued random variables belonging
to $\Gamma_+(v,c)$.  Then
\[
\EXP \left[\max_{i=1,\ldots,N} Z_i \right] \leq
\sqrt{2v\ln N}+ c\ln N~.
\]
\end{prp}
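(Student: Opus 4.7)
The plan is to run the classical Chernoff-Cram\'er argument for the maximum of sub-gamma variables: first pass from the expected maximum to a moment generating function (mgf) bound via Jensen, then union-bound over the $N$ mgfs using the sub-gamma hypothesis, and finally optimise the resulting linear-in-$\lambda$ bound over the admissible range $\lambda\in(0,1/c)$.

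In detail, I would start by writing, for any $\lambda\in(0,1/c)$,
\begin{displaymath}
\exp\bigl(\lambda\,\EXP[\max_{i\le N} Z_i]\bigr)
\;\le\; \EXP\bigl[\exp(\lambda \max_{i\le N} Z_i)\bigr]
\;=\; \EXP\bigl[\max_{i\le N}\exp(\lambda Z_i)\bigr]
\;\le\; \sum_{i=1}^N \EXP\bigl[\exp(\lambda Z_i)\bigr],
\end{displaymath}
where the first inequality is Jensen's applied to the convex function $x\mapsto\mathe^{\lambda x}$, the equality uses monotonicity of the exponential, and the last step is the trivial ``max $\le$ sum'' bound on nonnegative terms. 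Since each $Z_i\in\Gamma_+(v,c)$, each summand is at most $\exp\bigl(\lambda^2 v/(2(1-c\lambda))\bigr)$, so taking logarithms and dividing by $\lambda$ yields
\begin{displaymath}
\EXP[\max_{i\le N} Z_i] \;\le\; \frac{\ln N}{\lambda} \;+\; \frac{\lambda v}{2(1-c\lambda)}.
\end{displaymath}

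The last step is to pick $\lambda$ so that the two terms on the right balance and recombine into $\sqrt{2v\ln N}+c\ln N$. The choice I would make is
\begin{displaymath}
\lambda \;=\; \frac{1}{c+\sqrt{v/(2\ln N)}} \;\in\;(0,1/c),
\end{displaymath}
which gives $1-c\lambda=\sqrt{v/(2\ln N)}/(c+\sqrt{v/(2\ln N)})$, hence $\lambda/(1-c\lambda)=\sqrt{2\ln N/v}$, so that $\lambda v/(2(1-c\lambda))=\sqrt{v\ln N/2}$ and $\ln N/\lambda=c\ln N+\sqrt{v\ln N/2}$. Summing these two contributions yields exactly $c\ln N+\sqrt{2v\ln N}$, as announced.

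No step looks delicate; the only thing to watch is that the prescribed $\lambda$ lies in the admissible range $(0,1/c)$ (it does, since $c+\sqrt{v/(2\ln N)}>c>0$) and that the bound holds trivially when $c=0$ by letting $\lambda=\sqrt{2\ln N/v}$ and taking a limit in the above formula, recovering the familiar sub-Gaussian maximal inequality $\EXP[\max_i Z_i]\le\sqrt{2v\ln N}$ as a special case.
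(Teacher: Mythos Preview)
Your argument is correct and is precisely the standard Chernoff--Cram\'er maximal inequality for sub-gamma variables; the paper does not spell out a proof but simply cites \citep[Corollary~2.6]{BoLuMa13}, whose derivation is exactly the one you give. There is nothing to add.
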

Proofs follow a common pattern. In order to check that some random variable is sub-gamma, we rely on its representation as a function of independent
exponential variables and compute partial derivatives, derive convenient upper bounds on the squared Euclidean norm and the supremum norm of the gradient and then invoke Theorem \ref{bernstein:expo}. 

At some point, we will use the next corollary of Theorem \ref{bernstein:expo}. 
\begin{cor}\label{sec:proof-prop-refprp:technique} 
  If $g$ is an almost everywhere differentiable function on $\mathbb{R}$ with uniformly bounded derivative $g'$, then $g(Y_{(k+1)})$
is sub-gamma with variance factor $4\| g' \|^2_\infty/k$ and scale factor $\| g' \|_\infty/k.$
\end{cor}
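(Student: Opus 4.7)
The plan is to reduce this to a direct application of Theorem \ref{bernstein:expo} using R\'enyi's representation.

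First, I would write $Y_{(k+1)}$ as a function of independent standard exponentials. By R\'enyi's representation,
\begin{displaymath}
 Y_{(k+1)} \stackrel{d}{=} \sum_{j=k+1}^{n} \frac{E_j}{j},
\end{displaymath}
where $E_{k+1},\ldots,E_n$ are i.i.d.\ standard exponentials. Define $f : \mathbb{R}^n \to \mathbb{R}$ by
\begin{displaymath}
f(e_1,\ldots,e_n) = g\Bigl(\sum_{j=k+1}^{n} e_j/j\Bigr),
\end{displaymath}
so that $g(Y_{(k+1)}) \stackrel{d}{=} f(E_1,\ldots,E_n)$.

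Next, I would compute the partial derivatives. For $j \leq k$, $\partial_j f = 0$; for $j \geq k+1$,
\begin{displaymath}
\partial_j f(e_1,\ldots,e_n) = \frac{1}{j}\, g'\Bigl(\sum_{i=k+1}^n e_i/i\Bigr),
\end{displaymath}
wherever $g'$ exists. Since $g$ is almost everywhere differentiable with $\|g'\|_\infty < \infty$, the gradient is defined almost everywhere and satisfies, uniformly,
\begin{displaymath}
\|\nabla f\|^2 \;\leq\; \|g'\|_\infty^2 \sum_{j=k+1}^{n}\frac{1}{j^2} \;\leq\; \|g'\|_\infty^2 \int_{k}^{\infty} \frac{\mathrm{d}x}{x^2} \;=\; \frac{\|g'\|_\infty^2}{k},
\end{displaymath}
and $\max_j |\partial_j f| \leq \|g'\|_\infty/(k+1) \leq \|g'\|_\infty/k$.

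Finally, I would apply Theorem \ref{bernstein:expo}: with the essential supremum $v = \|g'\|_\infty^2/k$ of $\|\nabla f\|^2$ and scale $\max_j |\partial_j f| \leq \|g'\|_\infty/k$, the theorem yields that $f(E_1,\ldots,E_n) - \EXP f(E_1,\ldots,E_n)$ is sub-gamma on both tails with variance factor $4v = 4\|g'\|_\infty^2/k$ and scale factor $\|g'\|_\infty/k$, which is exactly the claim. The only mild obstacle is that Theorem \ref{bernstein:expo} is stated for differentiable $g$ while we allow $g$ only to be almost everywhere differentiable; this is circumvented by a standard mollification argument (convolve $g$ with a smooth approximate identity $\varphi_\varepsilon$, obtain the same bounds with $\|g'\|_\infty$ since $\|(g \ast \varphi_\varepsilon)'\|_\infty \leq \|g'\|_\infty$, and pass to the limit $\varepsilon \to 0$ in the moment generating function using dominated convergence, since sub-gamma control is preserved under pointwise limits).
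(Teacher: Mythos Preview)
Your proof is correct and is precisely the argument the paper has in mind: the corollary is stated without proof as an immediate consequence of Theorem~\ref{bernstein:expo}, obtained exactly by writing $Y_{(k+1)}=\sum_{j=k+1}^n E_j/j$ via R\'enyi's representation, bounding $\|\nabla f\|^2\le \|g'\|_\infty^2\sum_{j\ge k+1}1/j^2\le \|g'\|_\infty^2/k$ and $\max_j|\partial_j f|\le \|g'\|_\infty/(k+1)$, and invoking the theorem. Your mollification remark is a sensible way to bridge the ``almost everywhere differentiable'' phrasing to the smoothness hypothesis of Theorem~\ref{bernstein:expo}; the paper does not spell this out.
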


\begin{proof}[Proof of Theorem  \ref{prp:hill:concentration}]

We start from the exponential representation of Hill estimators (Proposition \ref{hill:rep}) and represent all $\widehat{\gamma}(i)$ as  functions of independent random variables 
$E_1,\ldots,E_{{k}},\ldots, E_J,Y_{({J+1})}$ where the $E_j, 1\le j \le J$, are standard exponentially distributed and $Y_{({J+1})}$ 
is distributed like the $(J+1)$th largest order statistic of an $n$-sample of the standard exponential distribution. We consistently use the notation $Y_{(k)} =  \sum_{j=k}^J \frac{E_j}{j}  +Y_{(J+1)}$, for $1\leq k \leq J$.

\begin{eqnarray*}
i\widehat{\gamma}(i) & = &  \sum_{j=1}^{i} \int_{0}^{E_j} \left(\gamma +\eta(\mathe^{\frac{u}{j}+Y_{({j+1})}})\right) \mathrm{d}u\\
&= &  \sum_{j=1}^{i} \left( \gamma E_j + j \int_{Y_{(j+1)}}^{Y_{(j)}} \eta(\mathe^{v}) \, \mathrm{d}v \right)\, . 
%&=& f(E_1,\ldots,E_{{J}},Y_{({J+1})}) \pt
\end{eqnarray*}
Let $i'$ be such that  $0\leq i'< i$,  let us agree on
$\widehat{\gamma}(0)=0.$ Let
\begin{eqnarray*}
 g(E_{i'+1}, \dots, E_{J}) &= & i \widehat{\gamma}(i) - i' \widehat{\gamma}(i') \\
 & = & \sum_{j=i'+1}^i \gamma E_j + \sum_{j=i'+1}^i j 
\int_{Y_{(j+1)}}^{Y_{(j)}} \eta(\mathe^{v})\,  \mathrm{d}v 
  \, .
\end{eqnarray*}
For $i' <p \leq i$, as $\frac{\partial Y_{(j)}}{\partial x_p} = \frac{1}{p}$ for $j\leq p$  and $0$ otherwise, 
\begin{eqnarray*}
 \frac{\partial g}{\partial x_p} 
 & = &  \gamma + \sum_{j=i'+1}^p j \frac{\partial\int_{Y_{(j+1)}}^{Y_{(j)}} \eta(\mathe^{v})\mathd v}{\partial x_p}\\
& = & \gamma + \eta(\mathe^{Y_{(p)}})+ \sum_{j=i'+1}^{p-1} \frac{j}{p} \left( \eta(\mathe^{Y_{(j)}}) - \eta(\mathe^{Y_{(j+1)}}) \right) \\
& = & \gamma + \sum_{j=i'+2}^p \frac{\eta(\mathe^{Y_{(j)}})}{p} + \frac{(i'+1)\eta(\mathe^{Y_{(i'+1)}})}{p} \, . 
\end{eqnarray*}
This entails that, for $i'<p\leq i$, 
\begin{equation}\label{eq:part:deriv:1}
  \left|  {\frac{\partial g}{\partial x_p}} \right| \leq \gamma + 
  \sum_{j = 1}^p \frac{\overline{\eta}(\mathe^{Y_{(p\vee i'+1)}})}{p} 
  \leq  \gamma + \overline{\eta}(\mathe^{Y_{(p)}}) \, . 
\end{equation}
For $i <p \leq  J$, 
\begin{eqnarray*}
  \frac{\partial g}{\partial x_p} 
& = & \sum_{j=i'+1}^{i}  j \frac{\partial \int_{Y_{(j+1)}}^{Y_{(j)} } \eta(\mathe^{v})\, \mathrm{d}v }{\partial x_p}  \\
& = & \sum_{j=i'+1}^{i} \frac{j}{p} \left( \eta(\mathe^{Y_{(j)}}) - \eta(\mathe^{Y_{(j+1)}}) \right) \\
& = & \frac{1}{p} \left(\sum_{j=i'+2}^i \eta(\mathe^{Y_{(j)}}) + (i'+1)\eta(\mathe^{Y_{(i'+1)}}) - i\eta(\mathe^{Y_{(i+1)}})\right)\\
& = & \frac{1}{p}  \left( \sum_{j=i'+1}^i \left( \eta(\mathe^{Y_{(j)}})- \eta(\mathe^{Y_{(i+1)}})\right)  
+ i'\left( \eta(\mathe^{Y_{(i'+1)}})- \eta(\mathe^{Y_{(i+1)}})\right)   \right) \, .
\end{eqnarray*}
This is enough to entail that, for $i<p\leq k$,
\begin{equation}\label{eq:part:deriv:2}
  \left|  {\frac{\partial g}{\partial x_p}} \right|  \leq 
     \frac{2i}{p} \overline{\eta}(\mathe^{Y_{(p)}}) \, . 
  \end{equation}
All in all, for $1 \leq p \leq k$,
\begin{displaymath}
   \left|  {\frac{\partial g}{\partial x_p}} \right| \leq  (\gamma + \overline{\eta}(T)) \vee 2 \overline{\eta}(T) \leq \gamma + 2\overline{\eta}(T)\, .  
\end{displaymath}

\paragraph*{Proof of i)}
An upper bound on the variance factor for $i\widehat{\gamma}(i)$, conditionally on $T$,   is obtained by specialising to the case $i'=0$ and using \eqref{eq:part:deriv:1} and \eqref{eq:part:deriv:2} as well as the monotonicity of $\overline{\eta}$, 
\begin{eqnarray*}
   \sum_{p=1}^{J}  \left|  \frac{\partial g}{\partial x_p} \right|^2
  & \leq  &  \sum_{p=1}^i \left(\gamma + \overline{\eta}(\mathe^{Y_{(p)}}) \right)^2 + \sum_{p=i+1}^J \frac{4i^2}{p^2} 
\overline{\eta}(\mathe^{Y_{(p)}})^2 \\
& \leq & 
  {i} \left( \left( \gamma + \overline{\eta}(T)\right)^2 + 4  
  \left(\overline{\eta}(T)\right)^2 \right) \leq i \left( \gamma + 3\overline{\eta}(T)\right)^2\, . 
\end{eqnarray*}

  Using Theorem \ref{bernstein:expo} conditionally on $T= \exp(Y_{(J+1)})$, we realise  that $\sqrt{i}(\widehat{\gamma}(i)-\EXP\left[ \widehat{\gamma}(i)\mid T \right])$
  is sub-gamma on both sides with variance factor not larger than $4 \left( \gamma + 3\overline{\eta}(T)\right)^2$ and scale factor not larger than 
  $\gamma + 2\overline{\eta}(T)$. This yields
\begin{flalign*}
\mathbb{P}&\left\{ \lvert \widehat{\gamma}(i)-\EXP\left[ \widehat{\gamma}(i)\mid T \right] \rvert \geq  \frac{\gamma+3\overline{\eta}\left(T\right)}{\sqrt{i}} \left( \sqrt{8s} +\frac{s}{\sqrt{i}}\right) \mid T\right\} \leq 2 \mathe^{-s}  \, . 
\intertext{Taking expectation on both sides, this implies that }
\mathbb{P}&\left\{ \lvert \widehat{\gamma}(i)-\EXP\left[ \widehat{\gamma}(i)\mid T\right] \rvert \geq  \frac{\gamma+3\overline{\eta}\left(T\right)}{\sqrt{i}} \left( \sqrt{8s} +\frac{s}{\sqrt{i}}\right) \right\} \leq 2 \mathe^{-s}  \, . 
\end{flalign*}

\paragraph*{Proof of ii)} % (fold)
\label{par:proof_of_ii_}
The proof of the upper bound on $\mathbb{E}[Z^a \mid T]$ in  Statement
ii) from Theorem \ref{prp:hill:concentration} relies on standard
chaining techniques from the theory of empirical processes and uses repeatedly the concentration Theorem \ref{bernstein:expo} for smooth functions of independent exponential random variables and the maximal inequality for sub-gamma random variables (Proposition \ref{zlapcor}).

For general $i'$, the variance factor for $i\widehat{\gamma}(i)- i'\widehat{\gamma}(i')$ is upper bounded by 
\begin{displaymath}
   (i-i')  \left(\gamma +  \overline{\eta}(T) \right)^2 + 
    \sum_{p=i+1}^J \frac{i^2}{p^2}
   \left(2 \overline{\eta}(T) \right)^2 \leq 
  (i-i') \left(\gamma +  \overline{\eta}(T) \right)^2 + i (2\overline{\eta}(T))^2 \, . 
\end{displaymath} 
Let $u$ be such that $J\overline{\eta}(u)^2 \leq \gamma^2 r_n^2$ where $r_n=\sqrt{c_3\ln \ln n}$ with $c_3=2$. Now, as we assume, in the sequel, that $T>u$, we may use the next upper bound for the variance 
factor of $i \widehat{\gamma}(i)-i' \widehat{\gamma}(i')$ (conditionally on $Y_{(J+1)}$),\
\begin{equation*}
4 \gamma^2 \left( (i-i') \Big(1+ \frac{ r_n}{\sqrt{J}} \Big)^2 + 4 r_n^2 \right) \, .    
\end{equation*} 
Recall that 
\begin{displaymath}
  Z^a=  \max_{  \ell \leq i\leq k} \sqrt{i} \left| \widehat{\gamma}(i) -\EXP \left[\widehat{\gamma}(i) \mid Y_{(k+1)}\right] \right|\, .
\end{displaymath}
As it is commonplace in the analysis of normalised empirical processes
\citep[see ][and references therein]{vandegeer:2000,gine:koltchinskii:2006,massart:2003}, we peel the index set over 
which the maximum is computed. 

Let $\mathcal{L}_n = \{ \lfloor \log_2 (\ell) \rfloor, \ldots, 
\lfloor \log_2({k})\rfloor  \}$ and, for all $j \in \mathcal{L}_n$, $\mathcal{S}_j = \{ \ell \vee 2^j , \ldots ,  k\wedge 2^{j+1} -1\} $. Define $Z^a_j$ as 
\begin{displaymath}
  Z^a_j  = \max_{i \in \mathcal{S}_j} \sqrt{i} \left|\widehat{\gamma}(i) -\EXP \left[\widehat{\gamma}(i)\mid Y_{(k+1)}\right] \right|\, .
\end{displaymath}
Then,  
\begin{eqnarray*}
\EXP [Z^a\mid Y_{(k+1)}] & =  & \EXP [\max_{j\in \mathcal{L}_n} Z^a_j \mid Y_{(k+1)}] \\
& \leq & \EXP [\max_{j\in \mathcal{L}_n} (Z^a_j-\EXP [Z^a_j\mid  Y_{(k+1)}] )\mid Y_{(k+1)}] ] + \max_{j\in \mathcal{L}_n} \EXP[ Z^a_j\mid  Y_{(k+1)}] ] \, . 
\end{eqnarray*}
We now derive  upper bounds  on both summands by resorting to the maximum inequality for sub-gamma random variables (Proposition \ref{zlapcor}). 
We first bound $\EXP[ Z^a_j\mid  Y_{(k+1)}]$, for $j\in \mathcal{L}_n$.  

Note that direct invocation of Lemma \ref{zlapcor} and Statement i) shows that 
\begin{equation}\label{eq:poor:bound}
	\mathbb{E} [Z^a_j \mid Y_{(k+1)}] \leq 2 \gamma(1+3r/\sqrt{J})\left( \sqrt{8  j \ln(2) } + j \ln(2)\right) \, . 
\end{equation}
This bound will be useful for handling small values of $j$. For $j\leq 11$,  $\sqrt{8  j \ln(2) } + j \ln(2)\leq 16$.

We now handle generic $j$ using chaining.  Fix $j\in \mathcal{L}_n$,
  \begin{displaymath}
  \max_{i\in \mathcal{S}_j} \sqrt{i} \left| \widehat{\gamma}(i) - \EXP [\widehat{\gamma}(i) \mid Y_{(k+1)}] \right| 
  \leq \frac{1}{2^{j/2}} \max_{i \in \mathcal{S}_j} \,  i\left| \widehat{\gamma}(i) - \EXP [\widehat{\gamma}(i) \mid Y_{(k+1)}] \right|  \, . 
\end{displaymath}
In order to alleviate notation, let $W(i) = i \left( \widehat{\gamma}({i}) -\EXP [ \widehat{\gamma}({i})  \mid Y_{(k+1)}] \right)$,  for $i\in \mathcal{S}_j$. For $i\in \mathcal{S}_j$, let
\begin{displaymath}
i = 2^{j} +\sum_{m=1}^j b_m 2^{j-m} \ \text{ where $b_m \in \{0,1\}$}
\end{displaymath} 
be the binary expansion of $i$. Then, for $h\in \{0, \ldots, j\}$, let $\pi_h(i)$ be defined by 
\begin{displaymath}
\pi_{h}(i)=2^j + \sum_{m=1}^h b_m 2^{j-m} \,
\end{displaymath}
so that $\pi_j(i)=i$, $\pi_0(i)=2^j$ and $0\leq \pi_{h+1}(i)-\pi_h(i)\le 2^{j-h-1}$. 

Using the fact that $W(\pi_0(i))$ does not depend on $i$ and that 
$$
\esp \left[W(\pi_0(i)) \mid Y_{(k+1)} \right]=0\, ,
$$ 
we obtain
\begin{eqnarray*}
  \lefteqn{\EXP \left[ \max_{i \in \mathcal{S}_j}  i\left(\widehat{\gamma}(i) -\EXP \left[\widehat{\gamma}(i)\mid Y_{(k+1)}\right] \right)\mid Y_{(k+1)} \right]}\\
& = & \EXP  \left[ \max_{i \in \mathcal{S}_j} W(i) \mid Y_{(k+1)} \right] \\
& = & \EXP  \left[ \max_{i \in \mathcal{S}_j} W(\pi_j(i)) -W(\pi_0(i))\mid Y_{(k+1)} \right]  \\
& = & \EXP  \left[ \max_{i \in \mathcal{S}_j} \sum_{h=0}^{j-1} (W(\pi_{h+1}(i)) -W(\pi_h(i)))\mid Y_{(k+1)}  \right] \\
& \leq & \sum_{h=0}^{j-1} \EXP \left[ \max_{i \in \mathcal{S}_j}  (W(\pi_{h+1}(i)) -W(\pi_h(i))) \mid Y_{(k+1)} \right] \, .
\end{eqnarray*}
Now, for each $h\in \{0,\ldots,j-1\}$, the maximum  is taken over $2^h$ random variables which are sub-gamma with variance factor 
\[
4 \gamma^2 \times \left(  2^{j-h-1}\Big(1+  \frac{r_n}{\sqrt{J}}\Big)^2 +4 r_n^2\right)
\] 
and scale factor $(\gamma+ 2\overline{\eta}(T))\leq \gamma (1+ 2r_n/\sqrt{J})$. By Proposition~\ref{zlapcor}, since $i \in \mathcal S_j$, 
\begin{eqnarray*}
  \lefteqn{\EXP \left[ \max_{i \in \mathcal{S}_j}  i\left(\widehat{\gamma}(i) -\EXP \left[\widehat{\gamma}(i)\mid Y_{(k+1)}\right] \right)\mid Y_{(k+1)} \right]}\\
&\leq &   \gamma \sum_{h=0}^{j-1} \left(\left(1+ \frac{r_n}{\sqrt{J}}\right) \sqrt{8 h  2^{(j-h-1)}\ln 2 } + \sqrt{32 h \ln 2} r_n+ \left(1+ \frac{2r_n}{\sqrt{J}}\right) h \ln 2\right) \\
& \leq &  \gamma \left(1+ \frac{2r_n}{\sqrt{J}}\right)  \left( 2^{(j-1)/2}  4.15 \sqrt{8 \ln 2 }+ \frac{2}{3 } \sqrt{32 c_2 } \ln(2) j^{2}+  \frac{j(j-1)}{2} \ln 2\right) 
\end{eqnarray*}
where we have used 
\begin{eqnarray*}
 4.15 	& \geq & \sum_{h=0}^\infty \sqrt{h 2^{-h}} \\
r_n &\leq & \sqrt{c_2 \ln(2)}j^{1/2} \text{ as } r_n=\sqrt{c_3\ln\ln(n)}, j+1\geq \log_2(c_2 \ln (n)) \, .
\end{eqnarray*}
For $j\geq 12$, 
\begin{eqnarray*}
{\EXP \left[ \max_{i \in \mathcal{S}_j}  i\left(\widehat{\gamma}(i) -\EXP \left[\widehat{\gamma}(i)\mid Y_{(k+1)}\right] \right)\mid Y_{(k+1)} \right]}
& \leq &   17 \gamma \, 2^{j/2}  \left(1+ \frac{2r_n}{\sqrt{J}}\right)\, . 
\end{eqnarray*}

% $\sum_{h=0}^\infty \sqrt{h 2^{-h}} \leq 4.15$, $r_n \leq \sqrt{c_2 \ln(2)}j^{1/2}$ (as $r_n=c_2\sqrt{\ln\ln(n)}$, $j+1\geq \log_2(c_3 \ln (n))$)
% $\max_{j\geq 5} j^{2}2^{-j/2}\leq 9/2$, $\max_{j\geq 5} j(j-1)2^{-j/2}= 30/8$. 

% For all relevant $j$,  by clause i) of the Theorem, conditionally on $Y_{(J+1)}$, $\max_{i\in \mathcal{S}_j}  \sqrt{i} (\widehat{\gamma}(i) - \EXP \left[ \widehat{\gamma}(i) \mid Y_{(k+1)}\right])$, is the maximum of 
% at most $2^j$  sub-gamma random variables with variance factor $4(\gamma+3 \overline{\eta}(T))^2$ and scale factor $(\gamma+3 \overline{\eta}(T))/\sqrt{\ell}$.
% A direct application of Proposition \ref{zlapcor} leads to 
% \begin{eqnarray*}
%   \lefteqn{\EXP \left[ \max_{i \in \mathcal{S}_j}  i\left(\widehat{\gamma}(i) -\EXP \left[\widehat{\gamma}(i)\mid Y_{(k+1)}\right] \right)\mid Y_{(k+1)} \right]} \\
% &  \leq & (\gamma+3 \overline{\eta}(T)) \left( \sqrt{8\ln(2) j }+ \frac{\ln(2) j}{\sqrt{\ell}}\right) \\
% &  \leq &  \gamma \left(1+\frac{3r}{\sqrt{J}}\right) \left( \sqrt{8 \ln (2) j} + \frac{j\ln(2)}{\sqrt{\ell}}\right) \, . 
% \end{eqnarray*}
% So that for all  $j \in \mathcal{L}_n$, 
% \begin{eqnarray*}
%  \lefteqn{ \EXP \left[ \max_{i\in \mathcal{S}_j}  \sqrt{i} (\widehat{\gamma}(i) - \EXP \left[ \widehat{\gamma}(i) \mid Y_{(k+1)}\right]) \mid Y_{(k+1)}\right] }\\
%  %&\leq &  \gamma \,  \left(1+ \frac{3r}{\sqrt{J}}\right)\, \left( 7  + 2.6 j^22^{-j/2} \right) \\
%  & \leq &    \gamma \,  \left(1+ \frac{3r}{\sqrt{J}}\right) \left( 8.4 + 2.3 r\right)  \, . 
% \end{eqnarray*}
Finally, for all $j \in \mathcal{L}_n$, 
\begin{displaymath}
  \EXP [Z^a_j\mid Y_{(k+1)}] \leq 34 \,  \gamma \,  \left(1+ \frac{3r_n}{\sqrt{J}}\right)   \,. 
\end{displaymath}

In order to prove Statement ii), we check that, for each $j \in \mathcal{L}_n$,  $Z^a_j$ is sub-gamma on the right-tail  with variance factor at most $4\left(\gamma+ 3 \overline{\eta}(T) \right)^2$ and scale factor not larger than $\left(\gamma+ 3 \overline{\eta}(T)\right)/\sqrt{\ell}$. Under the von Mises condition (Definition \ref{dfn:vmises:cond}), the sampling
distribution is absolutely continuous with respect to Lebesgue measure. For almost every sample, the maximum defining $Z^a_j$ is attained at a single index $i\in \mathcal{S}_j$. Starting again from the exponential representation
and repeating the computation of partial derivatives, we obtain the desired bounds. 

By Proposition \ref{zlapcor}, 
\begin{eqnarray*}
\lefteqn{\EXP \left[ \max_{j \in \mathcal{L}_n} (Z^a_j -\EXP [Z^a_j \mid Y_{(k+1)}]) \mid Y_{(k+1)}\right]} \\
& \leq &\left(   \sqrt{8\ln |\mathcal{L}_n|}  + \frac{ \ln |\mathcal{L}_n|}{\sqrt{\ell}}\right)\left(\gamma + 3 \overline{\eta}(T)   \right)  \\
 & \leq& 4 \sqrt{\ln |\mathcal{L}_n|} \left(\gamma + 3
   \overline{\eta}(T)   \right)  \\
  & \leq & 4 \, \gamma\, \sqrt{\ln |\mathcal{L}_n|}  \left(1 + \frac{3r_n}{\sqrt{J}}   \right)  \, 
\end{eqnarray*}
where we have used $\ln |\mathcal{L}_n| \leq {\ln(\log_2(n))}\leq {\ln(n)}\leq \ell  $, for $n \geq 2$.  
Combining the different bounds leads to the upper bound on $\mathbb{E}[Z^a \mid T]$.
\end{proof}

\subsection{Proof of Theorem \ref{thm:adapt-hill-estim}}

Throughout this proof, let
\begin{align*}
  T_n &=\exp \left(Y_{(k_n+1)}\right)\\
  \xi_n & = c_1 \sqrt{\ln \log_2 n} +c'_1 \quad \text{where }c_1, c'_1 \text{ are defined in Section \ref{sec:conc-ineq-hill}},  \\
z_\delta &= (1+3 r_n/\sqrt{k_n})\left(\xi_n+
  \sqrt{8 \ln  \left(2/\delta \right)} + \frac{\ln  \left(2/\delta \right)}{\sqrt{\ell_n}}\right) \\
\overline{z}_\delta  &= (1+3 r_n/\sqrt{\ell_n})\left(\xi_n+
  \sqrt{8 \ln  \left(2/\delta \right)} + \frac{\ln  \left(2/\delta \right)}{\sqrt{\ell_n}}\right) 
% \\ y_\delta &=5+\sqrt{2 \ln  \left(8/\delta \right)}+\frac{\ln \left(2/\delta \right)}{\sqrt{c_3      \ln n} }
\, . 
\end{align*}
Let us define the events $E1$ 
and $E_2$  as
\begin{eqnarray*}
 E_1&=& \Big\{ c_2 \ln n \leq i \leq k_n, \, \sqrt{i} \left| \widehat{\gamma}(i) -\EXP [\widehat{\gamma}(i)\mid T_n]\right|\leq  \gamma z_\delta \Big\}\, , \\
  E_2 &=& \Big\{ T_n  \geq \frac{n}{k_n^\delta} \Big\} \text{ with } k_n^\delta= k_n+2{\ln \left(1/\delta \right)}+\sqrt{2k_n \ln (1/\delta)}\pt
\end{eqnarray*}

The fact that $\mathbb{P}(E_2) \geq 1-\delta$ follows from the following reformulation of 
 Proposition 4.3 from \citep{boucheronthomas2012} (a proof is given in Appendix \ref{proof:prop:right:tail:order:stat}).
 \begin{prp}\label{prop:right:tail:order:stat}
  For $\delta\in (0,1)$, with probability larger that $1-\delta$,
 \[
  \exp(Y_{(k+1)}) \geq  \frac{n}{k^\delta}  \qquad \text{ with } k^\delta=  k + 2\ln(1/\delta) +\sqrt{2 k \ln(1/\delta)}.
  \]
  where $Y_{(k+1)}$ is the $(k+1)$th largest order statistic of an exponential sample of size $n$.
 \end{prp}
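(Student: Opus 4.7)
My plan is to reduce the event $\{\exp(Y_{(k+1)})\geq n/k^\delta\}$ to a one-sided lower tail for a binomial random variable via the uniform quantile transform, and then to close the estimate with a standard sub-Gaussian Chernoff inequality.

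Setting $U_i = \exp(-Y_i)$ produces an i.i.d.\ uniform $(0,1)$ sample, and because $-\ln$ is decreasing, the $(k+1)$th largest $Y_i$ corresponds to the $(k+1)$th smallest $U_i$; denote the latter by $U_{[k+1]}$, so that $\exp(-Y_{(k+1)}) \stackrel{d}{=} U_{[k+1]}$. The event of interest then rewrites as
\[
\{\exp(Y_{(k+1)}) \geq n/k^\delta\} \;=\; \{U_{[k+1]} \leq k^\delta/n\} \;=\; \{S \geq k+1\},
\]
where $S = |\{i \leq n : U_i \leq k^\delta/n\}|$ is binomial with parameters $(n, k^\delta/n)$ and mean $\mathbb{E}[S] = k^\delta$. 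Since $k^\delta > k$, the task reduces to upper bounding the lower-tail probability $\mathbb{P}(S \leq k)$ by $\delta$.

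I would then invoke the sub-Gaussian Chernoff bound for the binomial lower tail: from $\ln(1-p+pe^{-\lambda})\leq p(e^{-\lambda}-1)\leq -p\lambda + p\lambda^2/2$ for $\lambda\geq 0$, one reads off that $\mathbb{E}[S]-S$ is sub-Gaussian with variance proxy $\mathbb{E}[S]=k^\delta$, so that
\[
\mathbb{P}(S \leq \mathbb{E}[S]-t) \;\leq\; \exp\bigl(-t^2/(2k^\delta)\bigr)
\]
for every $t>0$. Taking $t = k^\delta-k = \sqrt{2k\ln(1/\delta)} + 2\ln(1/\delta)$ reduces the proof to the algebraic inequality $(k^\delta-k)^2 \geq 2k^\delta\ln(1/\delta)$, which is immediate after expanding both sides and noticing that the difference equals $2\ln(1/\delta)\sqrt{2k\ln(1/\delta)}\geq 0$. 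Nothing in the argument is truly delicate; the only point requiring a touch of care is that the sub-Gaussian exponent is calibrated against the variance proxy $k^\delta$ rather than $k$, and checking that this extra slack is absorbed by the cross-term in $(k^\delta-k)^2$ is precisely what motivates the Bernstein-style definition of $k^\delta$.
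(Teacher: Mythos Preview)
Your argument is correct. The reduction via $U_i=\exp(-Y_i)$ to the binomial lower tail $\mathbb{P}(S\le k)$ with $S\sim\mathrm{Bin}(n,k^\delta/n)$ is valid, the sub-Gaussian bound $\mathbb{P}(S\le \EXP S-t)\le\exp(-t^2/(2\EXP S))$ follows from the inequality $e^{-\lambda}-1\le -\lambda+\lambda^2/2$ for $\lambda\ge 0$ exactly as you write, and the algebra $(k^\delta-k)^2-2k^\delta\ln(1/\delta)=2\ln(1/\delta)\sqrt{2k\ln(1/\delta)}\ge 0$ is clean. (The edge case $k^\delta\ge n$ is harmless since then $n/k^\delta\le 1\le\exp(Y_{(k+1)})$ deterministically.)

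This is a genuinely different route from the paper's. The paper does not pass through the uniform/binomial representation at all; instead it quotes a concentration bound for exponential order statistics from \citep{boucheronthomas2012}, namely $\mathbb{P}\{\exp(Y_{(k+1)})\ge (n/k)\mathe^{-z}\}\ge 1-\exp(-2k\sinh^2(z/2))$, sets $z=2\arsinh\sqrt{\ln(1/\delta)/(2k)}$ so that the right-hand side equals $\delta$, and then bounds $\mathe^z$ from above using $\arsinh(x)=\ln(x+\sqrt{1+x^2})$ to recover $k^\delta/k$. Your approach is more elementary and fully self-contained, needing nothing beyond a one-line Chernoff computation; the paper's version stays within the concentration-of-measure framework that drives the rest of the article and reuses machinery the authors had already built. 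Both lead to the same $k^\delta$.
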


By Theorem \ref{prp:hill:concentration}, $\mathbb{P}(E_1 \mid E_2) \geq 1-\delta$. Hence, 
 the event $E_1 \cap E_2$   has probability at least $(1- \delta)^2 \geq 1-2\delta$.

Under $ E_2$, \begin{enumerate}[i)]
\item $\overline{\eta}(T_n)\leq \gamma r_n/\sqrt{k_n}$.  
\item for all $\ell_n\leq i \leq {k_n}$, $|\gamma- \mathbb{E}[\widehat{\gamma}(i)\mid T_n]|\leq \overline{\eta}(T_n).$
\end{enumerate}

The first step of the proof consists in checking that under $E_1 \cap E_2$, the selected index is not smaller than $k_n$. It suffices to check that for all $i,k \text{ such that }  \ell_n\leq i < k < k_n$, 
\begin{eqnarray*}
\sqrt{i}\left| \widehat{\gamma}(i) - \widehat{\gamma}(k)\right| \le r_n(\delta) \widehat{\gamma}(i) \, .
\end{eqnarray*}
For all $i \in \{ \ell_n , \ldots, k_n\}$ , 
\begin{eqnarray*}
  \gamma -\widehat{\gamma}(i) &\leq & \left| \gamma - \EXP[\widehat{\gamma}(i)\mid T_n]\right| + \left|\widehat{\gamma}(i)  -\EXP[\widehat{\gamma}(i)\mid T_n]\right|\\
 &\leq & \overline{\eta}(T_n)  + \frac{\gamma z_\delta}{\sqrt{i}}  \\
&\leq & \frac{\gamma r_n}{\sqrt{k_n}} + \frac{\gamma z_\delta}{\sqrt{i}} 
\end{eqnarray*}
so that
\begin{displaymath}
  \frac{\widehat{\gamma}(i)}{\gamma} \geq  1  -\frac{r_n+z_\delta}{\sqrt{i}} \, . 
\end{displaymath}
Meanwhile, for all $i,k$, 
\begin{eqnarray*}
\lefteqn{\left| \widehat{\gamma}(i)-\widehat{\gamma}(k)\right| }\\
&\leq &\underbrace{\Big| \widehat{\gamma}(i)-\EXP[\widehat{\gamma}(i)\mid T_n]\Big|}_{\textsc{(i)}}+ \underbrace{\left|  \EXP[\widehat{\gamma}(i)-\widehat{\gamma}(k)\mid T_n]\right|}_{\textsc{(ii)}} + \underbrace{\left| \widehat{\gamma}(k)-\EXP[\widehat{\gamma}(k)\mid T_n]\right|}_{\textsc{(iii)}} \, . 
\end{eqnarray*}
%Using again \eqref{control:process}, 
Under $E_1 \cap E_2$, for $\ell_n \leq i < k \leq k_n$, 
\begin{displaymath}
\textsc{(i)}+\textsc{(iii)} \le \gamma z_\delta \left( \frac{1}{\sqrt{i}} + \frac{1}{\sqrt{k}}\right) \le \frac{2\gamma}{\sqrt{i}} z_\delta \pt
\end{displaymath} 
% under $E_2$, thanks to Assumption i) in the theorem statement, 
% \begin{eqnarray*}
%  \textsc{(ii)}\\ 
% & \leq & \left|\EXP[\widehat{\gamma}(i)-\widehat{\gamma}(k)\mid T_n]-\EXP[\widehat{\gamma}(i)-\widehat{\gamma}(k)]\right| +
%  \left|\EXP[\widehat{\gamma}(i)-\gamma]\right|+  \left|\EXP[{\gamma}-\widehat{\gamma}(k)]\right| \\
% & \leq &  \frac{4 \overline{\eta}(1)}{\sqrt{k}} y_\delta + \gamma r_n \left( \frac{1}{\sqrt{i}}+ \frac{1}{\sqrt{k}}\right) \\
% & \leq & \frac{4 \xi_n \gamma}{\sqrt{k}} y_\delta +  \frac{2 r_n \gamma}{\sqrt{i}}  \, .
% \end{eqnarray*}
Under $E_2$,
\begin{eqnarray*}
 \textsc{(ii)}
& \leq &  \left| \EXP[\widehat{\gamma}(i)-{\gamma}\mid T_n]\right| + \left| \EXP[ \gamma-\widehat{\gamma}(k)\mid T_n]\right| \\
& \leq &  2 \overline{\eta}(T_n) \\
& \leq &  2 \gamma r_n /\sqrt{k_n} \, . 
\end{eqnarray*}

Plugging upper bounds on \textsc{(i)}, \textsc{(ii)} and \textsc{(iii)}, it comes that, under $E_1 \cap E_2$, for all $k \le k_n-1$ and for all $i \in \{ \ell_n, \ldots,k\} $,
\begin{eqnarray*}
\sqrt{i} \frac{\left| \widehat{\gamma}(i)-\widehat{\gamma}(k)\right|}{\gamma} &\leq &  
2 z_\delta + 2 r_n  \, . 
% 3 z_\delta +4  \xi_n y_\delta +2 r_n  \\
% & \leq &  3 \left(\xi_n+ \sqrt{8 \ln  \left(2/\delta \right)} + \frac{\ln  \left(2/\delta \right)}{\sqrt{c_3\ln n}} \right) + 4  \xi_n y_\delta +2 r_n \\
% & \leq &  \xi_n\left(3+ 4 y_\delta\right)   +2 r_n  +  3 \sqrt{8 \ln  \left(2/\delta \right)} + \frac{3\ln  \left(2/\delta \right)}{\sqrt{c_3\ln n}} \, . 
\end{eqnarray*}

In order to warrant that, under %$E_1\cap E_2 \cap E_2$,  
$E_1 \cap E_2$, %
for all $k < k_n$ and for all $i$ such that $c_2 \ln n \le i \le k$, $\sqrt{i} \left| \widehat{\gamma}(i)-\widehat{\gamma}(k)\right| \le r_n(\delta) \widehat{\gamma}({i})$, it is enough to have
\begin{displaymath}
  2 (z_\delta +  r_n)  \leq r_n(\delta) \left(1 -\frac{r_n+z_\delta}{\sqrt{i}}\right)  \, . 
\end{displaymath}
The last inequality holds because 
\begin{displaymath}
	2 (z_\delta +  r_n)  \leq r_n(\delta) \left(1 -\frac{r_n+z_\delta}{\sqrt{\ell_n}}\right)
\end{displaymath}
by definition of $r_n(\delta)$. 

Hence, with probability larger than $(1- \delta)^2$, $E_1 \cap E_2$ is realised, and under $E_1 \cap E_2$,
$\widehat{k}_n \geq k_n$.

We now check that if $\widehat{k}_n\geq k_n$, the risk of $\widehat{\gamma}(\widehat{k}_n)$ is not much larger than the risk of $\widehat{\gamma}(k_n)$. 

\begin{eqnarray*}
\left| \gamma - \widehat{\gamma}(\widehat{k}_n) \right| &\le& \left|
  \gamma - \widehat{\gamma}(k_n)  \right| + \left|
  \widehat{\gamma}(k_n)   -
  \widehat{\gamma}(\widehat{k}_n) \right| \\
& \leq & \left|
  \gamma - \widehat{\gamma}(k_n)  \right| +
\tfrac{r_n(\delta)
  \widehat{\gamma}(k_n)}{\sqrt{k_n}} 
  %%\\
% & \leq & \left|
%   \gamma - \widehat{\gamma}(k_n)  \right|\left(
%   1+\tfrac{r_n(\delta)}{\sqrt{k_n}}\right) + \tfrac{r_n(\delta)
%  \gamma}{\sqrt{k_n}} 
 \pt
\end{eqnarray*}
Therefore, under $E_1\cap E_2$, 
\begin{equation}\label{ineq:oracle}
\left| \gamma - \widehat{\gamma}(\widehat{k}_n) \right|  \le  \left|
  \gamma - \widehat{\gamma}(k_n)  \right|\left(
  1+\tfrac{r_n(\delta)}{\sqrt{k_n}}\right) + \tfrac{r_n(\delta)
 \gamma}{\sqrt{k_n}} \pt
\end{equation}

Now, consider the event %$E_1 \cap E_2 \cap E_2 \cap E_3$ 
$E_1 \cap E_2 \cap E_3$
with 
\begin{displaymath}
E_3=\Bigg\{ \sqrt{k_n}\left| \widehat{\gamma}(k_n) -\EXP [\widehat{\gamma}(k_n)\mid T_n]\right| \leq \left(\gamma+3 \overline{\eta}(T_n) \right) \Big( \sqrt{8 \ln  \left(2/\delta \right)} + \frac{\ln  \left(2/\delta \right)}{\sqrt{k_n}}\Big)\Bigg\} \pt
\end{displaymath} 
Since, $\mathbb{P} (E_3 \mid E_2 ) \ge 1-\delta$, thanks to Statement i) from Theorem \ref{prp:hill:concentration}, the event 
$E_1 \cap E_2 \cap E_3$ has probability at least $(1- \delta)(1-2 \delta)\geq 1-3 \delta$. 

Then, by definition of $k_n$,  under $E_2$, 
\[
\left|
  \gamma -\mathbb{E}[\widehat{\gamma}({k}_n)\mid T_n]\right|\leq \overline{\eta}(T_n) \leq \gamma r_n/\sqrt{k_n}  \, .
  \]
  Hence, under   $E_2 \cap E_3$, 
\begin{eqnarray*}
  \Big|\widehat{\gamma}({k}_n)-\gamma\Big| &\leq & |\gamma -\mathbb{E}[\widehat{\gamma}({k}_n)\mid T_n]|+ |
  \widehat{\gamma}({k}_n) -\mathbb{E}[\widehat{\gamma}({k}_n)\mid T_n]| \\
  & \leq & \frac{\gamma}{\sqrt{k_n}}\left(r_n +\left(1+ \frac{3r_n}{\sqrt{k_n}}\right)\Big( \sqrt{8 \ln  \left(2/\delta \right)} + \frac{\ln  \left(2/\delta \right)}{\sqrt{k_n}}\Big) \right)
\, . 
\end{eqnarray*}
Therefore, plugging this bound into \eqref{ineq:oracle}, with probability larger than $1-3\delta$, 
\begin{eqnarray*}
\lefteqn{\Big| \widehat{\gamma}(\widehat{k}_n)-\gamma  \Big|  }\\
&\leq & \frac{\gamma}{\sqrt{k_n}}  \left(r_n(\delta) + \left( r_n + \left(1+ \frac{3r_n}{\sqrt{k_n}}\right)\left(\frac{\sqrt{8 \ln \left(2/\delta \right)}}{2}
 + \frac{\ln \left(2/\delta \right)}{2\sqrt{k_n}}\right)\right)\left(
  1+\frac{r_n(\delta)}{\sqrt{k_n}}\right) \right) \\
  & \leq & \frac{2 \gamma r_n(\delta)}{\sqrt{k_n}} (1+ \alpha(\delta,n))\, ,
\end{eqnarray*}
where 
\begin{eqnarray*}
	\alpha(\delta,n) & = &  \frac{r_n}{2\sqrt{\ell_n}} 
	+\frac{\sqrt{ \ln(2/ \delta )}}{ r_n(\delta)}  \left(1 + \frac{3r_n(\delta)}{\sqrt{\ell_n}}\right)^2  \, . 
\end{eqnarray*}
\subsection{Proof of Corollary \ref{cor:adapt-hill-estim-2ndrv}} 
If, for some $C>0$  and $\rho<0$,
\begin{displaymath}
\overline{\eta}(t)
  \leq C t^\rho \, , 
\end{displaymath}
then, by the definition of $k_n$,
\begin{displaymath}
  \frac{\gamma r_n}{\sqrt{k_n+1}} \leq C \left( \frac{n}{(k_n+1)^\delta} \right)^\rho \, ,
\end{displaymath}
which entails that
\begin{displaymath}
  \frac{\gamma r_n}{\sqrt{k_n+1}} \leq C \left( \frac{n}{\left(\sqrt{k_n+1}+\sqrt{2 \ln(1/\delta)}\right)^2} \right)^\rho \, .
\end{displaymath}
Solving this inequality leads to
\begin{displaymath}
	\sqrt{k_n+1}  \geq \sqrt{\left(\frac{ \gamma r_n}{C}\right)^{2/(1+2|\rho|)}n^{2|\rho|/(1+2|\rho|)}} - \frac{2|\rho|\sqrt{2 \ln(1/\delta)}}{1+2|\rho|} 
\end{displaymath}
and finally to
\begin{displaymath}
	k_n \geq \frac{1}{2} \left( \frac{\gamma r_n}{C}\right)^{1/(1+2|\rho|)} n^{|\rho|/(1+2|\rho|)}-
	 2\left(\frac{2|\rho|\sqrt{2 \ln(1/\delta)}}{1+2|\rho|}\right)^2
	-1 \, . 
\end{displaymath}
Thus, for sufficiently large $n$,  there exists a constant $c$ depending on $\rho,\delta$ such that 
\begin{displaymath}
  \sqrt{k_n}  \geq \left( \frac{\gamma r_n}{c}\right)^{1/(1+2|\rho|)} n^{|\rho|/(1+2|\rho|)} \, .
\end{displaymath}
Starting from Equation \eqref{rem:asym:bound:risk} of Theorem \ref{thm:adapt-hill-estim}, with probability $1-3\delta$, 
\begin{eqnarray*}
\Big| \widehat{\gamma}(\widehat{k}_n)-\gamma  \Big| 
&\leq & 16\gamma\sqrt{\frac{2\ln( (2/\delta) \log_2 n)}{k_n}}  \left(1 + \alpha(\delta,n)\right) \, ,
\end{eqnarray*}
and, there exists a constant $\kappa_{C, \delta,\rho}$, depending on $C, \delta$ and $\rho$, such that 
\begin{eqnarray*}
 \sqrt{\frac{\ln( (2/\delta) \log_2 n)}{k_n}}  \le \kappa_{C,\delta,\rho} \gamma^{-1/(1+2|\rho|)} \left(\frac{\ln((2/\delta) \log_2 n)}{n}\right)^{|\rho|/(1+2|\rho|)} \pt
\end{eqnarray*}
Hence, with probability larger than $1-4\delta$, 
\begin{eqnarray*}
\Big| \widehat{\gamma}(\widehat{k}_n)-\gamma  \Big| 
&\leq &\kappa_{C, \delta,\rho} \left(\frac{\gamma^2\ln((2/\delta) \log_2 n)}{n}\right)^{|\rho|/(1+2|\rho|)} \left(1+\alpha(\delta,n) \right)\pt
\end{eqnarray*}

\section{Simulations}
\label{sec:simulations}
Risk bounds like Theorem  \ref{thm:adapt-hill-estim} and Corollary
\ref{cor:adapt-hill-estim-2ndrv} are conservative. For all practical
purposes, they are just meant
to be reassuring guidelines. In this numerical section, we intend to
shed some light on the following issues:
\begin{enumerate}
\item Is there a reasonable way to calibrate the threshold
  $r_n(\delta)$  used in the definition of $\widehat{k}_n$? How does
  the method perform if we choose $r_n(\delta)$  close to $\sqrt{2 \ln
    \ln (n)}$? 
\item  How large is  the ratio between the risk of
  $\widehat{\gamma}(\widehat{k}_n)$  and the  risk of $\widehat{\gamma}(k_n^*)$
  for moderate sample sizes?    
\end{enumerate}

The finite-sample performance of the data-driven index selection
method described and analysed in Section \ref{sec:adapt-hill-estim}
has been assessed by Monte-Carlo simulations. Computations have been
carried out in \texttt{R} using packages \texttt{ggplot2} \citep{wickham2009ggplot2},
\texttt{knitr}, \texttt{foreach}, \texttt{iterators}, \texttt{xtable}
and  \texttt{dplyr} \citep[see][for a modern account of the R
environment]{wickham2014advanced}.  To get into the details, we investigated the performance of index selection methods on samples of sizes $1 000, 2 000$
and $10 000$ from the collection of distributions listed in Table \ref{tab-risk-profiles}. The list comprises the following distributions
\begin{enumerate}[i)]
\item Fr\'echet distributions $F_\gamma(x)= \exp(x^{-1/\gamma})$ for
  $x>0$ and $\gamma \in \{ 0.2, 0.5,1\}$.
\item Student distributions  $t_\nu$ with $\nu \in \{ 1,2,4,10\}$  degrees of freedom.
\item The log-gamma distribution with density proportional to $(\ln (x))^{2-1}x^{-3-1}$, which means $\gamma=1/3$ and $\rho=0$.
\item The L\'evy distribution  with density $\sqrt{{1}/{(2\pi)}} \exp({ -\tfrac{1}{2 x}})/x^{3/2}$, $\gamma = 2$ and $\rho =-1 $ (this is the distribution of 
$1/X^2$ when $X \sim \mathcal{N}(0,1)$).
\item The $H$ distribution is defined by $\gamma=1/2$  and von Mises function equal to $\eta(s)=(2/s) \ln 1/s$. This 
distribution satisfies the second-order regular variation condition with  $\rho= -1$ but does not satisfy Condition \eqref{hall:condition}.
\item Two Pareto change point distributions with distribution functions   $$ \overline{F}(x) =  x^{-1/\gamma'} \mathbb{1}_{\{1\leq x \leq \tau\}} +
      \tau^{-1/\gamma'} (x/\tau)^{-1/\gamma} \mathbb{1}_{\{x \geq \tau\}}  \,
      $$ and  $\gamma\in \{ 1.5, 1.25\}$,  $\gamma' = 1$, and thresholds $\tau$ adjusted in such a way that they correspond to quantiles of order $1-1/15$ and $1-1/25$, respectively.
\end{enumerate}
Fr\'echet, Student, log-gamma distributions were used as benchmarks by \citep{MR1632189}, \citep{MR1821820} and \citep{carpentierkim2014}.  

Table~\ref{tab-risk-profiles}, which is complemented by
Figure~\ref{fig:risk-comp}, describes the difficulty of tail index
estimation from samples of the different distributions. % Except the
Monte-Carlo estimates of the standardised root mean square error (\textsc{rmse}) of 
Hill estimators  
$$\mathbb{E}\left[\left(\widehat{\gamma}(k)/\gamma-1\right)^2\right]^{1/2}$$
are represented as functions of the number of order statistics $k$ for samples of  size $10 000$ from the sampling distributions. 
All curves exhibit a common pattern:  for small values of $k$, the \textsc{rmse} is dominated by the variance
term and scales like $1/\sqrt{k}$. Above a threshold that depends on the
sampling distribution but that is not completely characterised by the
second-order regular variation index, the \textsc{rmse} grows at a rate that may
 reflect the second-order regular variation property (if any) of the distribution.
Not too surprisingly, the three Fr\'echet distributions exhibit the same
risk profile. The three curves are almost undistinguishable.  The Student distributions
illustrate the impact of the second-order parameter on the difficulty of the index selection problem.
For sample size $n=10 000$, the optimal index for $t_{10}$  is smaller than $30$, it is smaller than the usual recommendations. 
For such moderate sample sizes, distribution $t_{10}$ seems as hard to handle as the $\log$-gamma distribution which usually 
fits in the Horror Hill Plot gallery. The $1/2$-stable  L\'evy
distribution and the $H$-distribution behave very differently. Even though they both have second-order parameter $\rho$
equal to $-1$, the $H$ distribution seems almost as challenging as the $t_4$  distribution while the L\'evy distribution looks much easier than
the Fr\'echet distributions. The Pareto change point distributions exhibit an abrupt transition.  

\begin{table}[h]
\centering
\caption{Estimated oracle index $k_n^*$  and standardised
  \textsc{rmse}
  $\mathbb{E}[(\gamma-\widehat{\gamma}(k_n^*))^2]^{1/2}/\gamma$ for
  benchmark distributions. Estimates were computed from $5000$
  replicated experiments on samples of size $10 000.$}
\label{tab-risk-profiles}
\begin{tabular}{lcccc}
  \toprule
d.f. & $\gamma$ & $\rho$ & $k_n^*$& RMSE \\ 
  \midrule
$F_{0.2}$& 0.2 & 1.0 & 1132 & 3.7e-02 \\ 
 $ F_{0.5}$  & 0.5 & 1.0 & 1145 & 3.6e-02 \\ 
 $ F_1$ & 1.0 & 1.0 & 1155 & 3.6e-02 \\ 
 $t_1$ & 1.0 & 2.0 & 1161 & 3.3e-02 \\ 
 $t_2$ & 0.5 & 1.0 & 341 & 6.5e-02 \\ 
  $t_ 4$ & 0.2 & 0.5 & 77 & 1.6e-01 \\ 
  $t_{10}$& 0.1 & 0.2 & 15 & 5.3e-01 \\ 
  H & 0.5 & 1.0 & 130 & 1.1e-01 \\ 
  log-gamma & 0.3 & 0.0 & 213 & 1.6e-01 \\ 
   Stable & 2.0 & 1.0 & 3172 & 2.0e-02 \\ 
 Pcp & 1.5 & 0.3 & 943 & 3.3e-02 \\ 
  Pcp (bis) & 1.2 & 0.2 & 593 & 4.2e-02 \\ 
   \bottomrule
\end{tabular}
%%%%%%%%%%

\end{table}

\begin{figure}[h]\centering

  \includegraphics[width=.8\textwidth]{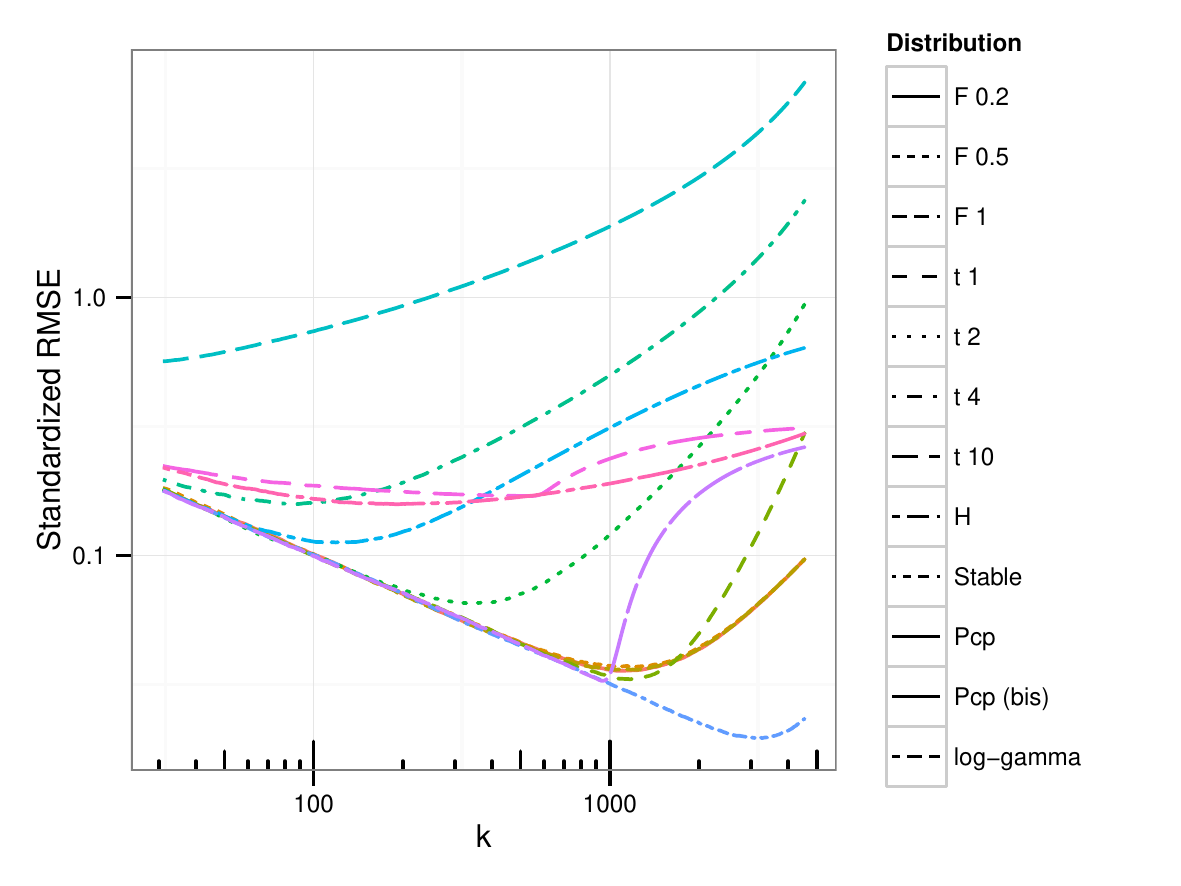}
 \caption{
 Monte-Carlo estimates of the standardised root mean square error (\textsc{rmse}) of 
Hill estimators %% $\mathbb{E}\left[\left(\widehat{\gamma}(k)/\gamma-1\right)^2\right]^{1/2}$
as a  function of the number of order statistics $k$ for samples of  size $10 000$ from the sampling distributions. 
}

  \label{fig:risk-comp}
\end{figure}

Index $\widehat{k}_n(r_n)$ was computed according to the following rule
{\footnotesize
\begin{equation}\label{eq:1}
   \widehat{k}_n(r_n) =  \min \left\{ k  \colon 30  \leq k \leq n \text{ and } \exists i \in \{30,\ldots,k\} \, , \left| \widehat{\gamma}(i)-\widehat{\gamma}(k)\right|>\frac{r_n \widehat{\gamma}(i)}{\sqrt{i}}\right\} -1 
\end{equation}}
with $r_n= \sqrt{c \ln \ln n}$  where $c=2.1$ unless otherwise specified. 

The Fr\'echet, Student, $H$ and stable distributions all fit into the
framework considered by \citep{MR1632189}. They provide a favorable
ground for comparing the performance of the optimal index selection
method described by \citet{MR1632189} which attempts to take advantage
of the second-order regular variation property and the performance of
the simple selection rule described in this paper. 

Index $\widehat{\gamma}(\widehat{k}^{\textsc{dk}}_n)$ was computed following the recommandations from Theorem 1 and discussion in \citep{MR1632189}
\begin{equation}\label{eq:2}
\widehat{k}^{\textsc{dk}}_n = (2 |\widehat{\rho}|+1)^{-1/|\widehat{\rho}|}   \left( 2 \widehat{\rho} \widehat{\gamma}\right)^{1/(1+2|\hat{\rho}|)} \left(\frac{ \widehat{k}_n(r_n^\zeta)}{(\widehat{k}_n(r_n))^\zeta}  \right)^{1/(1-\zeta)}
\end{equation}
where $\widehat{\rho}$ should  belong to a consistent family of estimators of $\rho$ (under a second-order regular variation assumption),  $\widehat{\gamma}$
should be a preliminary estimator of $\gamma$ such as
$\widehat{\gamma}(\sqrt{n})$,  $\zeta=.7$, and $r_n= 2n
^{1/4}$. Following the advice from \citep{MR1632189}, we replaced $
|\widehat{\rho}|$ by $1$.  Note that the method for computing
$\widehat{k}^{\textsc{dk}}_n$ depends on a variety of tunable
parameters.%  We have not been able to completely  reproduce the results

 Comparison between performances of
  $\widehat{\gamma}(\widehat{k}_n(r_n))$  and
  $\widehat{\gamma}(\widehat{k}_n^{\textsc{dk}})$ are reported in
  Tables \ref{tab:ratios:1} and \ref{tab:ratios:2}. For each
  distribution from Table \ref{tab-risk-profiles},  for sample sizes
  $n=1000, 2000, \text{ and } 10 000$, $5000$ experiments were replicated.
 As pointed out in \citep{MR1632189}, on the sampling distributions
 that satisfy a second-order regular variation property, carefully
 tuned $\widehat{k}_n^{\textsc{dk}}$ is able to take advantage of
 it. Despite its computational and conceptual simplicity and the
 fact that it is almost parameter free,  the  estimator
 $\widehat{\gamma}(\widehat{k}_n(r_n))$ only suffers a moderate loss
 with respect to the oracle. When $|\rho|=1$, the observed ratios are
 of the same order as $(2\ln \ln n)^{1/3}\approx 1.65$. Moreover, 
whereas $\widehat{\gamma}(\widehat{k}_n^{\textsc{dk}})$ behaves
erratically when facing Pareto change point distributions,
$\widehat{\gamma}(\widehat{k}_n(r_n))$
behaves consistently. 

\begin{table}[ht]
\centering
\caption{Ratios between median selected indices $\widehat{k}_n(r_n)$ (Lepski),
  $\widehat{k}^{\textsc{dk}}_n$ (Drees-Kaufmann) and estimated oracle
  index $k_n^*$.} 
\label{tab:ratios:1}
\begin{tabular}{lccccccc}
  \toprule
  \multirow{2}{*}{d.f.} & \multirow{2}{*}{$\gamma$} & \multicolumn{3}{c}{$\widehat{k}^{\textsc{dk}}_n/k_n^*$} &
                                                         \multicolumn{3}{c}{$\widehat{k}_n(r_n)/k_n^* $}\\
\cmidrule(lr){3-5} \cmidrule(l){6-8}
 & & $n=1000$ & 2000 & 10000 & 1000 & 2000 & 10000 \\ 
  \midrule
$F_{0.2}$ & 0.2 & 0.61 & 0.67 & 0.94 & 2.94 & 2.97 & 3.47 \\ 
$  F_{0.5}$ & 0.5 & 1.12 & 1.18 & 1.45 & 2.90 & 2.87 & 2.91 \\ 
  $F_ 1$ & 1 & 1.76 & 2.05 & 2.32 & 2.90 & 3.10 & 2.93 \\ 
  $t_1$ & 1 & 1.33 & 1.55 & 1.98 & 2.03 & 2.16 & 2.16 \\ 
  $t _2$ & 0.5 & 1.00 & 0.99 & 0.91 & 3.05 & 3.06 & 2.96 \\ 
  $t_4$ & 0.25 & 1.27 & 1.28 & 1.18 & 5.62 & 5.50 & 5.30 \\ 
  $t_{10}$ & 0.1 & 2.00 & 1.54 & 2.28 & 13.87 & 10.92 & 14.12 \\ 
  H & 0.5 & 0.41 & 0.35 & 0.30 & 5.14 & 4.97 & 4.96 \\ 
  Stable & 2 & 0.97 & 0.95 & 1.04 & 1.43 & 1.41 & 1.55 \\ 
  Pcp & 1.5 & 1.85 & 0.45 & 0.15 & 1.32 & 1.21 & 1.10 \\ 
  Pcp (bis) & 1.25 & 3.29 & 3.03 & 2.45 & 1.83 & 1.50 & 1.22 \\ 
  log-gamma & 0.33 & 5.13 & 7.71 & 12.41 & 10.50 & 12.99 & 12.40 \\ 
   \bottomrule
\end{tabular}
\end{table}

\begin{table}[ht]

\centering
\caption{Ratios between median \textsc{rmse}   and median
  optimal \textsc{rmse}.} 
\label{tab:ratios:2}
\begin{tabular}{lccccccc}
  \toprule
 \multirow{2}{*}{d.f.} & \multirow{2}{*}{$\gamma$} & \multicolumn{3}{c}{$\textsc{rmse}(\widehat{\gamma}(\widehat{k}^{\textsc{dk}}_n))/\textsc{rmse}(\widehat{\gamma}(k_n^*))$} &
                                                         \multicolumn{3}{c}{$\textsc{rmse}(\widehat{\gamma}(\widehat{k}_n(r_n)))/\textsc{rmse}(\widehat{\gamma}(k_n^*))$}\\
\cmidrule(lr){3-5} \cmidrule(l){6-8}
 & & $n=1000$ & 2000 & 10000 & 1000 & 2000 & 10000 \\ 
  \midrule
$F_{0.2}$ & 0.2 & 1.12 & 1.12 & 1.02 & 2.06 & 2.26 & 2.69 \\ 
 $ F_{0.5}$ & 0.5 & 1.03 & 1.03 & 1.14 & 2.12 & 2.23 & 2.70 \\ 
  $F_1$ & 1 & 1.22 & 1.31 & 1.59 & 2.07 & 2.23 & 2.64 \\ 
  $t_1$ & 1 & 1.26 & 1.34 & 1.74 & 2.31 & 2.39 & 3.11 \\ 
  $t_2$ & 0.5 & 1.11 & 1.08 & 1.05 & 2.06 & 2.09 & 2.20 \\ 
  $t_4$ & 0.25 & 1.10 & 1.07 & 1.04 & 1.85 & 1.81 & 1.84 \\ 
  $t _{10}$ & 0.1 & 1.10 & 1.09 & 1.08 & 1.76 & 1.72 & 1.64 \\ 
  H & 0.5 & 1.28 & 1.37 & 1.48 & 2.15 & 2.18 & 2.12 \\ 
  Stable & 2 & 1.01 & 0.99 & 0.98 & 1.99 & 2.52 & 3.60 \\ 
  Pcp & 1.5 & 4.25 & 1.66 & 2.52 & 2.50 & 2.68 & 3.63 \\ 
  Pcp (bis) & 1.25 & 3.38 & 4.47 & 7.45 & 2.43 & 2.56 & 3.10 \\ 
  log-gamma & 0.33 & 1.23 & 1.28 & 1.39 & 1.45 & 1.43 & 1.37 \\ 
   \bottomrule
\end{tabular}

\end{table}

Figure \ref{fig:pcp-risk-plot} concisely describes the behaviour of the
two index selection methods on samples from the Pareto change point
distribution with parameters $\gamma=1.5, \gamma'=1$ and threshold
$\tau$ corresponding to the $1-1/15$ quantile. The plain line
represents the standardised \textsc{rmse} of Hill estimators as a
function of selected index. This figure contains the superposition of
two density plots corresponding to $\widehat{k}^{\textsc{dk}}_n$ and
$\widehat{k}(r_n)$. The density plots were generated from $5000$
points with coordinates $(\widehat{k}(r_n),
|\widehat{\gamma}(\widehat{k}(r_n))/\gamma-1|)$ and $5000$ points with
coordinates  $(\widehat{k}^{\textsc{dk}}_n,
|\widehat{\gamma}(\widehat{k}^{\textsc{dk}}_n)/\gamma-1|)$. The
contoured and well-concentrated density plot corresponds to the
performance of $\widehat{\gamma}(\widehat{k}_n)$. The diffuse tiled
density plot corresponds to the performance of
$\widehat{k}^{\textsc{dk}}_n$. Facing Pareto change point samples, the
two selection methods behave differently. Lepski's rule detects 
correctly an abrupt change at some point and selects an index slightly
above that point. As the conditional bias varies sharply around the
change point, this slight over estimation of the correct index still
results in a significant loss as far as \textsc{rmse} is concerned. The
Drees-Kaufmann rule, fed with an a priori estimate of the second-order
parameter, picks out a much smaller index, and suffers a larger excess risk.

\begin{figure}[h]\centering
\includegraphics[width=.8\textwidth]{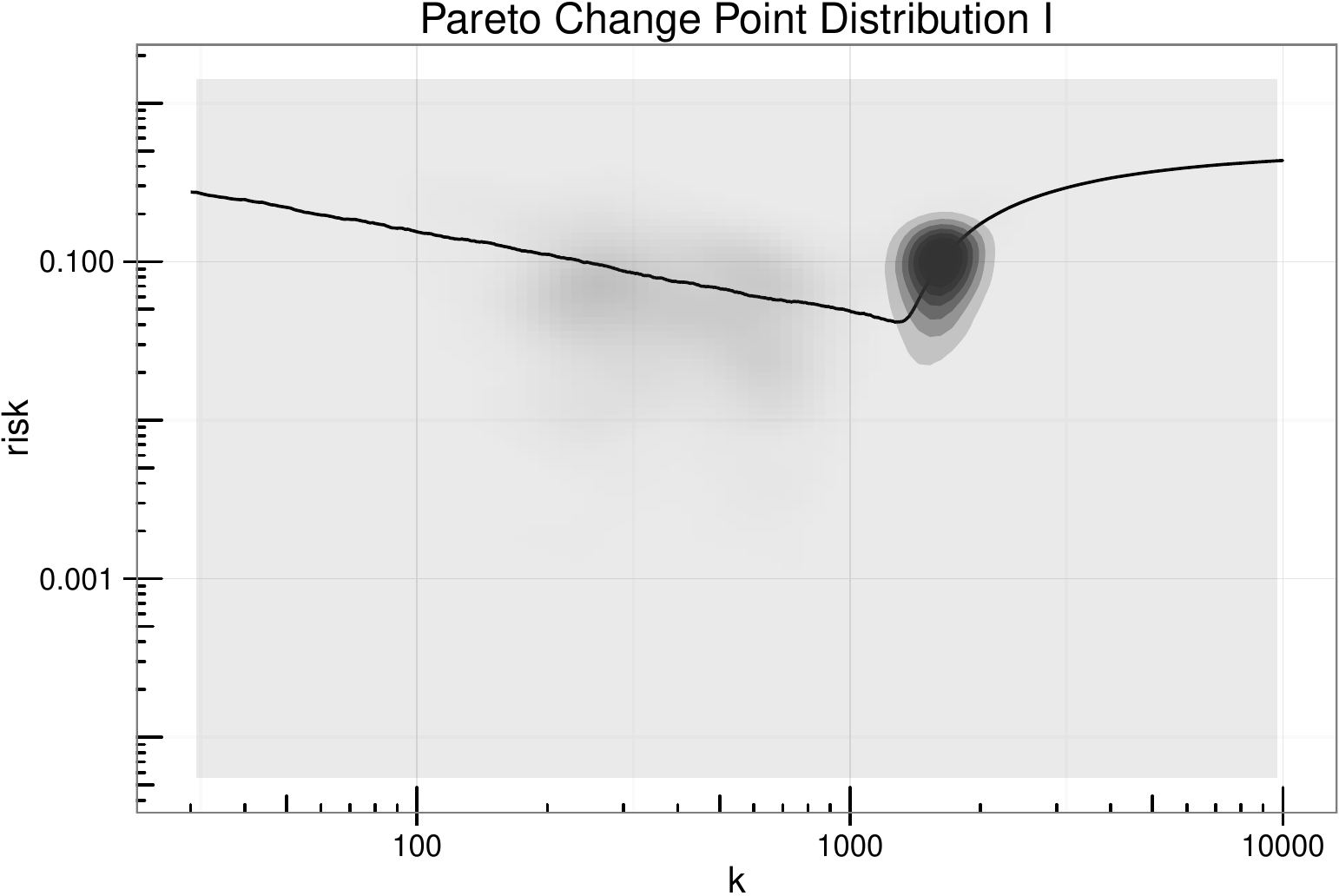}
  \caption{Risk plot for samples of size $10 000$ from  the Pareto
    change point distribution with parameters $\gamma=1.5, \gamma'=1$
    and threshold $\tau$ corresponding to the $1-1/15$ quantile. The
    concentrated density plot corresponds to points
    $(\widehat{k}(r_n),|\widehat{\gamma}(\widehat{k}(r_n))/\gamma-1| )$.}
  \label{fig:pcp-risk-plot}
\end{figure}

\flushleft{\textbf{Acknowledgement}} The authors are thankful to the editor and the referees for their careful reading and valuable suggestions, which led to detect an error and to a improved version of the paper. 
\bibliographystyle{abbrvnat}

%\bibliography{adaptHillejs-bib}

\appendix

\section{Calibration of the preliminary selection rule}
\label{sec:calibr-prel-select}
\sloppy{
\citet{DarErd56} establish  (among
other things) that letting $Z_n$ denote $\sup_{k\leq n} \sum_{i=1}^k (E_i-1)/\sqrt{k}$, where $E_i$, $1\leq i \leq k$, are independent exponentially distributed random variables, the sequence $\sqrt{2\ln\ln n} \left(Z_n-\sqrt{2\ln\ln n} -   \ln\ln\ln (n)/(2\sqrt{2\ln\ln n}) \right)$
 converges in distribution towards a translated Gumbel distribution.  In other words, asymptotically,  $Z_n$ behaves almost
like the maximum of $\ln n $ independent standard Gaussian random variables. }

\section{Proof of Corollary \ref{prp:hazard:conc:ineg}}
Let $Z=g(E_1,\ldots,E_n)=\left(U \circ \exp\right)\left(\sum_{i=1}^k E_i/i\right)$. Then, 

\begin{displaymath}
  \vert \partial_i g \vert \leq \frac{1}{i} \sup_{x} \frac{1}{h(x)}, \, \text{ for } i\geq k,  
\end{displaymath}
and 
\begin{displaymath}
  \| \nabla g\|^2 =  \sum_{i=k}^n \frac{1}{i^2}
  \frac{1}{(h \circ\ g)^2} \pt
\end{displaymath}
Let $c<1$, then for all $\lambda, 0 \leq \lambda \leq  c\,( k \inf_x h(x)) $, 
\begin{displaymath}
  \ln \EXP \mathe^{\lambda (Z-\EXP Z)} \leq \frac{4/k\left(1+1/k\right) \EXP [1/h(Z)^2]\lambda^2 }{2(1-c) } \, . 
\end{displaymath}
Now, start from the first statement in Theorem \ref{bernstein:expo},
\begin{eqnarray*}
%  \label{eq:5}
    \operatorname{Ent} \left[ \mathe^{\lambda (Z-\EXP Z)}\right]  &\leq &
        \frac{2\lambda^2}{1-c} \EXP \left[ \mathe^{\lambda (Z-\EXP Z)} \| \nabla
          f\|^2\right] \\
 &=&   \frac{4\lambda^2}{2(1-c) } \frac{1}{k} \left( 1+ \frac{1}{k}\right) \EXP \left[ \frac{\mathe^{\lambda (Z-\EXP Z)}}{h(Z)^2}\right] \\
& \leq & \frac{4\lambda^2}{2(1-c) } \frac{1}{k} \left( 1+ \frac{1}{k}\right) \EXP \left[ {\mathe^{\lambda (Z-\EXP Z)}}\right] \EXP \left[ \frac{1}{h(Z)^2}\right]
\end{eqnarray*}
where the last inequality follows from Chebychev negative association inequality. Hence,  
  \begin{displaymath}
  \frac{\mathrm{d} }{\mathrm{d}\lambda} \left[ \frac 1 \lambda \ln \EXP \mathe^{\lambda (Z-\EXP Z)} \right] = \frac{\operatorname{Ent} \left[ \mathe^{\lambda (Z-\EXP Z)}\right]}{\lambda^2
      \EXP \left[ \mathe^{\lambda (Z-\EXP Z)}\right]}  \leq
    \frac{1}{2(1-c)}  \frac{4}{k} \left( 1+\frac{1}{k}\right)  \EXP \left[ \frac{1}{h(Z)^2}\right] \, .
  \end{displaymath}
This differential inequality is readily solved and leads to the corollary.

\section{Proof of Abelian Proposition \ref{sec:abel-vari-hill}}

The proof proceeds by classical arguments. In the sequel, we use the almost sure representation argument. Without loss
of generality, we assume that all the random variables live on the same probability space, and that, for any intermediate sequence $(k_n)$, $\sqrt{k_n} (Y_{(k_n+1)}-\ln (n/k_n))$ converges almost surely towards a standard Gaussian random variable. Complemented with dominated convergence arguments, the next lemma will be the key element of the proof.

\begin{lem}
\label{lem:rvplus}
Let $\eta \in \emph{\textsf{RV}}_\rho, \rho \leq 0$ and $Y_{(k_n+1)}$ be the $(k_n+1)$th largest order statistic of a standard exponential sample, then, for any intermediate sequence $(k_n)$ and $u>0$,
\begin{displaymath}
\lim_{n \to \infty} \frac{ \eta(\mathe^{u+Y_{(k_n+1)}})}{\eta (n/k_n)} = \mathe ^{\rho u} \ \text{p.s} \pt
\end{displaymath}
\end{lem}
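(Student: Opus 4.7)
My plan is to combine the almost sure representation of the intermediate order statistics (stated just above the lemma) with the uniform convergence theorem for regularly varying functions; the random argument of $\eta$ is only a small multiplicative perturbation of the deterministic scale $n/k_n$, and regular variation is robust under such perturbations.

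First I would use the almost sure representation: on the chosen probability space, $\sqrt{k_n}(Y_{(k_n+1)} - \ln(n/k_n))$ converges almost surely to a standard Gaussian random variable. In particular, since $(k_n)$ is intermediate,
\[
\xi_n := \exp\bigl(Y_{(k_n+1)} - \ln(n/k_n)\bigr) \longrightarrow 1 \qquad \text{almost surely.}
\]
Rewriting the quantity of interest,
\[
\frac{\eta(\mathe^{u+Y_{(k_n+1)}})}{\eta(n/k_n)}
= \frac{\eta\bigl((n/k_n)\cdot \mathe^u\,\xi_n\bigr)}{\eta(n/k_n)}.
\]

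Next I would invoke the uniform convergence theorem for regularly varying functions \citep[Theorem 1.5.2]{BiGoTe87}: since $\eta \in \textsf{RV}_\rho$, for every compact $K \subset (0,\infty)$,
\[
\sup_{\lambda \in K} \left| \frac{\eta(t\lambda)}{\eta(t)} - \lambda^\rho \right| \longrightarrow 0 \quad \text{as } t \to \infty.
\]
Work on the probability-one event $\Omega_0$ on which $\xi_n \to 1$. On $\Omega_0$, for $n$ large, $\mathe^u \xi_n$ lies in the fixed compact set $K_u := [\mathe^u/2, 2\mathe^u]$, while $n/k_n \to \infty$. Applying the uniform convergence theorem with $t=n/k_n$ and $\lambda = \mathe^u \xi_n \in K_u$ yields
\[
\left| \frac{\eta((n/k_n)\cdot \mathe^u \xi_n)}{\eta(n/k_n)} - (\mathe^u \xi_n)^\rho \right| \longrightarrow 0,
\]
and since $(\mathe^u \xi_n)^\rho \to \mathe^{\rho u}$ by continuity and $\xi_n \to 1$, the claim follows.

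The only subtlety is handling the randomness of the perturbation $\xi_n$ together with the asymptotic behaviour of $\eta$: a pointwise (non-uniform) invocation of $\eta(tx)/\eta(t)\to x^\rho$ would not obviously cover the case where $x$ is replaced by a random sequence $\mathe^u\xi_n$, and this is exactly what the uniform convergence theorem fixes. Once one realises that $\mathe^u \xi_n$ is eventually trapped in a deterministic compact set, the argument is essentially immediate.
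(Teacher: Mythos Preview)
Your proof is correct and follows essentially the same route as the paper: rewrite the argument of $\eta$ as $(n/k_n)\,\mathe^{u+Y_{(k_n+1)}-\ln(n/k_n)}$, use the almost sure convergence $Y_{(k_n+1)}-\ln(n/k_n)\to 0$, and conclude by the locally uniform convergence $\eta(tx)/\eta(t)\to x^\rho$. The paper's argument is just a more compressed version of yours, and your explicit discussion of why the uniform (rather than pointwise) convergence is needed is a welcome clarification.
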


\begin{proof}
Note that
\begin{eqnarray*}
\frac{ \eta  (\mathe^{u+Y_{(k_n+1)}})}{\eta (n/k_n)} &= &\frac{ \eta\Big((n/k_n)\mathe^{u+Y_{(k_n+1)}-\log(n/k_n)}\Big)}{\eta(n/k_n)}  \, . 
\end{eqnarray*}
Then, the result follows since $Y_{(k_n+1)} - \log(n/k_n)\overset{\text{p.s}} {\longrightarrow} 0$ and the convergence $\eta(tx)/ \eta (t) \rightarrow x^\rho$ is locally uniform on $(0,\infty)$. 
\end{proof}

In order to secure dominated convergence arguments, we will use Drees's improvement of Potter's inequality \citep[see][page 369]{dehaanferreira2006}. For every $\epsilon,\delta>0$, there exists $t_0=t_0(\epsilon,\delta)$ such that, for $t,tx\geq t_0$,
  \begin{equation}
  \label{potter:drees}
\left\vert\eta(tx)/\eta(t)-x^\rho\right\vert \leq x^\rho \epsilon\max(x^\delta,x^{-\delta})\, .
\end{equation}

To prove Proposition \ref{sec:abel-vari-hill}, we start from Representation \eqref{rep:hill:var}: 
\begin{displaymath}
\widehat{\gamma}(k_n) = \frac{1}{k_n} \sum_{i=1}^{k_n} \int_0^{E_i} \left( \gamma + \eta \left( \mathe^{u+Y_{(k_n+1)} }\right)\right) \mathrm{d}u \pt
\end{displaymath} 
By the Pythagorean relation, 
\begin{displaymath}
  \var(\widehat{\gamma}(k_n))  =  \var\left( \EXP[\widehat{\gamma}(k_n) \mid Y_{(k_n+1)}]\right)+\EXP \left[ \var\left( \widehat{\gamma}(k_n) \mid Y_{(k_n+1)}\right)\right] \, , 
\end{displaymath}
so that 
\begin{eqnarray*}
\lefteqn{  \frac{k_n\var(\widehat{\gamma}(k_n)) -\gamma^2}{\eta \left(
      n/k_n\right)}}
\\   &= &\frac{k_n \var\left(
      \EXP[\widehat{\gamma}(k_n) \mid Y_{(k_n+1)}]\right)}{\eta(n/k_n) 
  }+  k_n \EXP \left[ \frac{\var\left( \widehat{\gamma}(k_n) \mid Y_{(k_n+1)}\right)-\frac{\gamma^2}{k_n}}{\eta(n/k_n) } \right] \, . 
\end{eqnarray*}
The second summand can be further decomposed using \eqref{rep:hill:var}.
\begin{eqnarray*}
  \lefteqn{  \frac{k_n\var(\widehat{\gamma}(k_n)) -\gamma^2}{ \eta \left(
      n/k_n\right) }}\\
&= & \underbrace{\frac{k_n\var\left( \EXP[\widehat{\gamma}(k_n) \mid
    Y_{(k_n+1)}]\right)}{\eta(n/k_n)}}_{\textsc{(i)}} \\
&& +\underbrace{\eta 
  (\tfrac{n}{k_n})\EXP \left[\var \left[ \int_0^E
     \frac{\eta(\mathe^{u+Y_{(k_n+1)}})}{ \eta (n/k_n) }\mathd u
\mid Y_{(k_n+1)}   \right]\right]}_{(\textsc{ii})}\\
&&  + \underbrace{2\gamma \EXP \left[\cov \left[
     E,\int_0^E \frac{\eta(\mathe^{u+Y_{(k_n+1)}})}{\eta (n/k_n)}\mathd u
   \mid Y_{(k_n+1)}\right]\right]}_{(\textsc{iii})}  . 
\end{eqnarray*}
We check that \textsc{(i)} and \textsc{(ii)} tend to $0$ and then that
\textsc{(iii)} converges towards a finite limit. 

Fix $\epsilon,\delta>0$ and define $M=\sup\{ \eta(t), t \leq t_0 \}$.\\
Let $A_{n}$ denote the event $\{Y_{(k_n+1)}>\ln
t_0(\epsilon,\delta)\}$. For $n$ such that $ \ln (n/k_n) \leq 2 \ln t_0$, as
$Y_{(k_n+1)}$ is sub-gamma with variance factor $1/k_n$,
\begin{displaymath}
\mathbb{P}\{ A^c_n\} \leq \exp\left(-
  k_n(\ln (n/k_n))^2/8\right) \, .
\end{displaymath}

We first check that \textsc{(ii)} tends to
$0$. Let $n$ be such that
  $n/k_n\geq t_0$ and $W_n$ denote the random variable $ Y_{(k_n+1)} -\ln
  \left(n/k_{n}\right)$. Note that, for $0\leq \lambda\leq k_n/2$,
  \begin{displaymath}
    \EXP \mathe^{\lambda |W_n|} \leq 2 \mathe^{\frac{\lambda^2}{k_n}} \, . 
  \end{displaymath}

Using Jensen's inequality and Fubini's Theorem,
\begin{eqnarray*}
\lefteqn{\EXP \left[ \var \left[ \int_0^E
     \frac{\eta(\mathe^{u+Y_{(k_n+1)}})}{\eta (n/k_n)}\mathd u  \mid Y_{(k_n+1)}
   \right]\right]}\\ 
& \leq & \EXP \left[\EXP \left[ E \int_0^E \left(
     \frac{\eta(\mathe^{u+Y_{(k_n+1)}})}{\eta (n/k_n)}\right)^2\mathd u \mid Y_{(k_n+1)}
   \right]\right] \\
& = & \int_0^\infty \mathe^{-v}  v \int_0^v  \EXP \left[\left(
     \frac{\eta(\mathe^{u+Y_{(k_n+1)}})}{\eta (n/k_n)}\right)^2
 \right] \mathd
   u \mathrm{d}v \\
& = & \int_0^\infty \mathe^{-v}  v \int_0^v  \EXP \left[\left(
     \frac{\eta(\mathe^{u+W_n}n/k_n)}{\eta (n/k_n)}\right)^2
 \right] \mathd
   u \mathrm{d}v 
\end{eqnarray*}
We now apply Potter's inequality (\ref{potter:drees}) on the event $A_n$ with $t=n/k_n>t_0$ and $tx=\mathe^{u+Y_{(k_n+1)}}>t_0, u>0$ : 
\begin{eqnarray*}
  \lefteqn{\EXP \left[ \var \left[ \int_0^E
     \frac{\eta(\mathe^{u+Y_{(k_n+1)}})}{\eta (n/k_n)}\mathd u  \mid Y_{(k_n+1)}
   \right]\right]} \\
& \leq  & \int_0^\infty \mathe^{-v}  v \int_0^v  \EXP \left[  \mathbb{1}_{A_{n}}
     \mathe^{2\rho (u+W_n)}
  \left(  1+\epsilon  \mathe^{\delta (u+|W_n|)} \right)^2 + \mathbb{1}_{A_n^c} \frac{M^2}{\eta(n/k_n)^2}
 \right] \mathd
   u \mathrm{d}v \\
& \leq & \int_0^\infty \mathe^{-v}  v \int_0^v  \EXP \left[  
     \mathe^{2\rho W_n}
  2 \left( 1+\epsilon^2  \mathe^{2\delta (u+|W_n|)} \right)
 \right] \mathd
   u \mathrm{d}v+ \frac{2M^2}{\eta(n/k_n)^2}\EXP \mathbb{1}_{A_n^c} \,
   . 
\end{eqnarray*}
The first summand has a finite limit thanks to Lemma \ref{lem:rvplus}. The second summand converges to
$0$ as $\EXP \mathbb{1}_{A_n^c}$ tends to $0$  exponentially fast
while $1/\eta(n/k_n)^2$ tends to infinity algebraically fast.

Bounds on \textsc{(i)} are easily obtained, using Jensen's Inequality and Poincar\'e Inequality. 
\begin{eqnarray*}
\frac{k_n \var\left( \EXP[\widehat{\gamma}(k_n) \mid
    Y_{(k_n+1)}]\right)}{\eta(n/k_n) } 
&= &  \frac{k_n \var\left( \int_0^\infty {\eta \left( \mathe^{u+Y_{(k_n+1)}} \right)} \mathe^{-u} \mathd u\right)}{\eta(n/k_n) } 
\\
&\le& 4 \eta(n/k_n) \esp\left[ \left( \int_0^\infty \frac{\eta \left(
        \mathe^{u+Y_{(k_n+1)}} \right)}{\eta(n/k_n)} \mathe^{-u}
    \mathd u \right)^2\right] \\
&\leq & 
4 \eta(n/k_n) \esp\left[ \int_0^\infty \left( \frac{\eta \left(
        \mathe^{u+Y_{(k_n+1)}} \right)}{\eta(n/k_n)}\right)^2
  \mathe^{-u} \mathd u \right] \, . %\\
%&\underset{n\to+\infty}{\longrightarrow}&  0 
\end{eqnarray*}
Using the line of arguments as for handling the limit of \textsc{(ii)},
we establish that \textsc{(i)} converges to $0$.

We now check that  \textsc{(iii)} converges towards a finite
limit. Note that 
\begin{eqnarray*}
 \lefteqn{\EXP \left[\cov \left[
     E,\int_0^E \frac{\eta(\mathe^{u+Y_{(k_n+1)}})}{\eta (n/k_n)}\mathd u
   \mid Y_{(k_n+1)}\right]\right] }\\
& = &  \EXP  \left[
     (E-1)\int_0^E \frac{\eta(\mathe^{u+Y_{(k_n+1)}})}{\eta (n/k_n)}\mathd u
   \right] \, . 
\end{eqnarray*}
By Lemma \ref{lem:rvplus}, for almost every $u>0$, 
$$(E-1) \frac{\eta(\mathe^{u+W_n} n/k_n)}{\eta (n/k_n)} \underset{n \to \infty}{\longrightarrow} (E-1) \mathe^{\rho u}\, , $$ 
and
\begin{eqnarray*}
 \lefteqn{\lvert E-1 \rvert  \int_0^E \left |\frac{\eta(\mathe^{u+W_n} n/k_n)}{\eta (n/k_n)}\right | \mathd u }\\
 &\le & | E-1|  \int_0^E
   \mathe^{\rho (u+W_n)} \left(1+ \epsilon \mathe^{\delta (u+|W_n|)}\right) \mathd u +\mathbb{1}_{A_n^c} E |E-1| \frac{M}{|\eta(n/k_n)|}\pt 
\end{eqnarray*}
   The first term is finite as the integral of a continuous function on a compact. \\
   Thus, 
$$(E-1) \int_0^E \frac{\eta(\mathe^{u+W_n} n/k_n)}{\eta (n/k_n)}\mathd
u \rightarrow_n (E-1) \int_0^E \mathe^{\rho u} \mathd
u= (E-1)\frac{\mathe^{\rho E} -1}{\rho} \, .$$ 
The expected value of the last random variable is
$1/(1-\rho)^2$. 

 We check that, for sufficiently large $n$, 
\begin{eqnarray*}
\lefteqn{\esp\left[ |E-1| \int_0^E \frac{|\eta(\mathe^{u+W_n} n/k_n)|}{|\eta (n/k_n)|}\mathd u  \right]}\\
 &\leq  & \esp \left[ | E-1|  \int_0^E
   \mathe^{\rho (u+W_n)} \left(1+ \epsilon
     \mathe^{\delta (u+|W_n|)}\right) + \mathbb{1}_{A_n^c} |E-1| \frac{M}{|\eta(n/k_n)|} \mathd u \right] \\
 &\leq & \EXP \left[\mathe^{\rho W_n} \left( 2 + \frac{\epsilon}{\delta(1-\delta)^2} \mathe^{\delta|W_n|} \right)\right]
 +\frac{M}{|\eta(n/k_n)|}  \EXP \mathbb{1}_{A_n^c}  \\
&\leq &  4 \mathe^{\frac{\rho^2}{k_n}} +  \frac{2 \epsilon}{\delta(1-\delta)^2} \mathe^{\frac{(\delta-\rho)^2}{k_n}} + \frac{M}{|\eta(n/k_n)|}  \EXP \mathbb{1}_{A_n^c}  \pt
%+ \frac{M \exp\left(-
  %k_n(\ln (n/k_n))^2/8\right) }{|\eta(n/k_n)|}  \, . 
\end{eqnarray*} 
We now way conclude by dominated convergence that 
$$ \textsc{(iii)} \underset{n \to \infty}{\longrightarrow} \frac{2\gamma}{(1-\rho)^2} \pt $$

\section{Proof of Proposition \ref{prop:right:tail:order:stat}}
\label{proof:prop:right:tail:order:stat}

The proof of Proposition 4.3 from \citep{boucheronthomas2012} 
yields that, with probability larger than $1-\delta$, for $0\le z$, 
\begin{displaymath}
\mathbb{P} \left\{ \exp \left(Y_{(k+1)} \right)\ge  \frac{n}{k}\mathe^{- z} \right \} \ge 1- \exp \left( - 2 k \sinh\left( {z}/{2} \right)^2 \right) \, .
\end{displaymath}
We may choose  $z=2 \arsinh(\sqrt{{\ln\left(1/\delta \right)}/{2k}})$ and notice  that $\arsinh(x)= \ln (x +\sqrt{1+x^2})$. This yields
\begin{displaymath} 
\mathe^{z} \le  1+ 2 \frac{\ln\left( 1/\delta  \right)}{k} + \sqrt{\frac{2 \ln(1/\delta)}{k}}
\quad \text{ and } \quad  \exp \left( - 2 k \sinh\left( {z}/{2} \right)^2 \right)= \delta \, . 
\end{displaymath}

% \section{Proof of Inequality \ref{eq:6}}
% \label{proof:eq:6}

% For $k\geq 5$,
% \begin{equation*}
% \esp
% \left[\left(1-\mathe^{Y'_{(k+1)}-Y_{(k+1)}}\right)^2\right]  
% \le  {\frac{4}{k}} \frac{1}{1-4/k}  \pt 
% \end{equation*}
% As $Y'_{(k+1)}-Y_{(k+1)}$ is symmetric, $\esp[Y'_{(k+1)}-Y_{(k+1)}]=0$, and applying Jensen's inequality and elementary calculus leads to 
% \begin{eqnarray*}
% \lefteqn{\esp
% \left[\left(1-\mathe^{Y'_{(k+1)}-Y_{(k+1)}}\right)^2\right]}  \\
% &  =  & 
%  \esp \left[ \mathe^{2(Y'_{(k+1)}-Y_{(k+1)})} - 1 \right] -  2 \esp \left[ \mathe^{(Y'_{(k+1)}-Y_{(k+1)})} - 1 \right] \\
%  & \leq &   \esp \left[ \mathe^{2(Y'_{(k+1)}-Y_{(k+1)})} - 1 \right]  \\
%  & =  & \prod_{i=k+1}^n \frac{1}{1-4 i^{-2}} -1  \\
%  & \leq &  \frac{1}{1-4 \sum_{i=k+1}^ni^{-2}} -1 \\
%  & \leq &   \frac{1}{1-4/k} -1 \\
%  & \leq & \frac{4}{k} \frac{1}{1-4/k}  \, . 
% \end{eqnarray*}

\section{Revisiting the lower bound on adaptive estimation error}\label{proof:lower:bound}

Lower bounds on tail index estimation error \citep{MR1833966,drees1998optimal, Nov14, carpentierkim2014}
are usually constructed by defining sequences of  local models around a pure Pareto distribution with shape parameter $\gamma_0$.
 When deriving lower bounds for the estimation error under constraints like $\overline{\eta}$ is regularly varying, the elements of the local model for sample size $n$  may be defined by
\begin{displaymath}
  U_{n,h}(t) =  t^{\gamma_0+d_nh(0)} \exp \int_1^t d_n\frac{h(c_n/s)-h(0)}{s} \mathrm{d}s  
\end{displaymath}
where $h$ is square integrable over $[0,1]$, $d_n \rightarrow 0$, $nd_n^2/c_n \rightarrow 1$ \citep{MR1833966}. 
The sequences $d_n$ and $c_n$ are chosen in such a way that $d_n \left| h(c_n/s)-h(0)\right|= | \overline{\eta}(s)|$ satisfies the required constraint.
If the local alternatives are Pareto change point distributions as in \citep{Nov14} and \citep{carpentierkim2014},
$h(x)=\mathbb{1}_{\{ x \leq 1\}}$, $c_n= \tau_n^{1/\gamma_0}$.  \citet{MR1833966} explores a richer collection of local alternatives in order to fit into the theory of weak convergence of local experiments. 

In order to explore adaptivity as in \citep{carpentierkim2014}, it is necessary to handle simultaneously a collection of sequences $(d_n,c_n)_n$ corresponding to different rates of decay of the von Mises function. The difficulty of estimation is connected with the difficulty of distinguishing  alternatives with different tail indices that is, with the hardness of a multiple hypotheses testing problem. In order to lower bound the 
testing error, \citeauthor{carpentierkim2014} chose to use Fano's Lemma \citep[see]{CoTh91}. Using Fano's Lemma requires bounding 
the Kullback-Leibler divergence between the different local alternatives which is not as easy as bounding the divergence between a Pareto change point distribution and a pure Pareto distribution.

The next lemma is from \citep{birge:2004}. It can be used in the
derivation of risk lower bounds instead of the classical  Fano
Lemma. Just as Fano's Lemma, it states a lower bound on the error in
multiple hypothesis testing. However, as it only requires computing 
the Kullback-Leibler divergence to the localisation center, in the present setting, 
it significantly alleviates computations and makes the proof more concise and more transparent.

\begin{lem}(Birg\'e-Fano)
 \label{lem-fano}
Let  $P_0,\ldots,P_M$ be a collection of probability distributions on
some space, and let 
$A_0,\ldots,A_M$ be a collection of pairwise disjoint events, then the following holds
\begin{displaymath}
  \min_i P_i\{A_i\}  \leq \frac{2\mathe }{1+2\mathe} \vee \frac{\frac{1}{M}
    \sum_{i=1}^M \mathcal{K} (P_i, P_0)}{\ln (M+1)}  \, . 
\end{displaymath}
\end{lem}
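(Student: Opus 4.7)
The plan is to establish the inequality by reducing the multiple-hypothesis problem to a collection of binary testing problems, exploiting only that the $A_i$ are pairwise disjoint and that $P_0$ plays the role of a single reference measure. Let $a=\min_{0\leq i\leq M} P_i\{A_i\}$ and write $\bar K = M^{-1}\sum_{i=1}^M \mathcal{K}(P_i, P_0)$. If $a \leq 2\mathe/(1+2\mathe)$ the first term of the maximum already dominates, so I can assume $a > 2\mathe/(1+2\mathe)$ and focus on proving that $a \leq \bar K/\ln(M+1)$.

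First, for each $i \in \{1,\ldots,M\}$, I apply the data-processing inequality to the binary partition $(A_i, A_i^c)$. This gives
\[
\mathcal{K}(P_i, P_0) \;\geq\; \mathrm{kl}\bigl(P_i(A_i),\, P_0(A_i)\bigr),
\]
where $\mathrm{kl}(p,q) = p\ln(p/q) + (1-p)\ln((1-p)/(1-q))$ is the Bernoulli relative entropy. Since $\mathrm{kl}(\cdot, q)$ is non-decreasing on $[q,1]$, and since pairwise disjointness forces $P_0(A_i) \leq 1 - P_0(A_0) \leq 1 - a$, hence $P_0(A_i) \leq a$ under the regime $a > 2\mathe/(1+2\mathe) > 1/2$, I can replace the first argument by $a$ to obtain $\mathcal{K}(P_i, P_0) \geq \mathrm{kl}(a, P_0(A_i))$.

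The second step is to average over $i$ and exploit $\sum_{i=1}^M P_0(A_i) \leq 1 - P_0(A_0) \leq 1 - a$. Writing out $\mathrm{kl}(a, P_0(A_i))$ and applying Jensen's inequality (concavity of the logarithm) to the term $-M^{-1}\sum_{i=1}^M \ln P_0(A_i)$, I will get
\[
-\frac{1}{M}\sum_{i=1}^M \ln P_0(A_i) \;\geq\; \ln\!\frac{M}{1-a} \;\geq\; \ln(M+1),
\]
where the last inequality is equivalent to $a \geq 1/(M+1)$, which is comfortably implied by $a > 2\mathe/(1+2\mathe)$. Combining yields
\[
\bar K \;\geq\; a\ln(M+1) \;+\; a\ln a + (1-a)\ln(1-a) + \text{(a non-negative residual)},
\]
so the remaining task is to verify that the entropy-type correction $a\ln a + (1-a)\ln(1-a)$ is absorbed by the residual precisely when $a \geq 2\mathe/(1+2\mathe)$.

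The main obstacle is exactly this last calibration of the constant $2\mathe/(1+2\mathe)$: one must carefully balance the discarded non-negative piece $-(1-a)M^{-1}\sum_i \ln(1-P_0(A_i))$ against the negative binary negentropy $a\ln a + (1-a)\ln(1-a)$. This is a one-variable convex-analytic inequality that pins down the threshold and makes the statement sharp; once it is verified the conclusion $a \leq \bar K/\ln(M+1)$ is immediate, completing the proof of the Birgé--Fano lemma.
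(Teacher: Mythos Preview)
The paper does not prove this lemma; it is quoted from \citet{birge:2004} and used as a black box in the proof of Theorem~\ref{thm-kc-lower-bound:appendix}. So there is no ``paper's own proof'' to compare against.

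Your outline is in fact Birg\'e's original argument: data-processing on the partition $(A_i,A_i^c)$ to get $\mathcal{K}(P_i,P_0)\ge \mathrm{kl}\bigl(P_i(A_i),P_0(A_i)\bigr)$, the monotonicity reduction to $\mathrm{kl}(a,q_i)$ (valid because $q_i\le 1-a<a$ once $a>1/2$), then averaging, Jensen on $-\tfrac1M\sum_i\ln q_i$, and the pigeonhole constraint $\sum_{i\ge 1} q_i\le 1-a$. The step you flag as the remaining obstacle does go through with room to spare: using convexity of $q\mapsto\mathrm{kl}(a,q)$ one may replace the average by $\mathrm{kl}\bigl(a,(1-a)/M\bigr)$ directly, and checking $\mathrm{kl}\bigl(a,(1-a)/M\bigr)\ge a\ln(M+1)$ for all $M\ge 1$ reduces in the worst regime to the one-variable inequality
\[
a\ln\!\frac{a}{1-a}+(1-a)\ln(1-a)\;\ge\;0,
\]
which at $a=2\mathe/(1+2\mathe)$ equals $\bigl(2\mathe(1+\ln 2)-\ln(1+2\mathe)\bigr)/(1+2\mathe)>0$ and is increasing on $[1/2,1)$. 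So the threshold $2\mathe/(1+2\mathe)$ is comfortably sufficient (it is not sharp for this route, but that is all the lemma claims). In short, your sketch is correct and is exactly the standard proof; only the final elementary calculus check was left implicit.
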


In order to take advantage of Lemma \ref{lem-fano}, we use the
  Bayesian game designed in \citep{carpentierkim2014}. 

\begin{thm}\label{thm-kc-lower-bound:appendix}
Let $\gamma >0$, $\rho <-1$, and $0  \le v \le \mathe/(1+2\mathe)$. Then, for any tail index estimator $\widehat{\gamma}$ and any sample size $n$ such that $M=\lfloor \ln n\rfloor> \mathe^{1/v}$, there exists a collection $(P_i)_{i\leq M}$ of probability distributions such that
\begin{enumerate}[i)]
\item $P_i \in \emph{\textsf{MDA}}(\gamma_i)$ with $\gamma_i>\gamma$,
\item $P_i$ meets the von Mises condition with von Mises function $\eta_i$ satisfying
\begin{displaymath}
  \overline{\eta}_i(t) \leq \gamma t^{\rho_i}
\end{displaymath}
where $\rho_i =\rho +i/M<0$,
\item \begin{displaymath}
  \max_{i\leq M} P^{\otimes n}_i \left\{
    |\widehat{\gamma}-\gamma_i|\geq \frac{C_\rho}{4} \gamma_i
    \left(\frac{v\ln\ln n}{n}\right)^{|\rho_i|/(1+2|\rho_i|)} \right\} \geq \frac{1}{1+2\mathe} 
\end{displaymath}
and 
\begin{displaymath}
\max_{i\leq M}  \EXP_{P^{\otimes n}_i} \left[ \frac{|\widehat{\gamma}-\gamma_i|}{\gamma_i} \right] \geq \frac{C_\rho}{4(1+2\mathe)}  \left(\frac{v\ln\ln n}{n}\right)^{|\rho|/(1+2|\rho|)} \, , 
\end{displaymath}
with $C_\rho=1-\exp\left(-\tfrac{1}{2(1+2|\rho|)^2}\right)$. 
\end{enumerate}
\end{thm}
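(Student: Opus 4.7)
The plan is to implement the Bayesian game construction of \citet{carpentierkim2014}, but to invoke Birg\'e's Lemma \ref{lem-fano} in place of Fano's inequality. The point of Birg\'e's Lemma is that it only requires Kullback-Leibler divergences to a single reference measure $P_0$ rather than pairwise divergences between the alternatives, which avoids tedious computations between change-point distributions with different change points and different tail indices.

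\textbf{Hypotheses.} Let $P_0$ be the pure Pareto distribution with tail index $\gamma$ (so that $\eta_0\equiv 0$) and take, for $i=1,\ldots,M$, $P_i$ to be the Pareto change-point distribution
\[
\overline{F}_i(x) \;=\; x^{-1/\gamma}\,\mathbb{1}_{\{1\leq x<\tau_i\}} \;+\; \tau_i^{-1/\gamma}\bigl(x/\tau_i\bigr)^{-1/\gamma_i}\,\mathbb{1}_{\{x\geq\tau_i\}},
\]
with $\gamma_i=\gamma+\Delta_i$ and $\rho_i=\rho+i/M$. Writing $\epsilon_i^\star = (v\ln\ln n/n)^{|\rho_i|/(1+2|\rho_i|)}$, tune the scale $\tau_i$ and the shift $\Delta_i$ so that (a) the von Mises function of $P_i$ satisfies $\overline\eta_i(t)\leq\gamma t^{\rho_i}$, (b) $\Delta_i$ matches the claimed minimax rate $\frac{C_\rho}{2}\gamma_i\epsilon_i^\star$, and (c) the sequence $(\gamma_i)$ is arranged as a strictly increasing sequence with spacing at least $\frac{C_\rho}{2}\gamma_i\epsilon_i^\star$. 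The last condition guarantees that the events
\[
A_i \;=\; \Bigl\{|\widehat\gamma-\gamma_i|<\tfrac{C_\rho}{4}\gamma_i\epsilon_i^\star\Bigr\}, \qquad i=1,\ldots,M,
\]
are pairwise disjoint by the triangle inequality.

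\textbf{Divergence bound.} Since $P_i$ and $P_0$ coincide strictly below $\tau_i$, the Kullback-Leibler divergence localises to the tail: a direct computation with the explicit densities gives
\[
\mathcal{K}(P_i,P_0) \;\lesssim\; \overline{F}_0(\tau_i)\cdot (\Delta_i/\gamma)^2.
\]
Calibrating $\tau_i$ at the scale $n/k_{n,i}^\star$, where $k_{n,i}^\star\asymp n^{2|\rho_i|/(1+2|\rho_i|)}(v\ln\ln n)^{1/(1+2|\rho_i|)}$ is the oracle sample fraction for $\rho_i$, one obtains $\overline F_0(\tau_i)\asymp k_{n,i}^\star/n$ and $(\Delta_i/\gamma)^2\asymp(\epsilon_i^\star)^2\asymp 1/k_{n,i}^\star$. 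Therefore
\[
n\,\mathcal{K}(P_i,P_0)\;\leq\; v\ln\ln n\;\leq\;v\ln(M+1),
\]
using $M=\lfloor\ln n\rfloor$ and $M>\mathe^{1/v}$.

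\textbf{Applying Birg\'e's lemma.} Lemma \ref{lem-fano} applied to the product measures $P_i^{\otimes n}$ and the disjoint events $A_i$ gives
\[
\min_{i\leq M} P_i^{\otimes n}(A_i)\;\leq\;\frac{2\mathe}{1+2\mathe}\vee\frac{v\ln\ln n}{\ln(M+1)}\;\leq\;\frac{2\mathe}{1+2\mathe},
\]
since the second term is bounded by $v\leq\mathe/(1+2\mathe)$. Hence there exists $i^\star$ with $P_{i^\star}^{\otimes n}(A_{i^\star}^c)\geq 1/(1+2\mathe)$, which is the probability statement in iii). The expectation statement follows by Markov's inequality applied on $A_{i^\star}^c$.

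\textbf{Main difficulty.} The delicate step is the joint calibration of the triple $(\tau_i,\Delta_i,\gamma_i)$: the spacing of the $\gamma_i$ must exceed the separation needed for disjointness of the $A_i$, the von Mises function must satisfy $\overline\eta_i(t)\leq\gamma t^{\rho_i}$ with the prescribed exponent, and simultaneously $n\mathcal{K}(P_i,P_0)$ must not exceed $v\ln(M+1)$. Because both $\epsilon_i^\star$ and the exponent $|\rho_i|/(1+2|\rho_i|)$ depend on $i$, each alternative must be calibrated separately. The benefit of Birg\'e's Lemma over Fano's is precisely that only the $M$ divergences to the single pure Pareto center $P_0$ need to be computed, instead of the $O(M^2)$ pairwise divergences between change-point distributions with distinct thresholds.
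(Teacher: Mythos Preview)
Your proposal is correct and follows essentially the same route as the paper: Pareto change-point alternatives $P_1,\ldots,P_M$ around the pure Pareto center $P_0$, second-order parameters $\rho_i=\rho+i/M$, calibration of $(\tau_i,\gamma_i)$ so that all $n\,\mathcal{K}(P_i,P_0)\le v\ln M$, and then Birg\'e's Lemma in place of Fano's. The only cosmetic difference is that the paper takes $A_i$ to be the nearest-neighbour (Voronoi) region for $\gamma_i$ among $\{\gamma_1,\ldots,\gamma_M\}$, which is automatically disjoint, and afterwards lower-bounds $\min_{j\ne i}|\gamma_j-\gamma_i|$ by $\tfrac{C_\rho}{2}\gamma_i\,(n/(v\ln M))^{\rho_i/(1+2|\rho_i|)}$ using the explicit choice $\rho_i=\rho+i/M$; note that the constant $C_\rho$ emerges from that spacing computation rather than from the definition of $\Delta_i$ (the paper sets simply $\Delta_i=\gamma\,(n/(v\ln M))^{\rho_i/(1+2|\rho_i|)}$, so your item~(b) slightly misplaces where $C_\rho$ enters).
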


  \begin{proof}[Proof of Theorem \ref{thm-kc-lower-bound:appendix}]
    Choose $v$ so that $0\leq v\leq 2\mathe/(1+2\mathe)$. The number
    of alternative hypotheses $M$ is chosen in such a way that $ M/2 \leq \ln\left(
    n/(v\ln M)\right) \leq M$.  If  $\lfloor \ln n \rfloor
    \geq \mathe^{1/v}$, $M= \lfloor \ln n \rfloor$ will do.

    The center of localisation $P_0$ is the pure Pareto distribution
    with shape parameter $\gamma>0$ ($P_0\{(\tau,\infty)\} =
    \tau^{-1/\gamma}$). The local alternatives $P_1, \ldots, P_M$ are
    Pareto change point distributions.  Each $P_i$ is defined by a
    breakpoint $\tau_i>1$ and an ultimate Pareto index $\gamma_i$. 
If $F_i$ denotes the distribution function of $P_i$,
    \begin{displaymath}
      \overline{F}_i(x) =  x^{-1/\gamma} \mathbb{1}_{\{1\leq x \leq \tau_i\}} +
      \tau_i^{-1/\gamma} (x/\tau_i)^{-1/\gamma_i} \mathbb{1}_{\{x \geq \tau_i\}}  \,
      . 
    \end{displaymath}
    Karamata's representation of $(1/\overline{F}_i)^\leftarrow$ is
    \begin{displaymath}
      U_i(t) =  t^{\gamma_i}  \exp \left(\int_1^t \frac{\eta_i(s)}{s}\right) \mathrm{d}s
    \end{displaymath}
    with 
    \begin{math}
      \eta_i(s) = (\gamma-\gamma_i)\mathbb{1}_{\{s \le \tau_i^{1/\gamma}\}}\, . 
      % \begin{cases}
      %   0 & \text{ for }  s \geq \tau_i^{1/\gamma} \\
      %   \gamma-\gamma_i & \text{ for } s \leq \tau_i^{1/\gamma} \, .
      % \end{cases}
    \end{math}
    
    The Kullback-Leibler divergence between $P_i$ and $P_0$ is readily
    calculated,
    \begin{displaymath}
      \mathcal{K} (P_i,P_0) = \overline{F}_i(\tau_i)  \left(
        \frac{\gamma_i}{\gamma} -1 -\ln  \frac{\gamma_i}{\gamma}\right) =
      \tau_i^{-1/\gamma}  \left(
        \frac{\gamma_i}{\gamma} -1 -\ln  \frac{\gamma_i}{\gamma}\right) 
      \, . 
    \end{displaymath}
    If $\gamma_i>\gamma$, the next upper bound holds,
    \begin{displaymath}
      \mathcal{K} (P_i,P_0) \leq \frac{\tau_i^{-1/\gamma} }{2} \left(
        \frac{\gamma_i}{\gamma} -1 \right)^2  \, . 
    \end{displaymath}
    The breakpoints and tail indices are chosen in such a way that all
    upper bounds are equal (namely
    $n\tau_i^{-1/\gamma}(\gamma_i/\gamma-1)^2 $ does not depend on $i$),
    \begin{eqnarray*}
      \tau_i &= &
      \left( n/(v\ln M)\right)^{\gamma/(1+2|\rho_i|)} \\ 
      \gamma_i&=&\gamma +\gamma
      \left(n/(v \ln M ) \right)^{\rho_i/(1+2|\rho_i|)} \, , 
    \end{eqnarray*}
    so that $ \mathcal{K} (P_i^{\otimes n},P_0^{\otimes n}) = n
    \mathcal{K} (P_i,P_0) \leq { v\ln M}$,  for all $1\leq i\leq M$.
    
    Note that, for all $t>1$,
    \begin{displaymath}
      |\eta_i(t)|= |\gamma-\gamma_i|\mathbb{1}_{\{t\leq\tau_i^{1/\gamma}\}}\leq \gamma \tau_i^{\rho_i/\gamma}\mathbb{1}_{\{t\leq\tau_i^{1/\gamma}\}}\leq \gamma t^{\rho_i}
    \end{displaymath}
    the upper bound being achieved at $t=\tau_i^{1/\gamma} .$

 Now,    let $\widehat{\gamma}$ be any tail index estimator. Define region
    $A_i$, as the set of samples such that $\gamma_i$ minimises
    $|\widehat{\gamma}-\gamma_j|$, for $1\leq j\leq M$. Then, if the event $A_i$ is not realised,
$$|\widehat{\gamma}-\gamma_i|\geq \frac{1}{2} \min_{1\leq j\leq M,
  j\neq i} |\gamma_j-\gamma_i| \, .
$$
By Birg\'e's Lemma, 
\begin{displaymath}
\max_{i\leq M} \mathbb{P}_i^{\otimes n}\left\{ |\widehat{\gamma}-\gamma_i|\geq \frac{1}{2} \min_{1\leq j\leq M,
  j\neq i} |\gamma_j-\gamma_i|\right\} \geq \frac{1}{1+2\mathe} \, . 
\end{displaymath}
In order to make the whole construction useful, it remains to choose
the ``second-order parameters'' $\rho_i$'s (the true second-order parameter of each $P_i$ is infinite!). 
We will need an upper bound on
$\gamma_i/\gamma$ (but we already have $\gamma_i/\gamma\leq 2$), as
well as a lower bound on ${ |\gamma_j-\gamma_i|}/{\gamma}$ for $j\neq
i$ that scales like $\left({n}/{\ln \ln n}
\right)^{\rho_i/(1+2|\rho_i|)}$.

Following \citet{carpentierkim2014}, we finally choose $\rho_i$ as
$\rho_i = \rho + i/M$ for $1\leq i\leq M$.
Then, for $j < i$, using that $ M/2\le \ln(n/(v\ln M)) \le M$ and $\rho_i - \rho_j=(i-j)/M$, 
\begin{eqnarray*}
 \lefteqn{\frac{ |\gamma_j-\gamma_i|}{\gamma_i} }
 \\&\geq& \frac{ |\gamma_j-\gamma_i|}{2\gamma} \\
 & \geq & \frac{1}{2}\left(\frac{n}{v\ln M} \right)^{\rho_i/(1+2|\rho_i|)} \left| 1-  \left(\frac{n}{v\ln M} \right)^{\rho_j/(1+2|\rho_j|)-\rho_i/(1+2|\rho_i|)} \right| \\
 &\geq & \frac{1}{2}\left(\frac{n}{v\ln M} \right)^{\rho_i/(1+2|\rho_i|)} \left[ 1- \exp \left( \frac{-(i-j)}{M(1+2|\rho_i|)(1+2|\rho_j|)}\ln\left(\frac{n}{v\ln M}\right)\right)\right] \\
 &\geq& \frac{1}{2}\left(\frac{n}{v\ln M} \right)^{\rho_i/(1+2|\rho_i|)} \left[ 1- \exp \left( \frac{-(i-j)}{2(1+2|\rho_i|)(1+2|\rho_j|)}\right)\right] \\
 &\geq& \frac{C_\rho}{2}  \left(\frac{n}{v\ln M} \right)^{\rho_i/(1+2|\rho_i|)}
\end{eqnarray*}
where $C_\rho$ may be chosen as $1-\exp\left(-\tfrac{1}{2(1+2|\rho|)^2}\right)$.
\end{proof}

\end{document}